\tikzset{snake it/.style={decorate, decoration=snake}}
\theoremstyle{plain}
\newtheorem{thm}{Theorem}[section]
\newtheorem{cor}[thm]{Corollary}
\newtheorem{lem}[thm]{Lemma}
\newtheorem{prop}[thm]{Proposition}
\newtheorem{conj}[thm]{Conjecture}
\theoremstyle{definition}
\newtheorem{defn}[thm]{Definition}
\theoremstyle{remark}
\newtheorem{rmk}[thm]{Remark}
\newcommand{\BC}{{\mathbb{C}}}
\newcommand{\BF}{{\mathbb{F}}}
\newcommand{\BG}{{\mathbb{G}}}
\newcommand{\BN}{{\mathbb{N}}}
\newcommand{\BP}{{\mathbb{P}}}
\newcommand{\BQ}{{\mathbb{Q}}}
\newcommand{\BU}{{\mathbb{U}}}
\newcommand{\BZ}{{\mathbb{Z}}}
\newcommand{\CB}{{\mathcal B}}
\newcommand{\CC}{{\mathcal C}}
\newcommand{\CE}{{\mathcal E}}
\newcommand{\CF}{{\mathcal F}}
\newcommand{\CG}{{\mathcal G}}
\newcommand{\CH}{{\mathcal H}}
\newcommand{\CI}{{\mathcal I}}
\newcommand{\CJ}{{\mathcal J}}
\newcommand{\CK}{{\mathcal K}}
\newcommand{\CL}{{\mathcal L}}
\newcommand{\CN}{{\mathcal N}}
\newcommand{\CO}{{\mathcal O}}
\newcommand{\CP}{{\mathcal P}}
\newcommand{\CR}{{\mathcal R}}
\newcommand{\CT}{{\mathcal T}}
\newcommand{\Fp}{{\mathfrak{p}}}
\newcommand{\Fq}{{\mathfrak{q}}}
\newcommand{\FC}{{\mathfrak{C}}}
\newcommand{\FF}{{\mathfrak{F}}}
\newcommand{\ch}{{\mathrm{ch}}}
\newcommand{\td}{{\mathrm{td}}}
\DeclareFontFamily{OT1}{rsfs}{}
\DeclareFontShape{OT1}{rsfs}{n}{it}{<-> rsfs10}{}
\DeclareMathAlphabet{\curly}{OT1}{rsfs}{n}{it}
\newcommand\Hom{\operatorname{Hom}}
\newcommand{\Chow}{\mathrm{CH}}
\newcommand{\Corr}{\mathrm{Corr}}
\newcommand{\Coh}{\mathrm{Coh}}
\let\@wraptoccontribs\wraptoccontribs
\begin{document}
\title[Perverse filtrations and Fourier transforms]{Perverse filtrations and Fourier transforms}
\date{\today}

\author[D. Maulik]{Davesh Maulik}
\address{Massachusetts Institute of Technology}
\email{maulik@mit.edu}

\author[J. Shen]{Junliang Shen}
\address{Yale University}
\email{junliang.shen@yale.edu}

\author[Q. Yin]{Qizheng Yin}
\address{Peking University}
\email{qizheng@math.pku.edu.cn}

\begin{abstract}
We study the interaction between Fourier--Mukai transforms and perverse filtrations for a certain class of dualizable abelian fibrations. Multiplicativity of the perverse filtration and the ``Perverse $\supset$ Chern'' phenomenon for these abelian fibrations are immediate consequences of our theory.  We also show that our class of fibrations include families of compactified Jacobians of integral locally planar curves.  

Applications include the following: (a) we prove the motivic decomposition conjecture for this class (including compactified Jacobian fibrations), which generalizes Deninger--Murre's theorem for abelian schemes; (b) we provide a new proof of the $P=W$ conjecture for $\mathrm{GL}_r$; (c) we prove half of the $P=C$ conjecture concerning refined BPS invariants for the local~$\BP^2$; (d) we show that the perverse filtration for the compactified Jacobian associated with an integral locally planar curve is multiplicative, which generalizes a result of Oblomkov--Yun for homogeneous singularities.

Our techniques combine Arinkin's autoduality for coherent categories, Ng\^o's support theorem for the decomposition theorem, Adams operations in operational $K$-theory, and Corti--Hanamura's theory of relative Chow motives. 

\end{abstract}

\maketitle

\setcounter{tocdepth}{1} 

\tableofcontents
\setcounter{section}{-1}

\section{Introduction}

\subsection{Overview}
Throughout, we work over the complex numbers $\BC$. For a proper morphism $f: X \to Y$ between nonsingular varieties, the perverse truncation functor \cite{BBD} filters the derived direct image $Rf_* \BQ_X \in D^b_c(Y)$, which yields an increasing filtration on the cohomology either for the total space $X$ or for a closed fiber. Such a filtration is called the perverse filtration, which encodes important topological invariants of the map $f: X \to Y$.

In recent years, perverse filtrations have played a crucial role in the study of integrable systems, enumerative geometry, and geometric representation theory. For many interesting abelian fibrations (\emph{e.g.}~Hitchin systems), the associated perverse filtration is discovered to satisfy the following two mysterious properties:
\begin{enumerate}
    \item[(i)] it is \emph{multiplicative} with respect to the cup-product, and
    \item[(ii)] the location of a tautological class in the filtration is determined by its \emph{Chern grading} of the universal family.
\end{enumerate} 
Indeed, the $P=W$ conjecture of de Cataldo--Hausel--Migliorini for $\mathrm{GL}_r$ is equivalent to~(i,~ii) for the $\mathrm{GL}_r$-Hitchin system \cite{dCHM1, dCMS}, and was proven recently using special properties of Hitchin moduli spaces \cite{MS_PW, HMMS}. On the other hand, similar phenomena have also been found or conjectured for Lagrangian fibrations associated with compact hyper-K\"ahler manifolds \cite{SY, dCMS}, compactified Jacobians associated with torus knots \cite{OY, OY2}, and moduli of $1$-dimensional stable sheaves on~$\BP^2$ \cite{KPS}, and one would like to understand this beyond the Hitchin setting. Note that, for general proper morphisms, the perverse filtration will typically not be multiplicative even if the singularities of the fibers are mild \cite[Exercise 5.6.8]{Park}.

The purpose of this paper is to systematically study the perverse filtrations associated with abelian fibrations, and give a uniform explanation for (i, ii) that applies in different geometric settings. We show that both (i) and (ii) are essentially consequences of the \emph{duality} between derived categories of coherent sheaves and the \emph{support theorem} in the decomposition theorem package. Our work also shows the motivic nature of the decomposition theorem associated with abelian fibrations.

\subsection{Dualizable abelian fibrations}\label{Sec0.2}
In Section \ref{Sec1.4}, we introduce a class of proper morphisms, called \emph{dualizable abelian fibrations}. They are modelled on Hitchin systems and compactified Jacobian fibrations. 

Roughly, for nonsingular varieties $M$ and $B$, we say that a proper morphism $\pi: M \to B$ is a dualizable abelian fibration if it is an abelian fibration with a dual fibration $\pi^\vee: M^\vee \to B$, satisfying the following properties:

\begin{enumerate}
    \item[(a)] (Duality) the bounded derived categories of coherent sheaves on $M$ and $M^\vee$ are related by Fourier--Mukai transforms with certain nice properties similar to those of dual abelian schemes;
    \item[(b)] (Support) every simple perverse summand in the decomposition theorem for $R\pi_* \BQ_M$ has full support $B$.
\end{enumerate}
We refer to Section \ref{Sec1.4} for more precise statements concerning (a, b). The Chern character of the Fourier--Mukai kernel of (a) induces a Fourier transform in cohomology:
\[
\mathfrak{F} = \sum_k \mathfrak{F}_k: H^*(M^\vee, \BQ) \to H^*(M, \BQ), \quad \mathfrak{F}_k(H^d(M^\vee, \BQ)) \subset H^{d+2k-2g}(M, \BQ),
\]
where $g$ is the relative dimension $\dim M - \dim B$. The Fourier inverse $\mathfrak{F}^{-1}= \sum_k \FF^{-1}_k$ behaves similarly. In order for the Fourier operators to have interesting homological consequences, we further impose a vanishing condition on the relative product $M^\vee \times_B M^\vee$. We say that a dualizable abelian fibration as above satisfies \emph{the Fourier vanishing (FV)}, if 
\begin{equation}
\tag{FV} \quad \quad \FF^{-1}_i \circ \FF_j = 0,\quad  i+j < 2g.
\end{equation}

\begin{thm}\label{thm0.1}
Let $\pi: M \to B$ be a dualizable abelian fibration satisfying (FV). Then
\begin{enumerate}
    \item[(i)] (Multiplicativity) the perverse filtration associated with $\pi$ is multiplicative, \emph{i.e.},
\[
P_kH^d(M, \BQ) \times P_{k'}H^{d'}(M,\BQ) \xrightarrow{~~\cup~~} P_{k+k'}H^{d+d'}(M, \BQ);
\]
\item[(ii)] (Perverse $\supset$ Chern\footnote{The term ``Perverse $\supset$ Chern'' here means that the perversity (see Section \ref{1.3.3} for the definition of perversity) of a class $\mathfrak{F}_k(\alpha)$ is bounded by the Chern grading~$k$ associated with the Fourier transform.}) for any class $\alpha \in H^*(M^\vee, \BQ)$, we have
\[
\mathfrak{F}_k(\alpha) \in P_kH^*(M,\BQ).
\]
\end{enumerate}
\end{thm}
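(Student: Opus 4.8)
The plan is to deduce both statements from the behavior of the Fourier operators on the perverse filtration, with the support condition (b) doing the key work of pinning down perversities and condition (FV) forcing the multiplicativity. The first step is to understand, via the support theorem, what $R\pi_*\BQ_M$ looks like: since every simple perverse summand has full support $B$, the decomposition theorem gives $R\pi_*\BQ_M \cong \bigoplus_i {}^p\!R^i\pi_*\BQ_M[-i]$ with each summand a shifted IC-sheaf on all of $B$. Consequently the perverse filtration $P_\bullet H^*(M,\BQ)$ is governed entirely by the generic fiber, and one expects a relative Hard Lefschetz / $\mathfrak{sl}_2$-type symmetry centered in perverse degree $g$. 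The second step is to translate the Fourier--Mukai kernel on $M^\vee\times_B M$ — more precisely, its Chern character — into a correspondence acting on cohomology, and to track how the perverse grading on the source transforms. Here the input from part (a) of the definition of a dualizable abelian fibration (the kernel is ``nice'' like that of dual abelian schemes, so $\mathfrak{F}$ and $\mathfrak{F}^{-1}$ are inverse up to sign and shift, and each graded piece $\mathfrak{F}_k$ raises cohomological degree by $2k-2g$) should be combined with the structure of $R\pi_*\BQ$ from Step 1 to show that $\mathfrak{F}_k$ shifts perverse degree by exactly $k$ in the appropriate sense; this gives (ii) directly, namely $\mathfrak{F}_k(\alpha)\in P_k H^*(M,\BQ)$.

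For (i), the strategy is to write an arbitrary class in $P_k H^*(M,\BQ)$ in terms of the Fourier operators and reduce cup-product multiplicativity to a statement about the interaction of $\mathfrak{F}$ with the ring structure. Because $\mathfrak{F}$ is invertible, the image of $\mathfrak{F}_{\le k} := \sum_{j\le k}\mathfrak{F}_j$ should exhaust $P_k$ (using (ii) for the inclusion and a dimension/symmetry count from the full-support decomposition for the reverse), so it suffices to control $\mathfrak{F}_j(\alpha)\cup\mathfrak{F}_{j'}(\alpha')$. The standard mechanism is that cup product on $M$ transports under $\mathfrak{F}^{-1}$ to the Pontryagin-type (convolution) product on $M^\vee$, which involves $\mathfrak{F}^{-1}_i\circ\mathfrak{F}_j$ for various $i,j$; the hypothesis (FV) kills all the terms with $i+j<2g$, leaving only contributions whose perverse degree adds correctly, so that $\mathfrak{F}_j(\alpha)\cup\mathfrak{F}_{j'}(\alpha')\in P_{j+j'}$. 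Combined with the previous paragraph this yields $P_k\times P_{k'}\xrightarrow{\cup}P_{k+k'}$.

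The main obstacle I expect is Step 2 together with the precise bookkeeping in (i): making rigorous the claim that the Chern-character correspondence of the Fourier--Mukai kernel intertwines the perverse filtrations with the prescribed shift $\mathfrak{F}_k(P_\bullet)\subset P_{\bullet+k}$ requires knowing that the kernel itself is, in a suitable derived sense, compatible with the relative structure over $B$ — i.e.\ one needs a relative form of the Fourier--Mukai equivalence and the fact (from part (a)) that it exchanges the trivial and the full perverse filtrations on the two sides, then pass from the categorical statement to cohomology via Chern character and Grothendieck--Riemann--Roch. The delicate point is that Chern character does not commute with the perverse truncation on the nose, so the argument has to exploit the full-support hypothesis (b) to control the ``error terms'': with full support, $P_\bullet$ is detected by restriction to the open locus where $\pi$ is smooth, and there the Fourier--Mukai transform literally is the relative Fourier transform for the abelian scheme, where the degree/perversity shift of $\mathfrak{F}_k$ is classical (Deninger--Murre / Beauville). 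Extending this from the smooth locus to all of $M$ is exactly where the support theorem is indispensable, and I anticipate that is where the real content of the argument lies.
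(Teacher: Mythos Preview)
Your overall architecture matches the paper's: use the Fourier transform to build something that realizes the perverse truncation over the smooth locus, invoke full support (b) to extend to all of $B$, and handle multiplicativity by transporting the cup-product to the convolution on $M^\vee$. But two concrete pieces are missing, and without them the argument does not close.

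First, your route to ``image of $\FF_{\leq k}$ exhausts $P_k$'' via a dimension/symmetry count is not how this works, and I do not see how to make it rigorous. The paper instead constructs explicit \emph{correspondences}
\[
\Fp_k \;=\; \sum_{i \le k} \FF_i \circ \FF^{-1}_{2g-i} \in \Corr_B^0(M,M),
\]
and uses (FV) to show these are honest projectors (idempotent at the level of relative Chow motives, not just cohomology). It is this projector, not a dimension argument, that one compares over $U$ to the Deninger--Murre projector and then extends by full support. Your proposal to argue directly that ``$\FF_k$ shifts perverse degree by $k$'' runs into exactly the obstacle you name---Chern character does not commute with perverse truncation---and the way around it is to have an actual cycle-level projector whose realization can be identified sheaf-theoretically.

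Second, and more seriously, your multiplicativity argument attributes the entire vanishing to (FV), but (FV) alone is not enough. The paper writes $[\Delta^{\mathrm{sm}}_{M/B}] = \FF \circ \FC \circ (\FF^{-1} \times \FF^{-1})$ with $\FC$ the Chow-theoretic convolution, and the degree count that forces $\Fq_{k+l+1}\circ [\Delta^{\mathrm{sm}}_{M/B}] \circ (\Fp_k \times \Fp_l)=0$ uses, in addition to (FV), the estimate $\FC \in \Corr_B^{\ge -g}$. That estimate comes from condition (a2): the convolution kernel $\CK$ has \emph{codimension $g$} support in $M^\vee \times_B M^\vee \times_B M^\vee$. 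You never invoke (a2), and without the codimension bound on $\CK$ the four contributions $(g-k)+(g-l)+(\text{deg }\FC)+(k+l+1-g)$ need not sum to something positive, so the vanishing does not follow.
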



In fact, we prove a stronger version of Theorem \ref{thm0.1} which is of a motivic nature and specializes to sheaf-theoretic statements; see Theorem \ref{thm:main}.

Before discussing applications, we make some remarks on the condition (FV). For an abelian scheme $\pi: A \to B$ with its dual $\pi^\vee: A^\vee \to B$, the Fourier transform $\FF$ was first considered by Beauville \cite{B}. It induces a canonical (motivic) decomposition, called the \emph{Beauville decomposition}, of the cohomology or the Chow ring of the total space $A$; see Section \ref{Sec1.2} for more discussions on this construction. Once we obtain the Beauville decomposition, the statements (i, ii) above for the abelian scheme $\pi: A \to B$ are immediate consequences. A key step in Beauville's theory is the observation that the operators $\FF_k$ provide \emph{projectors}, which follows from the vanishing
\begin{equation*}\label{too_strong}
\FF^{-1}_i \circ \FF_j = 0, \quad i+j \neq 2g.
\end{equation*}
Clearly, this vanishing is stronger than (FV) in the case of abelian schemes. The reason why we do not pose this stronger condition is that it is hard to verify when the abelian fibration has singular fibers, and furthermore, it is unclear if the vanishing for $i+j>2g$ holds in general. 


On the other hand, the weaker condition (FV) is already sufficient to deduce the desired properties (i, ii) of Theorem \ref{thm0.1}. Moreover, the condition (FV) is natural: in view of Arinkin's work \cite{A1, A2}, we will explain in Section \ref{comp_jac} that (FV) holds essentially for the \emph{same reason} as the fact that the normalized Poincar\'e line bundle and the Fourier--Mukai transforms can be extended over the singular fibers for certain abelian fibrations. The following theorem is our main source of applications.

\begin{thm}\label{thm0.2}
Let $C \to B$ be a flat family of integral projective curves which admits a section through its smooth locus. Let $\pi: \overline{J}_C \to B$ be the associated compactified Jacobian fibration. If
\begin{enumerate}
    \item[(i)] every curve in the family $C \to B$ has at worst planar singularities, and
    \item[(ii)] the total space $\overline{J}_C$ is nonsingular,
\end{enumerate}
then $\pi: \overline{J}_C \to B$ is a (self-)dualizable abelian fibration which satisfies (FV).
\end{thm}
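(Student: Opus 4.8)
The plan is to verify the three defining conditions of a dualizable abelian fibration satisfying (FV) from Section~\ref{Sec1.4} --- the duality axiom (a), the support axiom (b), and the Fourier vanishing (FV) --- relying on Arinkin's autoduality theorem \cite{A1, A2} for (a) and (FV), and on Ng\^o's support theorem for (b). To begin, in a flat family of integral curves the arithmetic genus is constant, so each fibre $\overline{J}_{C_b}$ has dimension $g := p_a(C_b)$, the relative compactified Jacobian $\pi$ is flat and proper of relative dimension $g$, and $\overline{J}_C$ has pure dimension $\dim B + g$; this identifies $g$ with the relative dimension appearing in Theorem~\ref{thm0.1}.

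\emph{Duality.} The section of $C \to B$ makes $\overline{J}_C$ a fine moduli space of rank-one torsion-free sheaves of fixed degree on the fibres of $C/B$, so it carries a universal sheaf and is canonically self-dual, $\overline{J}_C^\vee \cong \overline{J}_C$. By Arinkin's theorem \cite{A1, A2}, the hypotheses that the fibres are integral with planar singularities guarantee that the Poincar\'e line bundle on the smooth locus extends to a maximal Cohen--Macaulay sheaf $\mathcal{P}$ on $\overline{J}_C \times_B \overline{J}_C$, flat over each factor, whose Fourier--Mukai functor $\Phi_{\mathcal{P}}$ is an autoequivalence of $D^b_{\mathrm{coh}}(\overline{J}_C)$. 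It then remains to check the further bookkeeping required by the axioms of Section~\ref{Sec1.4} --- normalization of $\mathcal{P}$ along the zero section, equivariance for the relative Picard group scheme $\Pic^0_{C/B}$, and the inversion formula identifying $\Phi_{\mathcal{P}} \circ \Phi_{\mathcal{P}}$ with a shift of the structure sheaf of a codimension-$g$ subvariety of $\overline{J}_C \times_B \overline{J}_C$ (the diagonal, or the graph of $-1$) --- each of which is established in, or follows formally from, \cite{A1, A2}, with the smoothness hypothesis (ii) streamlining several checks.

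\emph{Support.} The fibration $\pi$, together with the relative group scheme $\Pic^0_{C/B}$ acting on it, forms a weak abelian fibration with integral fibres. The key point is that smoothness of the total space $\overline{J}_C$ forces the $\delta$-stratification of $B$ to have the expected codimensions --- equivalently, $C/B$ is $\delta$-regular --- which is precisely the hypothesis under which Ng\^o's support theorem applies, its conclusion for integral fibres being that every simple perverse summand of $R\pi_*\BQ_M$ has full support $B$. (Alternatively one can transport the support theorem of Migliorini--Shende for relative Hilbert schemes of planar curves across the Abel--Jacobi maps.) I expect this to be the main obstacle: one must establish the deformation-theoretic implication that smoothness of $\overline{J}_C$ yields $\delta$-regularity, which I would extract from the local structure of compactified Jacobians of planar curve singularities and the relation between smoothness of the relative compactified Jacobian and versality of the underlying family of singularities.

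\emph{Fourier vanishing.} By Grothendieck--Riemann--Roch, the composition $\FF^{-1}_i \circ \FF_j$ is a graded piece of the correspondence on $\overline{J}_C \times_B \overline{J}_C$ attached to the kernel of $\Phi_{\mathcal{P}} \circ \Phi_{\mathcal{P}}$; pinning down which normalization of $\mathcal{P}$ (up to a universal square-root-Todd twist, which leaves the codimension grading unchanged) makes this composition law clean is the technical heart of this part. Granting it, the inversion formula identifies that kernel --- up to a shift and a sign --- with the structure sheaf of a codimension-$g$ subvariety of $\overline{J}_C \times_B \overline{J}_C$. Since a coherent sheaf supported in codimension $\ge g$ has vanishing Chern-character components in all degrees below $g$, and since $\FF_k$ carries the degree shift $2k - 2g$ so that $\FF^{-1}_i \circ \FF_j$ corresponds to codimension $i + j - g$, one concludes $\FF^{-1}_i \circ \FF_j = 0$ exactly when $i + j < 2g$, which is (FV). The argument is intrinsically one-sided: the higher-codimension corrections to that Chern character --- which vanish in the smooth abelian-scheme case, where the relevant normal bundle is trivial, but need not vanish here --- account for the possibly non-zero terms with $i + j > 2g$, as anticipated in the remarks following Theorem~\ref{thm0.1}.
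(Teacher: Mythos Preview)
Your outline for duality (a1) and support (b) matches the paper's approach: Arinkin's Poincar\'e sheaf and autoduality for (a1), and Ng\^o's support theorem combined with the Severi inequality (smoothness of $\overline{J}_C$ implies $\delta$-regularity, \cite[Lemma~4.1]{MS}) for (b). Contrary to your expectation, this step is not the main obstacle --- it is a known consequence of the smoothness hypothesis. You also omit axiom (a2), the codimension-$g$ bound on the convolution kernel $\CK$; the paper handles this via Arinkin's dimension estimate (Proposition~\ref{Arinkin_vanishing}), which is the $n=3$ case of a general bound for the objects $\CK_n$.

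The genuine gap is in your argument for (FV). You observe that $\CP^{-1}\circ\CP \simeq \CO_{\Delta}$ is supported in codimension $g$, and correctly note that $\FF^{-1}_i\circ\FF_j$ lives in codimension $i+j-g$ in $\overline{J}_C\times_B\overline{J}_C$. But what the codimension bound on $\CO_\Delta$ gives you is only that the \emph{sum}
\[
\sum_{i+j=k}\FF^{-1}_i\circ\FF_j = 0, \quad k<2g
\]
(indeed this already follows for all $k\neq 2g$ from Lemma~\ref{lem:ff-1} by degree reasons). It does \emph{not} give the vanishing of each individual term $\FF^{-1}_i\circ\FF_j$, which is what (FV) asserts. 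The different terms with fixed $i+j=k$ live in the same Chow group and could in principle cancel. The paper makes exactly this point in Section~3.4: for abelian schemes one separates the terms using the multiplication-by-$N$ map, which scales them by distinct powers of $N$; but this fails here because there is no $[N]$ on the total space and $\CP$ is not a line bundle.

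The paper's replacement mechanism is the Adams operations $\psi^N$ in $K$-theory (Section~\ref{Sec3.5}). One shows, by an Arinkin-type support argument, that the $K$-theory class $\CP^{-1}\circ\psi^N(\CP)$ is supported in codimension $\geq g$ for every $N$; applying $\tau$ and using that $\psi^N$ scales the degree-$k$ localized Chern character by $N^k$ then produces an infinite family of linear relations among the $\FF^{-1}_i\circ\widetilde{\ch}_{j+\dim B}(\CP)$ with varying coefficients $N^{j+\dim B}$, forcing each term to vanish individually. This Adams-operation step is the technical heart of the proof of (FV) and is entirely absent from your proposal.
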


By promoting the Fourier theory from schemes to gerbes, we also prove a version of Theorems \ref{thm0.1} and \ref{thm0.2} (as well as Theorem \ref{CoHa} below) for certain twisted compactified Jacobian fibrations which are associated with families of curves admitting no section; see Corollaries~\ref{cor:main} and \ref{cor:motdectwist}.

\subsection{Applications}
We discuss in this section some applications of Theorem \ref{thm0.1}, which explain and unify some common features discovered in non-abelian Hodge theory, enumerative geometry, and representation theory.

\subsubsection{The motivic decomposition conjecture}
We first note that as a by-product of our proof of Theorem \ref{thm0.1}, we verify in Corollary \ref{motivic_decomp} the motivic decomposition conjecture of Corti--Hanamura \cite{CH} for a dualizable abelian fibration $\pi: M\to B$ satisfying (FV). This extends the motivic decomposition of abelian schemes by Deninger--Murre \cite{DM}. Combined with Theorem~\ref{thm0.2}, we obtain the following.

\begin{thm}\label{CoHa}
Let $\pi: \overline{J}_C \to B$ be a compactified Jacobian fibration as in Theorem \ref{thm0.2}. Then the decomposition of $R\pi_* \BQ_{\overline{J}_C}$ into (shifted) semisimple perverse sheaves admits a motivic lifting.
\end{thm}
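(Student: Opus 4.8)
The plan is to combine the two main structural inputs of the paper. By Theorem \ref{thm0.2}, the compactified Jacobian fibration $\pi\colon \overline{J}_C\to B$ of the stated form is a (self-)dualizable abelian fibration satisfying (FV). The task is then purely a matter of invoking the motivic refinement of Theorem \ref{thm0.1} --- namely Theorem \ref{thm:main} and its Corollary \ref{motivic_decomp} --- for this particular fibration. So the proof should be essentially one line: apply Corollary \ref{motivic_decomp} to $\pi\colon \overline{J}_C \to B$, using Theorem \ref{thm0.2} to check the hypotheses. The content has already been front-loaded into the general machinery; Theorem \ref{CoHa} is the geometric payoff, not a new argument.

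If I had to reconstruct why Corollary \ref{motivic_decomp} holds in the first place (which is presumably proven in the body, not here), I would proceed as follows. First, the support condition (b) in the definition of a dualizable abelian fibration guarantees, via Ng\^o's support theorem, that every simple perverse summand of $R\pi_*\BQ_{\overline{J}_C}$ is an IC-sheaf with full support $B$, hence is determined by a local system on a dense open $U\subseteq B$ over which $\pi$ is smooth (a genuine abelian scheme, after shrinking); this reduces the decomposition theorem to understanding the local systems $R^i\pi_*\BQ$ over $U$. Second, over $U$, the Fourier--Mukai kernel from the duality hypothesis (a) restricts to the classical Poincar\'e bundle, and its Chern character induces the Beauville--type Fourier operators $\FF_k$; the (FV) vanishing $\FF^{-1}_i\circ\FF_j=0$ for $i+j<2g$ is exactly what one needs to upgrade the $\FF_k$ into relative Chow-motivic projectors on $\overline{J}_C\times_B\overline{J}_C$, cutting $\mathfrak{h}(\overline{J}_C/B)$ into summands $\mathfrak{h}^i$ matching the perverse pieces. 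The bookkeeping here is the same as in Deninger--Murre \cite{DM} for abelian schemes, with Arinkin's autoduality \cite{A1,A2} supplying the kernel over the singular locus and Corti--Hanamura's formalism \cite{CH} providing the category of relative Chow motives in which to assemble the projectors.

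The one genuine subtlety to flag is that a section of $C\to B$ is used to normalize the Poincar\'e sheaf (and hence the Fourier kernel) so that it is honestly a line bundle, not merely a gerbe-twisted object; this is precisely why Theorem \ref{thm0.2} assumes a section, and it is what makes the untwisted motivic decomposition of Theorem \ref{CoHa} available. In the sectionless case one only gets the twisted statement of Corollary \ref{cor:main}. So the proof of Theorem \ref{CoHa} as stated should explicitly note that the section hypothesis of Theorem \ref{thm0.2} is what licenses the direct application of Corollary \ref{motivic_decomp} without any gerbe bookkeeping.

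The main obstacle --- already absorbed into the earlier results --- is verifying (FV) for $\overline{J}_C$, i.e.\ the vanishing of the composite Fourier operators below the critical range on the singular fibers; this rests on Arinkin's extension of the Poincar\'e bundle across the locus of integral planar curves and is the technical heart of Theorem \ref{thm0.2}. Given that input, there is no remaining obstacle: Theorem \ref{CoHa} follows formally.
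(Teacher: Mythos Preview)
Your proposal is correct and matches the paper's approach: Theorem \ref{CoHa} is obtained by combining Theorem \ref{thm0.2} (which shows $\pi:\overline{J}_C\to B$ is a dualizable abelian fibration satisfying (FV)) with Corollary \ref{motivic_decomp}; indeed, at the start of Section \ref{Sec3.5} the paper states that verifying (FV) for $\pi:\overline{J}_C\to B$ ``completes the proofs of Theorems \ref{thm0.2}, \ref{CoHa}, and \ref{thm0.5}.'' One small wording point: the full support condition (b) is part of the \emph{definition} of a dualizable abelian fibration, and Ng\^o's support theorem (together with the Severi inequality) is what you invoke to \emph{verify} (b) for $\overline{J}_C$, not a consequence of (b).
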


We refer to Corollary \ref{motivic_decomp} for the precise statement. This verifies the motivic decomposition conjecture \cite{CH} for $\pi: \overline{J}_C\to B$, where the projectors are provided by Arinkin's Fourier--Mukai theory. See \emph{e.g.}~\cite{CDN, ACLS} for other recent developments of the motivic decomposition conjecture, and \cite{dCM?} for an unconditional construction of the (homological) motivic decomposition in the setting of motivated classes.

\subsubsection{The $P=W$ conjecture}\label{0.3.2.2}

One of our motivations and applications comes from the $P=W$ conjecture in non-abelian Hodge theory, which we briefly review.

For a projective curve $\Sigma$ of genus $g\geq 2$ and two coprime integers $r,n$, the non-abelian Hodge correspondence yields a diffeomorphism between the Dolbeault moduli space $M_{\mathrm{Dol}}$ which parameterizes rank $r$ and degree $n$ stable Higgs bundles on~$\Sigma$, and the Betti moduli space $M_B$ which is the corresponding character variety. This further induces
\begin{equation}\label{NAH}
H^*(M_{\mathrm{Dol}}, \BQ) = H^*(M_B, \BQ).
\end{equation}
In 2010, de Cataldo, Hausel, and Migliorini \cite{dCHM1} proposed a relationship between the topology of the Hitchin system $h: M_{\mathrm{Dol}} \to B$ and the Hodge theory of $M_B$ via (\ref{NAH}); more precisely, they conjectured that the perverse filtration associated with the Hitchin system is matched with the double indexed weight filtration for the character variety:
\begin{equation}\label{P=W}
\text{``}P=W\text{''}, \quad P_kH^*(M_{\mathrm{Dol}}, \BQ) = W_{2k} H^*(M_{\mathrm{B}}, \BQ).
\end{equation}
This conjecture, known as the $P=W$ conjecture, has now been proven by the first two authors~\cite{MS_PW} and Hausel--Mellit--Minets--Schiffmann \cite{HMMS} independently via different methods.  

\begin{thm}[\cite{MS_PW, HMMS}]\label{P=W!}
The $P=W$ conjecture (\ref{P=W}) holds.
\end{thm}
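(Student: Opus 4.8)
The plan is to deduce Theorem~\ref{P=W!} (the $\mathrm{GL}_r$ case) from Theorem~\ref{thm0.1}. First I would recall the reduction of de Cataldo--Hausel--Migliorini and de Cataldo--Maulik--Shen \cite{dCHM1, dCMS}: under the isomorphism $H^*(M_{\mathrm{Dol}},\BQ) = H^*(M_B,\BQ)$, the ring $H^*(M_{\mathrm{Dol}},\BQ)$ is generated by tautological classes (components of the Chern character of a universal bundle on $M_{\mathrm{Dol}}\times\Sigma$, which exists up to twist since $\gcd(r,n)=1$), the weight filtration $W$ on $H^*(M_B,\BQ)$ is automatically multiplicative and is explicitly understood on these generators, and therefore $P_\bullet = W_{2\bullet}$ holds as soon as one knows (a) the perverse filtration $P$ for the Hitchin system $h: M_{\mathrm{Dol}}\to B$ is multiplicative, and (b) each tautological generator of expected weight $2k$ lies in $P_kH^*(M_{\mathrm{Dol}},\BQ)$; given (a) and (b), the opposite inclusion $P_k\subseteq W_{2k}$ is forced by the criterion of \cite{dCMS}, which compares the associated graded dimensions via curious hard Lefschetz on the Betti side and relative hard Lefschetz on the Dolbeault side. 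Thus the whole statement is reduced to (a) and (b).

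Both (a) and (b) fall out of Theorem~\ref{thm0.1} once one verifies that $h: M_{\mathrm{Dol}}\to B$ is a dualizable abelian fibration satisfying (FV): (a) is part~(i), and (b) follows from part~(ii) since the tautological generators are assembled from the Chern character of the same universal spectral sheaf that defines the Fourier transform $\mathfrak{F}$, and under the matching of \cite{dCMS} a generator of expected weight $2k$ corresponds to the Chern-degree-$k$ piece, so part~(ii) places it in $P_kH^*(M_{\mathrm{Dol}},\BQ)$. To check the hypotheses of Theorem~\ref{thm0.1}: the support condition --- every perverse summand of $Rh_*\BQ_{M_{\mathrm{Dol}}}$ has full support $B$ --- is exactly Ng\^o's support theorem, in the form established for the $\mathrm{GL}_r$-Hitchin fibration by de Cataldo. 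Over the dense open locus $B^\circ\subset B$ of integral (hence planar, as they lie on the surface $\mathrm{Tot}(K_\Sigma)$) spectral curves, the restriction $h^\circ: M^\circ_{\mathrm{Dol}}\to B^\circ$ is a compactified Jacobian fibration with nonsingular total space, so Theorem~\ref{thm0.2} --- equivalently Arinkin's autoduality \cite{A1, A2} together with the coherent-cohomology vanishing that yields (FV) --- gives that $h^\circ$ is a dualizable abelian fibration satisfying (FV). Since the universal spectral family need not admit a section, this must be run in the gerbe-twisted setting, so one invokes Corollary~\ref{cor:main} rather than Theorem~\ref{thm0.1} verbatim.

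The main obstacle is the passage from $B^\circ$ to all of $B$: over $B\setminus B^\circ$ the spectral curves become reducible or non-reduced and no Fourier--Mukai equivalence exists there, so the dualizable-abelian-fibration structure --- in particular the operators $\mathfrak{F}_k,\mathfrak{F}^{-1}_k$ used to phrase (FV) and part~(ii) --- cannot be constructed directly over $B$. The resolution is not to extend the Fourier--Mukai kernel but to exploit the support theorem: because no perverse summand of $Rh_*\BQ_{M_{\mathrm{Dol}}}$ is supported on the small locus $B\setminus B^\circ$, the complex $Rh_*\BQ_{M_{\mathrm{Dol}}}$ is, summand by summand, the middle extension of its restriction to $B^\circ$, so the perverse filtration on $H^*(M_{\mathrm{Dol}},\BQ)$ and the cohomological Fourier operators are rigidly determined by the data over $B^\circ$, and the conclusions (i), (ii) established there propagate to $B$. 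Making this ``determined by a dense open'' principle precise in the motivic, relative-correspondence language of Theorem~\ref{thm:main}, while simultaneously carrying the gerbe twist forced by the absence of a section, is the technical heart of the new proof; once it is in place, (a) and (b) are immediate and Theorem~\ref{P=W!} follows.
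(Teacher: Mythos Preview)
Your outline is accurate up to the point where you restrict to the integral locus $B^\circ$, and you correctly identify that the Fourier--Mukai theory (hence Theorem~\ref{thm0.2} and Corollary~\ref{cor:main}) only applies there. The genuine gap is the final step: your ``propagation'' from $B^\circ$ to $B$ via full support is not a valid argument as stated, and the paper does \emph{not} proceed this way. Full support tells you that each perverse summand of $Rh_*\BQ_{M_{\mathrm{Dol}}}$ is the intermediate extension of its restriction to $B^\circ$; it does \emph{not} tell you that a relative correspondence (such as the projectors $\mathfrak p_k$, or the small diagonal governing multiplicativity) defined only over $B^\circ$ extends to a correspondence over $B$, nor that its action on global cohomology is determined by what happens over $B^\circ$. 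In particular the Fourier operators $\FF_k$ themselves are simply undefined over $B\setminus B^\circ$, so there is no cycle on $M^\vee\times_B M$ to which Theorem~\ref{thm:main} could apply. Your last paragraph concedes that making this precise ``is the technical heart of the new proof'', but offers no mechanism; the paper explicitly says (opening of Section~\ref{Sec5.4}) that this direct extension is \emph{not} available.

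What the paper actually does is borrow the reduction of \cite[Section~8]{HMMS} to change the moduli problem rather than extend the correspondence. One passes through the chain $M \leftarrow \widetilde{M} \hookrightarrow M^0 \hookrightarrow M^{\mathrm{par}} \hookleftarrow M^{\mathrm{parell}} \to \overline{J}^d_C$ of parabolic and meromorphic Hitchin moduli spaces, where the rightmost object is an honest (twisted) compactified Jacobian over a base parametrizing only \emph{integral} spectral curves. Corollary~\ref{cor:main} then gives ``stronger $P\supset C$'' (the operator bound $c_k(\gamma)\cup{}: P_i\to P_{i+k}$) for $M^{\mathrm{parell}}$; the results of \cite[Section~8]{HMMS} (filtered isomorphisms/injections and compatibility of the correspondences $\Gamma,\Gamma'$ with perverse filtrations) transport this back to $M$. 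Combined with Mellit's curious hard Lefschetz and the Shende/Markman identifications, this yields (\ref{P=W_taut}) and hence Theorem~\ref{P=W!}. So the missing ingredient in your proposal is precisely this parabolic detour: you need to replace the unjustified ``support forces extension'' step by the HMMS reduction to a moduli space whose Hitchin base already lies in the integral locus.
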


As we sketch now, our work in this paper provides a new proof of Theorem \ref{P=W!}.

The $P=W$ conjecture can be decomposed into three identities:
\begin{equation}\label{P=C=C=W}
P_k H^*(M_{\mathrm{Dol}}, \BQ) = C_k H^*(M_{\mathrm{Dol}}, \BQ) = C_kH^*(M_B, \BQ) = W_{2k}H^*(M_B, \BQ);
\end{equation}
Here $C_\bullet$ stands for the Chern filtration defined via the tautological classes of the moduli space. 

The second and third identities have been established earlier by work of Markman \cite{Markman} Hausel--Thaddeus \cite{HT1} and Shende \cite{Shende} respectively.  As a result, 
the $P=W$ conjecture is reduced to the first identity --- the ``Perverse = Chern'' phenomenon for the Hitchin system. 
Indeed, both existing proofs \cite{MS_PW, HMMS} proceed by establishing the first identity using special features of the moduli space of Higgs bundles.  


Moreover, in \cite{Mellit} Mellit proved the curious Hard Lefschetz for the character variety $M_B$, which further reduces the full $P=W$ (\ref{P=W}) to
\begin{equation}\label{P=W_taut}
    P_k H^*(M_{\mathrm{Dol}}, \BQ) \supset C_k H^*(M_{\mathrm{Dol}}, \BQ). 
\end{equation}

The present work grew out of an attempt to understand the geometric nature of (\ref{P=W_taut}).   The surprising interaction between the perverse filtration and the Chern classes predicts the existence of a good theory of Fourier transforms for certain abelian fibrations with singular fibers. The discovery of an analogous conjecture for the $\BP^2$ geometry (see Section \ref{0.3.2}) further suggests that (\ref{P=W_taut}) should be a phenomenon beyond Hitchin systems. This leads us to Theorem~\ref{thm0.1}. 

Conversely, we will show that (\ref{P=W_taut}) is a consequence of our Fourier theory; in combination with 
an argument from \cite{HMMS} to reduce to integral spectral curves, as we explain in Section \ref{Sec5.4}, this yields Theorem \ref{P=W!}.

\begin{rmk}
The above proof of $P=W$ suggests that, over the locus of integral spectral curves, (\ref{P=W_taut}) is a property of compactified Jacobian fibrations associated with curves with planar singularities and does not rely on the representation theory perspectives of \cite{MS_PW, HMMS}.  This approach gives a possible avenue for studying $P=W$ phenomena beyond the Higgs setting. More precisely, for any dualizable abelian fibration as in Theorem \ref{thm0.1} we may define the Chern filtration $C_k H^*(M, \BQ)$ as the span of the classes
\[
\FF_j(\alpha) \in H^*(M, \BQ), \quad j\leq k, \quad \alpha \in H^*(M^\vee, \BQ).
\]
This generalizes the Chern filtration defined in the $\mathrm{GL}_r$ case, and does not rely on Markman's generation result \cite{Markman}. Therefore, in view of (\ref{P=C=C=W}), the $P=W$ phenomenon is reduced to understanding the interaction between the Fourier transform and the weight filtration on the Betti side. Speculatively, for other Hitchin moduli spaces, this interaction may be related to the Betti geometric Langlands correspondence.
\end{rmk}

\subsubsection{Enumerative geometry of local $\BP^2$}\label{0.3.2}

A phenomenon similar to (\ref{P=W_taut}) was discovered in \cite{KPS} for the refined BPS invariants of local $\BP^2$. It has a different origin from $P=W$, since the non-abelian Hodge correspondence is not relevant here.

Fix coprime integers $r, \chi$ with $r \geq 3$. We consider the moduli $M_{r,\chi}$ of $1$-dimensional stable sheaves $F$ on $\BP^2$ with $[\mathrm{supp}(F)] = rH, \, \chi(F) = \chi$. Here~$H$ is the hyperplane class. The moduli space $M_{r,\chi}$ admits a proper Hilbert--Chow morphism
\[
h: M_{r,\chi} \to \BP H^0(\BP^2, \CO_{\BP^2}(r)), \quad F \mapsto \mathrm{supp}(F)
\]
where $\mathrm{supp}(-)$ stands for the Fitting support. This proper map induces a perverse filtration on the cohomology of $M_{r,\chi}$. Analogously to the tautological classes \cite{HT1} for the moduli of Higgs bundles, Pi and the second author \cite{PS} introduced the tautological class
\[
c_k(j) \in H^{2(k+j-1)}(M_{r,\chi}, \BQ)
\]
given by integrating the normalized $\mathrm{ch}_{k + 1}$ of a universal family over $H^j \in H^{2j}(\BP^2, \BQ)$; see Section \ref{5.3.2}. Moreover, it was shown in \cite{PS} that the first $3r-7$ tautological classes 
\[
c_k(j) \in H^{\leq 2(r - 2)}(M_{r,\chi}, \BQ)
\]
generate the total cohomology as a $\BQ$-algebra, and there is no relation in degrees $\leq 2(r-2)$. This allows us to consider the Chern filtration $C_k H^{\leq 2(r-2)}(M_{r,\chi} ,\BQ)$ spanned by
\[
\prod_{i=1}^s c_{k_i}(j_i) \in H^{\leq 2(r-2)}(M_{r,\chi}, \BQ), \quad \sum_{i=1}^s k_i \leq k.
\]
The following conjecture was proposed in \cite{KPS}, inspired by the original $P=W$ conjecture:
\begin{equation} \label{P=C}
\text{``}P=C\text{''}, \quad   P_k H^{\leq 2(r-2)}(M_{r,\chi}, \BQ) = C_k H^{\leq 2(r-2)}(M_{r,\chi}, \BQ).
\end{equation}
Perverse filtrations associated with moduli of $1$-dimensional sheaves have roots in enumerative geometry \cite{HST,KL,MT,MS}. In particular, the dimensions of the left-hand side of (\ref{P=C}) calculate the refined BPS invariants of the local Calabi--Yau threefold $\mathrm{Tot}(K_{\BP^2})$ for the curve class~$rH$. The $P=C$ conjecture (\ref{P=C}) offers a geometric explanation to an asymptotic product formula for the refined BPS invariants of the local $\BP^2$ calculated via Pandharipande--Thomas theory. We refer to \cite[Section 0]{KPS} for an introduction. 

The following theorem, which will be proven in Section \ref{Sec5.3}, is a consequence of Theorem~\ref{thm0.1}. It verifies half of the conjecture (\ref{P=C}). 

\begin{thm}\label{thm0.4}
We have
\[
P_k H^{\leq 2(r-2)}(M_{r,\chi}, \BQ) \supset C_k H^{\leq 2(r-2)}(M_{r,\chi}, \BQ).
\]
\end{thm}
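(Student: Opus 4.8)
The plan is to deduce this from Theorem \ref{thm0.1}(ii) once we place the moduli space $M_{r,\chi}$ inside the framework of dualizable abelian fibrations satisfying (FV). First I would recall that the Hilbert--Chow map $h: M_{r,\chi}\to \p H^0(\p^2,\CO(r))$ restricts, over the open locus $B^\circ$ of integral curves $C$, to a compactified Jacobian fibration: the fiber over $[C]$ is the compactified Jacobian of the degree-shifted line bundles on $C$, since a $1$-dimensional stable sheaf with integral support of that multiplicity is precisely a rank-$1$ torsion-free sheaf on the curve. Every curve in the linear system $|rH|$ on $\p^2$ is planar by construction, so condition (i) of Theorem \ref{thm0.2} holds over $B^\circ$; nonsingularity of $M_{r,\chi}$ (hence of the restricted total space) gives condition (ii). Note, however, that a universal sheaf may only exist as a twisted sheaf, so one should work with the gerbe version, i.e. invoke Corollary \ref{cor:main} rather than Theorem \ref{thm0.2} directly. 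Thus $h|_{B^\circ}: M_{r,\chi}^\circ\to B^\circ$ is a (twisted self-)dualizable abelian fibration satisfying (FV), and Theorem \ref{thm0.1}(ii) applies to it.

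Next I would match the tautological classes $c_k(j)$ with the Fourier operators $\FF_\bullet$. By definition $c_k(j)$ is obtained by integrating the normalized $\mathrm{ch}_{k+1}$ of a universal family over a class in $H^{2j}(\p^2)$, which is exactly the Künneth-type decomposition of the Chern character of the Fourier--Mukai kernel that defines $\FF=\sum_k \FF_k$. Concretely, I would check that $c_k(j)$ lies in the image of $\FF_k$ applied to a class pulled back from $M_{r,\chi}^\vee$ (here the self-duality identifies $M^\vee$ with $M_{r,\chi}$ again, and the relevant source class is a product of a base class with the fundamental class of the fiber). This is the same bookkeeping that identifies the Chern filtration $C_\bullet$ with the span of the $\FF_j(\alpha)$ for $j\le k$ in the remark following Theorem \ref{P=W!}; granting that identification, Theorem \ref{thm0.1}(ii) yields $c_k(j)\in P_k H^*(M_{r,\chi}^\circ,\BQ)$, and products $\prod_i c_{k_i}(j_i)$ with $\sum k_i\le k$ land in $P_k$ by multiplicativity, Theorem \ref{thm0.1}(i). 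Hence $C_k H^*(M_{r,\chi}^\circ,\BQ)\subset P_k H^*(M_{r,\chi}^\circ,\BQ)$.

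The remaining step is to transfer this inclusion from the open locus $B^\circ\subset \p H^0(\p^2,\CO(r))$ back to the full moduli space $M_{r,\chi}$, in the cohomological range $*\le 2(r-2)$. Here I would use the restriction map $H^*(M_{r,\chi},\BQ)\to H^*(M_{r,\chi}^\circ,\BQ)$: by \cite{PS} the tautological classes $c_k(j)$ in degrees $\le 2(r-2)$ generate $H^{\le 2(r-2)}(M_{r,\chi},\BQ)$ freely, and one needs that this restriction is injective in that range (equivalently, that the complement of $M_{r,\chi}^\circ$ has sufficiently high codimension, which follows from the codimension estimate on non-integral curves in $|rH|$ for $r\ge 3$). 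Since the perverse filtration is compatible with pullback along the open immersion — the perverse truncation commutes with restriction to an open base, so $P_k H^*(M_{r,\chi},\BQ)$ maps into $P_k H^*(M_{r,\chi}^\circ,\BQ)$ — and the tautological generators already lie in $P_k$ after restriction, a dimension count in each degree $\le 2(r-2)$ forces $\prod_i c_{k_i}(j_i)\in P_k H^{\le 2(r-2)}(M_{r,\chi},\BQ)$ whenever $\sum k_i\le k$, giving the desired inclusion.

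The main obstacle I anticipate is precisely this last transfer: one must control the perverse filtration under the base restriction carefully (the perverse filtration with respect to $h$ on all of $M_{r,\chi}$ versus with respect to $h|_{B^\circ}$), and verify that no tautological relations or perversity jumps are introduced in the range $*\le 2(r-2)$ by passing between the open locus and the full space. The dualizability input over $B^\circ$, the identification of $c_k(j)$ with $\FF_k$, and multiplicativity are comparatively formal once Theorems \ref{thm0.1} and \ref{thm0.2} (in their gerbe form, Corollary \ref{cor:main}) are in hand; the honest work is the codimension/injectivity estimate for the boundary and the compatibility of $P_\bullet$ with $j^*$.
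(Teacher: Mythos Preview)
Your overall architecture matches the paper's proof exactly: restrict to the open locus $W$ of integral curves, invoke Corollary~\ref{cor:main} to get dualizability and (FV) for the twisted compactified Jacobian $\pi_d:\overline{J}^d_C\to W$, establish $c_k(j)\in P_k$ there, use multiplicativity for products, and transfer back via the codimension estimate (Lemma~\ref{lem3.2} and Corollary~\ref{cor3.3}). The paper also shows that restriction to $M_W$ is a \emph{filtered} isomorphism in degrees $\le 2(r-2)$, which is exactly the strengthening you flagged as the main obstacle; mere injectivity plus forward-compatibility of $P_\bullet$ would not suffice.

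The genuine gap is your step identifying $c_k(j)$ with $\FF_k(\alpha)$. The Fourier transform $\FF$ is built from the Poincar\'e sheaf $\CP_{1,d}$ on $\overline{\CJ}^1_C\times_B\overline{\CJ}^d_C$, whereas the tautological classes come from the Chern character of the universal sheaf $\BF$ on $\BP^2\times M_{r,\chi}$. These live on different spaces and are \emph{not} the same K\"unneth decomposition. The paper bridges them via Proposition~\ref{prop3.1}: one pushes $\CP_{1,d}$ forward along the Abel--Jacobi map and the evaluation map $C\hookrightarrow\BP^2$, and a Grothendieck--Riemann--Roch computation gives
\[
\mathrm{ch}(\overline{F}^d)=\Big(\overline{\mathrm{ev}}_*\big(e^{\lambda\cdot p_C^*D}\cap\overline{\mathrm{AJ}}^*\FF\big)\Big)\cup\frac{l_0}{1-e^{-l_0}}\cup e^{p_J^*l_J},
\]
so $\mathrm{ch}^{\delta_0}_{k+1}(\BF)$ is a sum of terms $\overline{\mathrm{ev}}_*(p_C^*\beta_j\cap\overline{\mathrm{AJ}}^*\FF_j)\cup\gamma_j(\omega_J,l_0)$ with $j\le k$, not a single $\FF_k$. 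The Todd factor $l_0$ is harmless (pulled back from the base, perversity $0$), but the line-bundle twist $l_J$ is not: it is precisely the normalization $\delta_0$---chosen so that $c_1^{\delta_0}$ has the correct perversity---that forces $\omega_J:=\delta_J+l_J\in P_1H^2$. Only then does multiplicativity combine with Proposition~\ref{prop3.1}(ii) to give $\mathrm{ch}^{\delta_0}_{k+1}(\BF)\in P_k$. Your sketch elides both the Abel--Jacobi/evaluation passage and the role of the normalization, and without them the perversity bound on $c_k(j)$ does not follow.
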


Consequently, the conjecture (\ref{P=C}) is equivalent to its numerical version \cite[Conjecture 0.1]{KPS}:
\[
\dim \mathrm{Gr}^P_iH^{i+j}(M_{r,\chi}, \BQ) = \dim \mathrm{Gr}^C_iH^{i+j}(M_{r,\chi}, \BQ), \quad i+j \leq 2(r-2). 
\]
We refer to Section \ref{Sec5.3} for more discussions on the $P=C$ conjecture (\ref{P=C}) and its consequences.

\subsubsection{Compactified Jacobians}

For an integral projective curve $C_0$ with planar singularities, the compactified Jacobian $\overline{J}_{C_0}$ is an integral projective variety. These varieties behave like local analogues of Hitchin systems; in particular, the singular cohomology $H^*(\overline{J}_{C_0}, \BQ)$ admits a canonical perverse filtration 
\begin{equation}\label{perverse}
P_0H^*(\overline{J}_{C_0}, \BQ) \subset P_1H^*(\overline{J}_{C_0}, \BQ) \subset  \cdots \subset H^*(\overline{J}_{C_0}, \BQ)
\end{equation}
by \cite{MY, MS1}. When $C_0$ is a rational curve with a unique planar singular point, it was proposed by Shende that $\overline{J}_{C_0}$ should serve as the Dolbeault side of a $P=W$ conjecture with some ``Betti moduli space'' related to the link of the singularity. This proposal was motivated by the connection between algebro-geometric invariants associated with a planar singularity and topological invariants associated with the corresponding link; we refer to \cite{PT_knot, OS, ORS, STZ, Trinh, BAMY} for relevant work and recent developments. Such a~$P=W$ conjecture is wide open to the best of our knowledge; nevertheless, it predicts that the perverse filtration (\ref{perverse}) has to be multiplicative with respect to the cup-product. For special singularities $x^p=y^q$ with $(p,q)=1$, the multiplicativity was proven by Oblomkov--Yun \cite{OY, OY2} using tautological generators and representations of the rational Cherednik algebra. 

We establish the multiplicativity in full generality as a consequence of the sheaf-theoretic version of Theorem \ref{thm0.1} together with Theorem \ref{thm0.2}. This provides evidence for the $P=W$ conjecture for compactified Jacobians.

\begin{thm} \label{thm0.5}
For an integral projective curve $C_0$ with planar singularities, the perverse filtration (\ref{perverse}) is multiplicative with respect to the cup-product, \emph{i.e.},
\[
P_kH^d(\overline{J}_{C_0}, \BQ) \times P_{k'}H^{d'}(\overline{J}_{C_0}, \BQ) \xrightarrow{~~\cup~~} P_{k+k'}H^{d+d'}(\overline{J}_{C_0}, \BQ).
\]
\end{thm}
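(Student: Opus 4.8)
The plan is to deduce Theorem \ref{thm0.5} from the sheaf-theoretic strengthening of Theorem \ref{thm0.1} applied to a good \emph{global} family through $C_0$, using that the filtration (\ref{perverse}) is, by construction, the one induced from such a family. \textbf{Step 1 (put $C_0$ in a family).} I would embed $C_0$ in a nonsingular projective surface $S$ (possible since the singularities are planar), fix a smooth point $p\in C_0$, and take $B$ to be a sufficiently general linear subsystem of $|mH\otimes I_p|$ for $m\gg 0$ with generic member smooth, restricted to its open locus of integral curves that are smooth at $p$. Then $B$ is a nonsingular quasi-projective base, $\mathcal C\to B$ has the section $b\mapsto p$, every fiber is an integral curve with planar singularities, and $b:=[C_0]\in B$. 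For $m\gg 0$ and $B$ general the family is versal at every singular point of every member, so by Fantechi--G\"ottsche--van Straten-type results (as used throughout the literature on Hilbert schemes of planar curves) the relative compactified Jacobian $\pi\colon\overline{\mathcal J}\to B$ has nonsingular total space; thus $\pi$ satisfies the hypotheses of Theorem \ref{thm0.2}, and $\overline{J}_{C_0}=\overline{\mathcal J}_b$. (If one prefers not to force the base point $p$, the existence of a section is instead bypassed via the twisted gerbe version, Corollary \ref{cor:main}.) Recall finally that, by \cite{MY, MS1}, the filtration (\ref{perverse}) on $H^*(\overline{J}_{C_0},\BQ)=\mathcal H^*(i_b^* R\pi_*\BQ_{\overline{\mathcal J}})$ is precisely the filtration induced by the perverse truncations of $R\pi_*\BQ_{\overline{\mathcal J}}$ on $B$, restricted to the point $b$, and that it is independent of the chosen family.

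\textbf{Steps 2--3 (apply the theorems and restrict to the fiber).} By Theorem \ref{thm0.2}, $\pi$ is a (self-)dualizable abelian fibration satisfying (FV), so by the sheaf/motivic strengthening Theorem \ref{thm:main} of Theorem \ref{thm0.1} the perverse filtration of $R\pi_*\BQ_{\overline{\mathcal J}}$ is multiplicative already at the level of complexes on $B$: the cup-product morphism $R\pi_*\BQ_{\overline{\mathcal J}}\otimes^{L}R\pi_*\BQ_{\overline{\mathcal J}}\to R\pi_*\BQ_{\overline{\mathcal J}}$ is compatible with perverse truncation, inducing ${}^{\mathfrak p}\tau_{\le i}\otimes^{L}{}^{\mathfrak p}\tau_{\le j}\to{}^{\mathfrak p}\tau_{\le i+j}$, realized by genuine morphisms of complexes through a splitting of the decomposition theorem coming from the projectors of Corollary \ref{motivic_decomp}. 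Now apply $i_b^*$: it is symmetric monoidal for $\otimes^{L}$ and, by proper base change, carries $R\pi_*\BQ_{\overline{\mathcal J}}$ to $R\Gamma(\overline{J}_{C_0},\BQ)$ with cup-product going to cup-product on $\overline{J}_{C_0}$. Hence the subspaces $P_kH^*(\overline{J}_{C_0},\BQ)=\mathrm{Im}\big(\mathcal H^*(i_b^*\,{}^{\mathfrak p}\tau_{\le k}\,R\pi_*\BQ_{\overline{\mathcal J}})\to H^*(\overline{J}_{C_0},\BQ)\big)$ satisfy $P_k\cup P_{k'}\subseteq P_{k+k'}$; by the family-independence recalled above, this filtration is (\ref{perverse}), which is the assertion of Theorem \ref{thm0.5}.

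\textbf{Main obstacle.} Granted Theorems \ref{thm0.1} and \ref{thm0.2}, the deduction is essentially formal, and the two points needing genuine care are: (i) the family construction in Step 1 --- producing, for an arbitrary and possibly very singular $C_0$, a family satisfying the hypotheses of Theorem \ref{thm0.2} with nonsingular relative compactified Jacobian and (if one wants) a section; and (ii) the restriction in Steps 2--3 --- here it is essential that multiplicativity be available in the strong sheaf/motivic form of Theorem \ref{thm:main} and not merely for $H^*$ of the total space, since $i_b^*$ is not perverse $t$-exact and a bare cohomological statement upstairs would not obviously descend to the intrinsically defined filtration (\ref{perverse}) on $H^*(\overline{J}_{C_0},\BQ)$.
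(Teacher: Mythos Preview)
Your proposal is correct and follows essentially the same route as the paper: embed $C_0$ as a closed fiber in a family $C\to B$ with nonsingular $\overline{J}_C$ (the paper simply cites \cite{MY} for this, whereas you sketch an explicit linear-system construction), apply Theorem~\ref{thm0.2} and the sheaf-theoretic multiplicativity of Theorem~\ref{thm:main}(iii), restrict to the fiber, and invoke the intrinsicality result of \cite{MY} (Theorem~\ref{thm1.4} in the paper) to identify the resulting filtration with~(\ref{perverse}). Your emphasis that one needs the sheaf-level statement of Theorem~\ref{thm:main} rather than the global-cohomology statement of Theorem~\ref{thm0.1} is exactly the point the paper makes after Theorem~\ref{thm:main}.
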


In \cite{JR}, Rennemo constructed a decomposition  
\[
H^*(\overline{J}_{C_0}, \BQ) = \bigoplus_{k,d} D_k H^{d}(\overline{J}_{C_0}, \BQ)
\]
using natural operators on the cohomology of the Hilbert schemes of points $C^{[k]}_0$, and showed that it splits the perverse filtration. He asked in \cite[Question 1.4]{JR} whether this decomposition is multiplicative with respect to the cup-product. Theorem \ref{thm0.5} implies that $D_{\leq k}$ is multiplicative, which answers affirmatively a weaker version of Rennemo's question. However, so far we do not know if the multiplicativity holds for a natural splitting of the perverse filtration. See also \cite[Conjecture~2.17]{MY} for a natural (conjectural) splitting of the perverse filtration.

\subsection{Outline of paper}
We briefly outline the contents of this paper.  In Section \ref{Sec1}, we review Beauville's work on abelian schemes and define the class of dualizable abelian fibrations which generalize it.  In Section \ref{Sec2}, we prove our first main result Theorem \ref{thm0.1}, giving multiplicativity and Perverse $\supset$ Chern statements for dualizable abelian fibrations.  Here we use the work of Corti--Hanamura on relative Chow motives which gives motivic enhancements of these results.  In Section \ref{comp_jac}, we prove Theorem \ref{thm0.2}; the key input is Arinkin's theory of Fourier--Mukai transforms for compactified Jacobians of integral locally planar curves, combined with an argument using Adams operations. In Section \ref{Sec:twist} we use twisted sheaves to extend this to families of curves without a section.  In Section \ref{applications}, we explain the proof of Theorem \ref{thm0.4} and give the details of the new proof of Theorem \ref{P=W!}.

\subsection{Acknowledgements}
We would like to thank Giuseppe Ancona, Vicky Hoskins, Daniel Huybrechts, Max Lieblich, Simon Pepin Lehalleur, Weite Pi, and Bin Wang for useful discussions. We are especially grateful to Dima Arinkin for conversations on his work, and to the anonymous referees for careful reading and for numerous suggestions that greatly improved the exposition of the paper. J.S.~was supported by the NSF grant DMS-2301474. Q.Y.~was supported by the NSFC grants 11831013 and 11890661.



\section{Dualizable abelian fibrations} \label{Sec1}

\subsection{Overview}
In this section, we first review the Beauville decomposition for an abelian scheme as a motivating example. Then we introduce the notion of dualizable abelian fibrations which is a suitable framework for generalizing Beauville's theory.

\subsection{The Beauville decomposition}\label{Sec1.2}

To motivate our method, we start with a brief review of the Beauville decomposition \cite{B,DM}. We focus on the homological version for convenience.

Let $\pi: A \to B$ be an abelian scheme of relative dimension $g$ over a nonsingular base variety~$B$. The ``multiplication by $N$'' map $[N]: A \to A$ induces a canonical decomposition of the cohomology
\begin{equation}\label{beau-decomp}
H^*(A, \BQ) = \bigoplus_{i} H^*_{(i)}(A, \BQ),
\end{equation}
which we call the (homological) \emph{Beauville decomposition}. Here 
\[
H^*_{(i)}(A, \BQ) = \{ \alpha \in H^*(A, \BQ) \,|\, [N]^*\alpha = N^i\alpha\}
\]
is an eigenspace of the pullback map $[N]^*:H^*(A, \BQ) \to H^*(A,\BQ)$. The decomposition \eqref{beau-decomp} is motivic (\emph{i.e.}~induced by projectors) and provides a splitting of the Leray filtration\footnote{Here $L_kH^d(A, \BQ) = L^{d-k}H^d(A, \BQ)$ with respect to the usual convention for the Leray filtration.} $L_\bullet H^*(A, \BQ)$ associated with the fibration $\pi: A \to B$, where
\[
L_kH^*(A, \BQ) = \bigoplus_{i \leq k} H^*_{(i)}(A, \BQ).
\]
It is also multiplicative with respect to the cup-product, \emph{i.e.},
\begin{equation} \label{eq:multbeauville}
H^*_{(i)}(A, \BQ) \times H^*_{(i')}(A, \BQ) \xrightarrow{~~\cup~~} H^*_{(i + i')}(A, \BQ).
\end{equation}
In fact, since for $\alpha, \beta \in H^*(A, \BQ)$ we have $[N]^*(\alpha \cup \beta) = [N]^*\alpha \cup [N]^*\beta$, the multiplicativity follows simply by comparing the eigenvalues. For our purpose, we would like to extend (\ref{beau-decomp}) to abelian fibrations with singular fibers. We give another description of (\ref{beau-decomp}) using duality and Fourier transforms.

We consider the dual abelian scheme $\pi^\vee: A^\vee \to B$, and denote by $\CL$ the normalized Poincar\'e line bundle on $A^\vee \times_B A$. Then the correspondence given by the Chern character
\[
\mathrm{ch}(\CL) = \exp( c_1(\CL) )
\]
induces the Fourier transform 
\[
\mathfrak{F}: H^*(A^\vee, \BQ) \to H^*(A, \BQ), \quad \alpha \mapsto p_{2*}(p_1^* \alpha \cup \mathrm{ch}(\CL))
\]
with $p_i$ the natural projections from $A^\vee \times_B A$. The inverse transform $\mathfrak{F}^{-1}$ is induced by $(-1)^g\mathrm{ch}(\CL^\vee)$. Using the Fourier transform $\mathfrak{F}$ and its inverse $\mathfrak{F}^{-1}$, we obtain a canonical decomposition of the cohomology $H^*(A, \BQ)$ which recovers the decomposition (\ref{beau-decomp}). Indeed, for any class $\alpha \in H^d(A, \BQ)$, we have
\[
\mathfrak{F}^{-1}(\alpha) = \sum_{i} \alpha_{(i)}^\vee, \quad  \alpha^\vee_{(i)} \in H^{d + 2g - 2i}(A^\vee, \BQ).
\]
This yields
\begin{equation}\label{FM_decomp}
\alpha = \FF\circ\FF^{-1}(\alpha) = \sum_i \alpha_i, \quad \alpha_i = \mathfrak{F}\left( \alpha^\vee_{(i)}\right) \in H^d_{(i)}(A, \BQ).
\end{equation}
As was explained in \cite{DM}, the decomposition (\ref{FM_decomp}) is motivic with projectors given by the Chern character of the normalized Poincar\'e line bundle $\CL$. Consequently, the Beauville decomposition~(\ref{beau-decomp}) is governed by the normalized Poincar\'e line bundle $\CL$.

Moreover, the multiplicativity \eqref{eq:multbeauville} can be seen from Fourier transforms together with~\eqref{FM_decomp}. For this we use a new product structure on $H^*(A^\vee, \BQ)$:
\[
H^d(A^\vee, \BQ) \times H^{d'}(A^\vee, \BQ) \xrightarrow{~~*~~} H^{d + d' - 2g}(A^\vee, \BQ), \quad (\alpha^\vee, \beta^\vee) \mapsto \mu_*(q_1^*\alpha^\vee \cup q_2^*\beta^\vee),
\]
where $\mu: A^\vee \times_B A^\vee \to A^\vee$ is the addition map and $q_i: A^\vee \times_B A^\vee \to A^\vee$ are the natural projections. We shall call $*$ the \emph{convolution} since it is Fourier-dual to the cup product, \emph{i.e.},
\[
\FF(\alpha^\vee * \beta^\vee) = \FF(\alpha^\vee) \cup \FF(\beta^\vee).
\]
It is also known as the \emph{Pontryagin product} since the definition involves the group structure of~$A^\vee$. Now given two classes $\alpha \in H^d_{(i)}(A, \BQ)$ and $\beta \in H^{d'}_{(i')}(A, \BQ)$, we have by \eqref{FM_decomp}
\[
\FF^{-1}(\alpha) \in H^{d + 2g - 2i}(A^\vee, \BQ), \quad \FF^{-1}(\beta) \in H^{d' + 2g - 2i'}(A^\vee, \BQ),
\]
hence $\FF^{-1}(\alpha) * \FF^{-1}(\beta) \in H^{d + d' + 2g - 2i - 2i'}(A^\vee, \BQ)$. Applying \eqref{FM_decomp} again, we find
\[
\alpha \cup \beta = \FF\left(\FF^{-1}(\alpha) * \FF^{-1}(\beta)\right) \in H^{d + d'}_{(i + i')}(A, \BQ)
\]
which shows the multiplicativity \eqref{eq:multbeauville}. Furthermore, we consider 
\[
\FF_i := \mathrm{ch}_i(\CL): H^d(A^\vee, \BQ) \to H^{d + 2i - 2g}(A, \BQ).
\]
Then \eqref{FM_decomp} also implies that
\[
\FF_i\left(H^*(A^\vee, \BQ)\right) = H^*_{(i)}(A, \BQ),
\]
which serves as a prototype for the ``Perverse $\supset$ Chern'' statement of Theorem \ref{thm0.1}.

\subsection{Terminology and notation}
We fix some terminology and notation in this section to prepare for further discussions. 

\subsubsection{Abelian fibrations}\label{Sec1.3.1} We say that $\pi: M \to B$ is an abelian fibration if both $M$ and $B$ are nonsingular and irreducible, $\pi$ is proper with equidimensional fibers (hence flat), and $M$ contains an open subset $P$ which is a smooth commutative $B$-group scheme $\pi: P \to B$  whose restriction to a certain open subset
\[
\pi_U: P_U \to U \subset B 
\]
is an abelian scheme. 


We say that $\pi^\vee: M^\vee \to B$ is dual to the abelian fibration $\pi: M\to B$, if $\pi^\vee: M^\vee \to B$ is an abelian fibration and there exists an open subset $U\subset B$ over which $\pi$ and $\pi^\vee$ form dual abelian schemes. Denote by $P^\vee \subset M^\vee$ the open commutative $B$-group scheme associated with~$\pi^\vee$. We say that 
\[
{\CP} \in D^b\mathrm{Coh}(M^\vee\times_B M)
\]
is a \emph{normalized Poincar\'e complex}, if the restriction of $\CP$ to $M_U^\vee \times_U M_U$ recovers the normalized Poincar\'e line bundle $\CL$. Here $M_U, M_U^\vee$ are dual abelian schemes over some open $U\subset B$.


\subsubsection{Coherent derived categories} \label{Sec1.3.2}
Here all morphisms are assumed to be proper. For two morphisms 
\[
f_1: X_1 \to B, \quad f_2: X_2 \to B
\]
with $X_1, X_2, B$ nonsingular, an object $\CK \in D^b\mathrm{Coh}(X_1 \times_B X_2)$ induces a Fourier--Mukai transform
\[
\mathrm{FM}_{\CK}: D^b\mathrm{Coh}(X_1) \to D^b\mathrm{Coh}(X_2),\quad F \mapsto p_{2*}(p_1^*F \otimes \CK).
\]
Here $p_i$ are the natural projections from $X_1\times_BX_2$ and throughout this paper all functors between coherent (resp.~constructible) categories are derived.

For $f_3: X_3 \to B$ with $X_3$ nonsingular, the composition of two objects 
\[
\CK \in D^b\mathrm{Coh}(X_1\times_B X_2), \quad \CK' \in D^b\mathrm{Coh}(X_2\times_BX_3)
\]
is defined under certain flatness assumptions. For example, if $f_1$ and $f_3$ are both flat then
\begin{equation} \label{eq:compFM}
\CK' \circ \CK := p_{13*}\delta^*(\CK \boxtimes \CK' ) \in D^b\mathrm{Coh}(X_1\times_B X_3),
\end{equation}
where $p_{13}$ is the projection to the first and the third factors and
\[
\delta^*: D^b\mathrm{Coh}((X_1\times_BX_2) \times (X_2\times_BX_3)) \to D^b\mathrm{Coh}(X_1\times_B X_2\times_BX_3 )
\]
is the pullback along the regular embedding $X_1\times_B X_2\times_BX_3 \hookrightarrow (X_1\times_BX_2) \times (X_2\times_BX_3)$ base-changed from $\Delta_{X_2}: X_2\hookrightarrow X_2\times X_2$.\footnote{Without flatness (or derived algebraic geometry), the object $\CK' \circ \CK$ defined above may be unbounded or ill-behaved.} In this case the composition of the two Fourier--Mukai transforms
\[
\mathrm{FM}_\CK: D^b\mathrm{Coh}(X_1) \to D^b\mathrm{Coh}(X_2), \quad \mathrm{FM}_{\CK'}: D^b\mathrm{Coh}(X_2) \to D^b\mathrm{Coh}(X_3)
\]
is induced by the Fourier--Mukai kernel $\CK' \circ \CK$.

\begin{rmk}
We also note that when descended to $K$-theory, compositions make sense without flatness and for arbitrary objects
\[
\CK \in K_*(X_1\times_B X_2), \quad \CK' \in K_*(X_2\times_BX_3).
\]
Here $K_*(-)$ stands for the Grothendieck group of coherent sheaves. We define
\begin{equation} \label{eq:compFMK}
\CK' \circ \CK := p_{13*}\delta^!( \CK \boxtimes \CK' ) \in K_*(X_1\times_B X_3),
\end{equation}
where
\[
\delta^!: K_*((X_1\times_BX_2) \times (X_2\times_BX_3)) \to K_*(X_1\times_B X_2\times_BX_3 )
\]
is the refined Gysin pullback with respect to the regular embedding $\Delta_{X_2}: X_2\hookrightarrow X_2\times X_2$; see \cite[Section 3]{AP}. The compatibility between \eqref{eq:compFM} and \eqref{eq:compFMK} (when the former is well-defined) is straightforward.
\end{rmk}

For any nonsingular $B$-scheme $X$, the identity $\mathrm{id}_{D^b\mathrm{Coh}(X)}: D^b\mathrm{Coh}(X) \to D^b\mathrm{Coh}(X)$ is induced by the structure sheaf of the relative diagonal $\CO_{\Delta_{X/B}} \in D^b\mathrm{Coh}(X \times_B X)$. We say that two~objects
\[
\CK \in D^b\mathrm{Coh}(X_1 \times_B X_2), \quad \CK' \in D^b\mathrm{Coh}(X_2 \times_B X_1)
\]
are \emph{inverse}, if 
\[
\CK' \circ \CK \simeq \CO_{\Delta_{X_1/B}}, \quad \CK \circ \CK' \simeq \CO_{\Delta_{X_2/B}}.
\]
Consequently, the Fourier--Mukai transforms $\mathrm{FM}_{\CK}$ and $\mathrm{FM}_{\CK'}$ are both derived equivalences which are inverse to each other.

We say (by abuse of notation) that an object $F \in D^b\mathrm{Coh}(X)$ is of codimension $c$, if the support of every nontrivial cohomology $\CH^i(F)$ is of codimension $\geq c$.

\subsubsection{Constructible derived categories}\label{1.3.3}

Let $f: X \to B$ be a proper morphism between nonsingular varieties with equidimensional fibers. By the decomposition theorem \cite{BBD}, we have
\[
f_* \BQ_X \simeq \bigoplus_{i} {^\mathfrak{p}}\CH^i(f_* \BQ_X)[-i] \in D^b_c(B)
\]
where $D^b_c(-)$ stands for the bounded derived category of constructible sheaves and all functors are derived. Each perverse cohomology on the right-hand side is a semisimple perverse sheaf. We say that $f$ has \emph{full support}, if each simple summand of ${^\mathfrak{p}}\CH^i(f_* \BQ_X)$ has support $B$.

Next, we introduce the main characters of this paper --- perverse filtrations. The sequence of perverse truncation functors yield morphisms
\[
{^\mathfrak{p}}\tau_{\leq k+\dim B} f_* \BQ_X \to f_*\BQ_X
\]
which induces both the \emph{global} and the \emph{local} perverse filtrations.\footnote{Here we are following the indexing convention of \cite{dCHM1} -- the resulting perverse filtrations begin from $P_0$.} More precisely, the global perverse filtration on the total cohomology of $X$ is given by
\[
P_kH^d(X, \BQ) : = \mathrm{Im}\left\{H^{d}(B, {^\mathfrak{p}}\tau_{\leq k+\dim B} f_* \BQ_X) \to H^d(X, \BQ) \right\},
\]
and, for a closed point $b \in B$, the local perverse filtration on the cohomology of the closed fiber $X_b$ is given by
\[
P_{k}H^d(X_b, \BQ)  := \CH^d({^\mathfrak{p}}\tau_{\leq k+\dim B} f_* \BQ_X)_b.
\]
\emph{A priori}, the local perverse filtration for $X_b$ depends on the total family $f: X\to B$.

We say that a class $\alpha \in H^d(X, \BQ)$ has \emph{perversity} $k$ if 
\[
\alpha \in P_kH^d(X, \BQ).
\]


\subsection{Dualizable abelian fibrations}\label{Sec1.4}

Dualizable abelian fibrations are extensions of abelian schemes involving singular fibers, for which most properties given by the Fourier transform described in Section \ref{Sec1.2} may be generalized.


\begin{defn}[Dualizable abelian fibration]
Let $\pi: M \to B$ be an abelian fibration of relative dimension $g$, \emph{i.e.}~$g = \dim M - \dim B$.
We say that $\pi: M\to B$ is a dualizable abelian fibration, if it has a dual abelian fibration $\pi^\vee: M^\vee \to B$ (see Section \ref{Sec1.3.1})  satisfying the following.

\begin{enumerate}
    \item[(a1)] (Poincar\'e) There is a normalized Poincar\'e complex~${\CP} \in D^b\mathrm{Coh}(M^\vee \times_B M)$ which admits an inverse ${\CP}^{-1} \in D^b\mathrm{Coh}(M \times_B M^\vee)$, \emph{i.e.},
    \begin{equation} \label{eq:dual}
    {\CP}^{-1} \circ {\CP} \simeq \CO_{\Delta_{M^\vee/B}}, \quad   {\CP}\circ \CP^{-1} \simeq \CO_{\Delta_{M/B}}.
    \end{equation}
    \item[(a2)] (Convolution) There is an object $\CK \in D^b\mathrm{Coh}(M^\vee \times_B M^\vee \times_B M^\vee)$ of codimension $g$ (see Section \ref{Sec1.3.2} for the definition of codimension), which satisfies
    \begin{equation}\label{convolution}
    {\CP} \circ \CK \simeq \CO_{\Delta^\mathrm{sm}_{M/B}}\circ ({\CP} \boxtimes {\CP} ) \in D^b\mathrm{Coh}(M^\vee \times_B M^\vee \times_B M).
    \end{equation}
Here $\Delta^\mathrm{sm}_{M/B}$ is the small diagonal of the relative triple product $M\times_B M \times_B M$, and the notation $\CO_{\Delta^\mathrm{sm}_{M/B}}\circ ({\CP} \boxtimes {\CP} )$ stands for composing $\CP$ with $\CO_{\Delta^\mathrm{sm}_{M/B}}$ via the first (resp.~second) factor of $M \times_B M \times_B M$. We shall refer to $\CK$ as the \emph{convolution kernel} as it induces a product structure on $D^b\mathrm{Coh}(M^\vee)$ which is Fourier--Mukai-dual to the tensor product of $D^b\mathrm{Coh}(M)$.
    \item[(b)] (Support) The morphism $\pi$ has full support.
\end{enumerate}
\end{defn}

Here (a1, a2) serve as the detailed version of (a) in Section \ref{Sec0.2}. We list some consequences of (a1, a2) in terms of Fourier--Mukai transforms. Using (a1), we can define the Fourier--Mukai transforms $\mathrm{FM}_{\CP}: D^b\mathrm{Coh}(M^\vee) \to D^b\mathrm{Coh}(M)$ and its inverse
\[
\mathrm{FM}^{-1}_{\CP} = \mathrm{FM}_{{\CP}^{-1}}: D^b\mathrm{Coh}(M) \to D^b\mathrm{Coh}(M^\vee).
\]
Next, we define a convolution product via $\CK$ in (a2):
\[
\ast: D^b\mathrm{Coh}(M^\vee) \times D^b\mathrm{Coh}(M^\vee) \to D^b\mathrm{Coh}(M^\vee), \quad F_1 \ast F_2 = p_{3*}( p_1^*F_1 \otimes p_2^* F_2 \otimes \CK).
\]
Then (\ref{convolution}) implies the interaction between Fourier--Mukai transforms and the convolution:
\begin{equation*}\label{FM_Conv}
\mathrm{FM}_{\CP}(F_1) \otimes\mathrm{FM}_{\CP}(F_2) \simeq \mathrm{FM}_{\CP}(F_1 \ast F_2),\quad F_1,~F_2 \in D^b\mathrm{Coh}(M^\vee).
\end{equation*}

We also note that dualizable abelian fibrations extend abelian schemes.

\begin{prop}\label{prop1.2}
An abelian scheme $\pi: A\to B$ is dualizable in the sense above.
\end{prop}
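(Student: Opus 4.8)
The plan is to name the data witnessing dualizability and then verify (a1), (a2), (b) in turn; for an abelian scheme each is classical. Take $\pi^\vee\colon A^\vee\to B$ to be the dual abelian scheme (itself trivially an abelian fibration, with $P^\vee=A^\vee$ over $U=B$, and dual to $\pi$ over all of $B$), and let $\CL$ be the normalized Poincar\'e bundle on $A^\vee\times_B A$. Since $\CL$ is a line bundle restricting to the normalized Poincar\'e bundle on every geometric fiber, $\CP:=\CL$ is a Poincar\'e complex in the sense of Section~\ref{Sec1.3.1}.

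For (a1): by Mukai's theorem, applied over the base $B$, the Fourier--Mukai functor $\mathrm{FM}_{\CP}$ is an equivalence, with inverse $\mathrm{FM}_{\CP^{-1}}$ where $\CP^{-1}$ is the Poincar\'e bundle transposed to $A\times_B A^\vee$, pulled back along $[-1]$ on the $A$-factor, and shifted by $[g]$. Concretely this says $\CP^{-1}\circ\CP\simeq\CO_{\Delta_{A^\vee/B}}$ and $\CP\circ\CP^{-1}\simeq\CO_{\Delta_{A/B}}$, which is precisely (a1). Only the statement over the nonsingular base $B$ is needed, and it follows from the absolute case by flat base change, or directly from the relative Fourier--Mukai formalism.

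For (a2): take $\CK:=\CO_{\Gamma}$, the structure sheaf of the graph $\Gamma\subset A^\vee\times_B A^\vee\times_B A^\vee$ of the relative group law $m\colon A^\vee\times_B A^\vee\to A^\vee$. Then $\Gamma\cong A^\vee\times_B A^\vee$ is smooth over $B$ of relative dimension $2g$, so $\CK$ has codimension $g$ inside the relative triple product, as required. To verify (\ref{convolution}) I would unwind both sides as objects on $A^\vee\times_B A^\vee\times_B A$: using the isomorphism $\Gamma\times_B A\xrightarrow{\ \sim\ }A^\vee\times_B A^\vee\times_B A$, the left-hand side $\CP\circ\CK$ becomes the pullback $(m\times 1_A)^*\CL$, while gluing the two $A$-factors of $\CP\boxtimes\CP$ along the small diagonal produces, on the right-hand side, $p_{13}^*\CL\otimes p_{23}^*\CL$ (the projections being to the two copies of $A^\vee\times_B A$). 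These two line bundles are canonically identified by the biextension (bilinearity) property of the normalized Poincar\'e bundle, i.e.\ the theorem of the cube. Equivalently, one may phrase this at the level of functors: both kernels induce $(F_1,F_2)\mapsto \mathrm{FM}_{\CP}(F_1)\otimes\mathrm{FM}_{\CP}(F_2)$, respectively via $\mathrm{FM}_{\CP}(F_1\ast F_2)$ with $\ast$ the relative Pontryagin product, and directly, and the exchange of Pontryagin product and tensor product under Fourier--Mukai furnishes the isomorphism of kernels.

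Finally (b) is immediate: $\pi$ is smooth and proper, so Deligne's degeneration gives $R\pi_*\BQ_A\simeq\bigoplus_i R^i\pi_*\BQ_A[-i]$ with each $R^i\pi_*\BQ_A$ a local system on the nonsingular irreducible variety $B$; hence every ${}^{\mathfrak{p}}\CH^j(R\pi_*\BQ_A)$ is a shifted local system, all of whose simple summands are intermediate extensions with full support $B$, so $\pi$ has full support. The only step that is not pure bookkeeping is (a2): one must carefully match the composition conventions of Section~\ref{Sec1.3.2} on the two sides of (\ref{convolution}) and then invoke the theorem of the cube, choosing the normalization of $\CL$ so that no correction line bundle pulled back from $B$ appears. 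I expect that comparison --- rather than anything genuinely deep --- to be the main point.
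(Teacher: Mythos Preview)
Your proposal is correct and follows essentially the same approach as the paper: both take $\CP=\CL$, cite Mukai's equivalence for (a1), take $\CK$ to be the structure sheaf of the graph of the group law on $A^\vee$ (your $\Gamma$ is the paper's locus $\{l_1\otimes l_2\simeq l_3\}$) and note it has codimension~$g$, and observe for~(b) that the $R^i\pi_*\BQ_A$ are local systems on the irreducible base. The only difference is that you spell out the theorem-of-the-cube verification of~(\ref{convolution}) explicitly, whereas the paper simply refers to~\cite{Mukai} for the convolution properties.
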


\begin{proof}
The abelian schemes $\pi: A \to B$ and $\pi^\vee: A^\vee \to B$ form dual abelian fibrations with $\CP$ given by the normalized Poincar\'e line bundle $\CL$. The properties concerning the convolution were proven in \cite[(3.7)]{Mukai}. In particular, restricting over a point $b\in B$ with fiber $A^\vee_b = \mathrm{Pic}^0(A_b)$, the convolution kernel is the structure sheaf of the locus
\[
\{(l_1,l_2,l_3)\in \mathrm{Pic}^0(A_b)^{\times 3}\,|\, l_1 \otimes l_2 \simeq l_3\}.
\]
This is clearly a codimension $g$ locus over $B$. Finally, we note that each simple summand of~$R^i\pi_* \BQ_A$ is a local system with support $B$, which guarantees (b).
\end{proof}

Another class of dualizable fibrations is given by compactified Jacobian fibrations, which we will discuss systematically in Section \ref{comp_jac}. Arinkin's theory of the Poincar\'e sheaf \cite{A1, A2} plays a crucial role. We will also treat twisted compactified Jacobian fibrations in Section \ref{Sec:twist}. 


\section{Multiplicativity and perversity}\label{Sec2}

\subsection{Overview}
In this section, we apply Fourier transforms to study the perverse filtrations for dualizable abelian fibrations. After the necessary preparations, we state our main result in Theorem~\ref{thm:main} which is an extension of the Beauville decomposition and a motivic enhancement of the multiplicativity and the ``Perverse $\supset$ Chern'' statements of Theorem \ref{thm0.1}. Along the way, we note in Corollary \ref{motivic_decomp} that our approach actually confirms the motivic decomposition conjecture of Corti--Hanamura \cite{CH} for a dualizable abelian fibration satisfying the Fourier vanishing. The proofs are given in Section \ref{Sec2.5}.

\subsection{Relative correspondences}
As our strategy involves passing from coherent derived categories to cycle classes, we find it convenient to use the language of \emph{relative Chow motives} developed by Corti--Hanamura \cite{CH}. This notion has its advantage in the various realizations: Chow groups, mixed Hodge modules, constructible sheaves, and particularly, global cohomology equipped with mixed Hodge structures. Some of these realizations are relevant to our results while others might draw potential interests. We begin with a quick review of the theory.

\subsubsection{Relative Chow motives} \label{Sec2.2.1}
We fix a nonsingular base variety $B$. All Chow groups $\Chow_*(-)$ are taken with $\BQ$-coefficients. Let $f_1: X_1 \to B, f_2: X_2 \to B$ be two proper morphisms with $X_1, X_2$ nonsingular. If $X_2$ is equidimensional,\footnote{If $X_2$ is not equidimensional, we decompose $X_2 = \sqcup_\alpha X_{2, \alpha}$ and define the group of correspondences accordingly.} we define the group of degree $k$ relative correspondences between $X_1$ and $X_2$ to be
\begin{equation*} \label{eq:corr}
\Corr^k_B(X_1, X_2) := \Chow_{\dim X_2 - k} (X_1 \times_B X_2).
\end{equation*}
Compositions of correspondences are similar to the compositions of Fourier--Mukai kernels in~\eqref{eq:compFMK}. For $f_3: X_3 \to B$ proper with $X_3$ nonsingular, and
\[
\Gamma \in \Corr_B^k(X_1, X_2), \quad \Gamma' \in \Corr_B^{k'}(X_2, X_3),
\]
we set
\begin{equation} \label{eq:compCH}
\Gamma'\circ\Gamma := p_{13*}\delta^!(\Gamma \times \Gamma') \in \Corr_B^{k + k'}(X_1, X_3),
\end{equation}
where $p_{13}$ is the projection to the first and the third factors and
\[
\delta^! : \Chow_*((X_1 \times_B X_2) \times (X_2 \times_B X_3)) \to \Chow_{* - \dim X_2} (X_1 \times_B X_2 \times_B X_3)
\]
is the refined Gysin pullback with respect to the regular embedding $\Delta_{X_2} : X_2 \hookrightarrow X_2 \times X_2$. For $X$ nonsingular and proper over $B$, a self-correspondence $\mathfrak{p} \in \Corr_B^0(X, X)$ is a \emph{projector} if $\mathfrak{p} \circ \mathfrak{p} = \mathfrak{p}$.

The category of relative Chow motives over $B$, denoted $\mathrm{CHM}(B)$, consists of triples $(X, \mathfrak{p}, m)$ where $X$ is nonsingular and proper over $B$, $\mathfrak{p} \in \Corr^0_B(X, X)$ is a projector, and $m$ is an integer taking care of Tate twists. Typical examples are given by the motive of $X$, \emph{i.e.},
\[
h(X) := (X, [\Delta_{X/B}], 0).
\]
Morphisms between $(X_1, \mathfrak{p}, m)$ and $(X_2, \mathfrak{q}, n)$ are defined by
\[
\mathrm{Hom}_{\mathrm{CHM}(B)}((X_1, \mathfrak{p}, m), (X_2, \mathfrak{q}, n)) := \mathfrak{q} \circ \Corr^{n - m}_B(X_1, X_2) \circ \mathfrak{p}.
\]
The category $\mathrm{CHM}(B)$ is additive and pseudo-abelian by construction. Note that however, it admits no tensor product as relative products of nonsingular $B$-schemes are often singular.

\subsubsection{Homological realizations}

In this paper we are mostly concerned with homological realizations of $\mathrm{CHM}(B)$.\footnote{We refer to \cite{GHM} for the Chow realization.} A key observation in \cite{CH} is that if $f_1 : X_1 \to B$ and $f_2: X_2 \to B$ are two morphisms with $f_1$ proper and $X_2$ nonsingular, then there is a natural isomophism
\begin{equation} \label{eq:bm}
\phi: \Hom_{D^b_c(B)}(f_{1*}\BQ_{X_1}[i], f_{2*}\BQ_{X_2}[j]) \xrightarrow{~~\simeq~~} H^{\mathrm{BM}}_{2\dim X_2 + i - j}(X_1 \times_B X_2, \BQ). 
\end{equation}

The isomorphism $\phi$ is compatible with compositions on both sides in the following sense. For $i = 1, 2, 3$, let $f_i: X_i \to B$ be morphisms with $f_1, f_2$ proper and $X_2, X_3$ nonsingular. Given two morphisms in $D^b_c(B)$:
\[
u: f_{1*}\BQ_{X_1}[i] \to f_{2*}\BQ_{X_2}[j], \quad v: f_{2*}\BQ_{X_2}[j] \to f_{3*}\BQ_{X_3}[k],
\]
we have
\begin{equation} \label{eq:compcomp}
\phi(v \circ u) = \phi(v) \circ \phi(u) \in H^{\mathrm{BM}}_{2\dim X_3 + i - k}(X_1 \times_B X_3, \BQ)
\end{equation}
where the definition of compositions in Borel--Moore homology is identical to \eqref{eq:compCH}; see \cite[Lemmas~2.21 and 2.23]{CH}.

Consequently, by the cycle class map
\[
\mathrm{cl}: \Chow_*(-) \to H_{2*}^{\mathrm{BM}}(-, \BQ)
\]
and by showing that $D^b_c(B)$ is pseudo-abelian \cite[Lemma 2.24]{CH}, Corti--Hanamura obtained a well-defined realization functor
\begin{equation} \label{eq:real}
\mathrm{CHM}(B) \to D^b_c(B)
\end{equation}
sending $(f: X \to B, \mathfrak{p}, m)$ to $\mathfrak{p}_*(f_*\BQ_X[2m])$. The functor further specializes by taking global cohomology $H^*(-)$.

\subsubsection{Multiplicative structure} \label{Sec2.2.3}
We now discuss the multiplicative structure of the motive $h(X) = (f: X \to B, [\Delta_{X/B}], 0)$ and its realizations. With no tensor product in $\mathrm{CHM}(B)$, we define manually a binary operation of motives as follows. For $i = 1, 2, 3$, let $f_i: X_i \to B$ be proper morphisms with $X_i$ nonsingular. If $X_3$ is equidimensional, we define the group of degree $k$ binary relative correspondences
\[
\Corr_B^k(X_1, X_2; X_3) := \Chow_{\dim X_3 - k} (X_1 \times_B X_2 \times_B X_3).
\]
For $i = 1, 2, 3$, let $g_i: Y_i \to B$ be three more proper morphisms with $Y_i$ nonsingular. Given four correspondences
\[
\Gamma_1 \in \Corr_B^{k_1}(Y_1, X_1), \quad \Gamma_2 \in \Corr_B^{k_2}(Y_2, X_2), \quad \Gamma_3 \in \Corr_B^{k_3}(X_3, Y_3), \quad \Gamma \in \Corr_B^{k}(X_1, X_2; X_3),
\]
there is the composition
\[
\Gamma_3 \circ \Gamma \circ (\Gamma_1 \times \Gamma_2) \in \Corr_B^{k + k_1 + k_2 + k_3}(Y_1, Y_2; Y_3).
\]
Here the notation stands for composing $\Gamma_1, \Gamma_2$ with $\Gamma$ via the $X_1$ and $X_2$ factors, and then with $\Gamma_3$ via the $X_3$ factor, which makes sense since $X_1, X_2, X_3$ are all nonsingular. A binary morphism of motives
\begin{equation} \label{eq:binary}
(X_1, \mathfrak{p}, m) \times (X_2, \mathfrak{q}, n) \to (X_3, \mathfrak{r}, p)
\end{equation}
is then given by a binary relative correspondence
\[
\mathfrak{r} \circ \Gamma \circ (\mathfrak{p} \times \mathfrak{q}), \quad \Gamma \in \Corr_B^{p - m - n}(X_1, X_2; X_3).
\]

\begin{rmk}
One can similarly define multinary morphisms of relative Chow motives, or go even further to equip $\mathrm{CHM}(B)$ with the structure of a \emph{pseudo-tensor category} in the sense of Beilinson--Drinfeld \cite[Section 1.1]{BD}. But for our purpose we shall concentrate on the usual morphisms and the binary ones.
\end{rmk}

To see the relation of \eqref{eq:binary} with the realization functor \eqref{eq:real}, we observe that by \eqref{eq:bm} composed with the K\"unneth formula, there is a natural isomorphism
\[
\Phi: \Hom_{D^b_c(B)}(f_{1*}\BQ_{X_1}[i] \otimes f_{2*}\BQ_{X_2}[j], f_{3*}\BQ_{X_3}[k]) \xrightarrow{~~\sim~~} H^{\mathrm{BM}}_{2\dim X_3 + i + j - k}(X_1 \times_B X_2 \times_B X_3, \BQ).
\]
The compatibility of $\Phi$ with compositions on both sides is summarized in the following lemma. We include a short proof as we could not find it in the literature.

\begin{lem}
For $i = 1, 2, 3$, let $f_i : X_i \to B$ be as above and let $g_i : Y_i \to B$ be morphisms with $g_1, g_2$ proper and $Y_3$ nonsingular. Given four morphisms in $D^b_c(B)$:
\begin{gather*}
u: g_{1*}\BQ_{Y_1}[i'] \to f_{1*}\BQ_{X_1}[i], \quad v: g_{2*}\BQ_{Y_2}[j'] \to f_{2*}\BQ_{X_2}[j], \quad w: f_{3*}\BQ_{X_3}[k] \to g_{3*}\BQ_{Y_3}[k'],\\
T: f_{1*}\BQ_{X_1}[i] \otimes f_{2*}\BQ_{X_2}[j] \to f_{3*}\BQ_{X_3}[k],
\end{gather*}
we have
\[
\Phi(w \circ T \circ(u \otimes v)) = \phi(w) \circ \Phi(T) \circ(\phi(u) \times \phi(v)) \in H^{\mathrm{BM}}_{2\dim Y_3 + i' + j' - k'}(Y_1 \times_B Y_2 \times_B Y_3, \BQ).
\]
\end{lem}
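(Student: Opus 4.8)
The plan is to reduce the compatibility statement for the binary operation $\Phi$ to the already-established unary compatibility \eqref{eq:compcomp} for $\phi$, by rewriting the K\"unneth-enhanced Hom-space in terms of a genuine morphism in $D^b_c(B)$ out of a fiber product and invoking the isomorphism $\phi$ directly.

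\begin{proof}[Proof sketch]
The key point is that the K\"unneth isomorphism lets us identify $f_{1*}\BQ_{X_1}[i]\otimes f_{2*}\BQ_{X_2}[j]$ with the derived pushforward of the constant sheaf along the fiber product. Concretely, writing $f_{12}:X_1\times_B X_2\to B$ for the structure map, proper base change and the projection formula give a canonical isomorphism
\[
f_{1*}\BQ_{X_1}\otimes f_{2*}\BQ_{X_2}\;\simeq\; f_{12*}\BQ_{X_1\times_B X_2}
\]
in $D^b_c(B)$, under which $\Phi$ becomes the instance of $\phi$ for the pair of maps $f_{12}:X_1\times_B X_2\to B$ and $f_3:X_3\to B$:
\[
\Phi\colon \Hom_{D^b_c(B)}\!\big(f_{12*}\BQ_{X_1\times_B X_2}[i+j],\,f_{3*}\BQ_{X_3}[k]\big)\;\xrightarrow{\ \sim\ }\;H^{\mathrm{BM}}_{2\dim X_3+i+j-k}\big((X_1\times_B X_2)\times_B X_3,\BQ\big),
\]
and $(X_1\times_B X_2)\times_B X_3=X_1\times_B X_2\times_B X_3$ on the nose. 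Thus $\Phi=\phi$ after this identification, so the lemma is not a genuinely new statement but a bookkeeping consequence of \eqref{eq:bm} and \eqref{eq:compcomp} once the three input morphisms are organized correctly.

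First I would set up this identification carefully and note its naturality in $X_1,X_2$, so that the morphism $u\otimes v\colon g_{1*}\BQ_{Y_1}[i']\otimes g_{2*}\BQ_{Y_2}[j']\to f_{1*}\BQ_{X_1}[i]\otimes f_{2*}\BQ_{X_2}[j]$ is carried to a morphism $(u\boxtimes_B v)\colon g_{12*}\BQ_{Y_1\times_B Y_2}[i'+j']\to f_{12*}\BQ_{X_1\times_B X_2}[i+j]$, and I would check that $\phi(u\boxtimes_B v)=\phi(u)\times\phi(v)\in H^{\mathrm{BM}}_{\bullet}\big((Y_1\times_B Y_2)\times_B(X_1\times_B X_2),\BQ\big)$; this is the only place a small computation is needed, and it amounts to unwinding the definition of $\phi$ (which itself factors through a K\"unneth/external product construction) plus the fact that the external product of cycle classes corresponds to the external product of correspondences. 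Then $w\circ T\circ(u\otimes v)$ is, after the identification, precisely $w\circ T\circ (u\boxtimes_B v)$ viewed as a composite of three honest morphisms in $D^b_c(B)$, so \eqref{eq:compcomp} applied twice yields
\[
\phi\big(w\circ T\circ(u\boxtimes_B v)\big)=\phi(w)\circ\phi(T)\circ\phi(u\boxtimes_B v)=\phi(w)\circ\Phi(T)\circ\big(\phi(u)\times\phi(v)\big),
\]
which is the claim, provided one checks that the composition conventions (the refined Gysin pullbacks $\delta^!$ along the relevant diagonals) on the two sides match — i.e.\ that composing with a ``boxed'' correspondence $\phi(u)\times\phi(v)$ over the singular base $X_1\times_B X_2$ agrees with the two separate compositions over $X_1$ and $X_2$.

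The main obstacle I anticipate is precisely this last compatibility of composition conventions: the binary composition $\Gamma_3\circ\Gamma\circ(\Gamma_1\times\Gamma_2)$ is defined by composing with $\Gamma_1,\Gamma_2$ ``via the $X_1$ and $X_2$ factors'' separately, which uses the regular embeddings $\Delta_{X_1},\Delta_{X_2}$ individually, whereas the reduction above composes with a single correspondence $\phi(u)\times\phi(v)$ over $X_1\times_B X_2$ using $\Delta_{X_1\times_B X_2}$. These agree because $\Delta_{X_1\times_B X_2}$ factors as a fiber product of $\Delta_{X_1}$ and $\Delta_{X_2}$ (base-changed appropriately) and refined Gysin pullbacks are functorial and compatible with such products; making this precise requires invoking the standard excess-intersection/functoriality properties of refined Gysin homomorphisms (as in \cite{CH}), but it involves no new idea beyond the techniques already used there. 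Everything else is formal once the K\"unneth identification $f_{1*}\BQ_{X_1}\otimes f_{2*}\BQ_{X_2}\simeq f_{12*}\BQ_{X_1\times_B X_2}$ is in place.
\end{proof}
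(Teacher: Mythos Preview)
Your reduction via K\"unneth is exactly how the paper handles the post-composition with $w$: viewing $X_1\times_B X_2\to B$ as a single source and applying \eqref{eq:compcomp} with middle variety $X_3$ and target $Y_3$ (both nonsingular) gives $\Phi(w\circ T)=\phi(w)\circ\Phi(T)$ immediately. The gap is in the pre-composition with $u\boxtimes_B v$. You invoke \eqref{eq:compcomp} ``twice,'' but the second application would have $X_1\times_B X_2$ as the \emph{middle} variety, and \eqref{eq:compcomp} explicitly requires the middle variety to be nonsingular so that its diagonal is a regular embedding and $\delta^!$ is defined. Since $X_1\times_B X_2$ is typically singular, this application is illegitimate. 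For the same reason, the isomorphism \eqref{eq:bm} defining $\phi(u\boxtimes_B v)$ is not available: the target there is $X_1\times_B X_2$. The obstacle you flag --- matching the two composition conventions via functoriality of refined Gysin maps --- is downstream of this; the more basic problem is that neither side of your equation $\phi(T\circ(u\boxtimes_B v))=\phi(T)\circ\phi(u\boxtimes_B v)$ is defined by the results you cite.

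The paper avoids this by decomposing the statement into three separate compatibilities, one for each of $u,v,w$. The $w$ case is as above. For $\Phi(T\circ u)=\Phi(T)\circ\phi(u)$ the paper observes that \eqref{eq:compcomp} again fails directly (now because the would-be target $X_2\times_B X_3$ may be singular), and instead reopens the proof of \eqref{eq:compcomp} from \cite[Section~3]{CH}: one passes by adjunction to morphisms in $D^b_c(X_1)$, rewrites $f_1^!R\mathcal{H}\mathrm{om}(f_{2*}\BQ_{X_2},f_{3*}\BQ_{X_3})$ via a chain of base-change and duality isomorphisms as $p^{123}_{1*}p^{123!}_1\BQ_{X_1}[\cdots]$, and then applies \cite[Lemma~3.1]{CH} together with the diagram chase of \cite[Proof of Lemma~2.23]{CH}. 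This is where the actual content lies; your sketch replaces it with an appeal to ``functoriality of refined Gysin homomorphisms,'' which if made precise would amount to reproving the needed extension of \eqref{eq:compcomp}.
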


\begin{proof}
We decompose the lemma into three statements:
\begin{gather}
\Phi(T \circ u) = \Phi(T) \circ \phi(u), \label{eq:u} \\
\Phi(T \circ v) = \Phi(T) \circ \phi(v), \nonumber \\
\Phi(w \circ T) = \phi(w) \circ \Phi(T). \label{eq:w}
\end{gather}
The statement \eqref{eq:w} follows from \eqref{eq:compcomp} by viewing $X_1 \times_B X_2 \to B$ as a whole and by K\"unneth. Indeed, the assumptions of \eqref{eq:compcomp} are satisfied since $Y_3$ is nonsingular. For symmetry reasons it remains to prove \eqref{eq:u}. Note that \eqref{eq:compcomp} does not apply directly as $X_2 \times_B X_3$ may be singular.

The strategy is to go through the actual proof of \eqref{eq:compcomp} in \cite[Section 3]{CH} and reduce to the technical Lemma 3.1 therein. We first remark that $u$ corresponds via adjunction and proper base change (since $g_1$ is proper) to a morphism in $D^b_c(X_1)$:
\[
u': p_*\BQ_{Y_1 \times_B X_1}[i'] \to \BQ_{X_1}[i],
\]
where $p: Y_1 \times_B X_1 \to X_1$ is the second projection. Also by adjunction $T$ corresponds to a morphism in $D^b_c(X_1)$:
\begin{equation} \label{eq:T'}
\BQ_{X_1}[i] \to f_1^!R\mathcal{H}\mathrm{om}(f_{2*}\BQ_{X_2}[j], f_{3*}\BQ_{X_3}[k]).
\end{equation}
We have natural isomorphisms
\begin{equation} \label{eq:adj}
\begin{aligned}
\phantom{\simeq{}} & f_1^!R\mathcal{H}\mathrm{om}(f_{2*}\BQ_{X_2}[j], f_{3*}\BQ_{X_3}[k]) \\
\simeq{} & R\mathcal{H}\mathrm{om}\left(f_1^*f_{2*}\BQ_{X_2}[j], f_1^!f_{3*}\BQ_{X_3}[k]\right) \\
\textrm{($f_2$ is proper)} \simeq{} & R\mathcal{H}\mathrm{om}\left(p^{12}_{1*}\BQ_{X_1 \times_B X_2}[j], p^{13}_{1*}p^{13!}_3\BQ_{X_3}[k]\right) \\
\textrm{($X_3$ is nonsingular)} \simeq{} & R\mathcal{H}\mathrm{om}\left(p^{12}_{1*}\BQ_{X_1 \times_B X_2}[j], p^{13}_{1*}\omega_{X_1 \times_B X_3}[k - 2\dim X_3]\right) \\
\textrm{($X_1$ is nonsingular)} \simeq{} & R\mathcal{H}\mathrm{om}\left(p^{12}_{1*}\BQ_{X_1 \times_B X_2}[j], p^{13}_{1*}p^{13!}_1\BQ_{X_1}[k + 2\dim X_1 - 2\dim X_3]\right) \\
\textrm{($p^{12}_{1}$ is proper)} \simeq{} & p^{12}_{1*}R\mathcal{H}\mathrm{om}\left(\BQ_{X_1 \times_B X_2}[j], p^{12!}_{1}p^{13}_{1*}p^{13!}_1\BQ_{X_1}[k + 2\dim X_1 - 2\dim X_3]\right) \\
\simeq{} & p^{12}_{1*}\left(p^{12!}_{1}p^{13}_{1*}\right)p^{13!}_1\BQ_{X_1}[k -j + 2\dim X_1 - 2\dim X_3] \\
\simeq{} & p^{12}_{1*}\left(p^{123}_{12*}p^{123!}_{13}\right)p^{13!}_1\BQ_{X_1}[k - j + 2\dim X_1 - 2\dim X_3] \\
\simeq{} & p^{123}_{1*}p^{123!}_{1}\BQ_{X_1}[k - j + 2\dim X_1 - 2\dim X_3],
\end{aligned}
\end{equation}
where the $p^-_-$ stand for the various projections from relative products of $X_i$. We write
\[
T': \BQ_{X_1}[i] \to q_*q^!\BQ_{X_1}[k - j + 2\dim X_1 - 2\dim X_3]
\]
for the morphism in $D^b_c(X_1)$ resulted from \eqref{eq:T'} and \eqref{eq:adj}, with $q = p^{123}_{1}: X_1 \times_B X_2 \times_B X_3 \to X_1$ the projection to the first factor.

Finally, we obtain \eqref{eq:u} by applying \cite[Lemma 3.1]{CH} to $u'$ and $T'$ together with an identical diagram chase as in \cite[Proof of Lemma 2.23]{CH}.
\end{proof}

In particular, the binary morphism \eqref{eq:binary} specializes to a well-defined morphism in $D^b_c(B)$:
\[
\mathfrak{p}_*(f_{1*}\BQ_{X_1}[2m]) \otimes \mathfrak{q}_*(f_{2*}\BQ_{X_2}[2n]) \to \mathfrak{r}_*(f_{3*}\BQ_{X_3}[2p]).
\]
One primary example of binary morphisms of motives is the cup-product
\[
\cup: h(X) \times h(X) \to h(X)
\]
corresponding to the small relative diagonal $[\Delta^{\mathrm{sm}}_{X/B}] \in \Corr^0_B(X, X; X)$. It specializes to the sheaf-theoretic cup-product in $D^b_c(B)$,
\[
\cup: f_*\BQ_X \otimes f_*\BQ_X \to f_*\BQ_X,
\]
and further to the cup-product
\[
\cup: H^d(X_b, \BQ) \times H^{d'}(X_b, \BQ) \to H^{d + d'}(X_b, \BQ)
\]
by restricting to the closed point $b \in B$.\footnote{In the Chow realization, the cup-product specializes to the intersection product on $\Chow^*(X)$.}

\subsection{Chern characters} \label{Sec2.3}

As we will be working throughout with coherent sheaves on singular varieties, we need a systematic definition of Chern characters taking values in Chow groups. For convenience we use the Riemann--Roch theorem of Baum--Fulton--MacPherson in \cite[Chapter 18]{Ful}.

Let $K_*(-)$ (resp.~$K^*(-)$) denote the Grothendieck group of coherent sheaves (resp.~vector bundles). By \cite[Theorem 18.3]{Ful}, for any finite type scheme $X$ there is a homomorphism
\[
\tau: K_*(X) \to \Chow_*(X)
\]
satisfying the following properties.
\begin{enumerate}

\item[(i)] For $f: X \to Y$ proper and $F \in K_*(X)$, we have $f_*\tau(F) = \tau(f_*F)$.

\item[(ii)] For $E \in K^*(X), F \in K_*(X)$, we have $\tau(E \otimes F) = \ch(E) \cap \tau(F)$.


\item[(iii)] If $i : X \hookrightarrow Y$ is a closed embedding with $Y$ nonsingular, then for $F \in K_*(X)$,
\begin{equation} \label{eq:locchern}
\tau(F) = \ch_X^Y(F_\bullet) \cap \td(Y) \cap [Y] = \td(i^*T_Y) \cap \ch_X^Y(F_\bullet) \cap [Y].
\end{equation}
Here $\ch_X^Y(-)$ is the localized Chern character in \cite[Section 18.1]{Ful} and $F_\bullet$ is any locally free resolution of $i_*F$ on $Y$.

\item[(iv)] If $f: X \to Y$ is an l.c.i.~morphism and both $X$ and $Y$ admit closed embeddings in nonsingular varieties\footnote{The assumption on the embeddings was recently removed by \cite{AGP}.}, then for $F \in K_*(Y)$,
\[
\tau(f^*F) = \td(T_f) \cap f^*\tau(F).
\]
Here $T_f \in K^*(X)$ is the virtual tangent bundle of $f$.

\end{enumerate}
Intuitively, one can think of $\tau$ as a covariant (for proper morphisms), Todd-twisted Chern character for coherent sheaves on arbitrary varieties.

\subsection{Statement of the main theorem} \label{Sec2.4}

Let $\pi: M \to B$ be a dualizable abelian fibration of relative dimension $g$ in the sense of Section \ref{Sec1.4}, and let $\pi^\vee: M^\vee \to B$ be its dual abelian fibration. We consider the normalized Poincar\'e complex $\CP \in D^b(M^\vee \times_B M)$, and define
\begin{equation} \label{eq:defFF}
\mathfrak{F} := \td(-T_{M^\vee \times_B M}) \cap \tau(\CP) \in \Chow_*(M^\vee \times_B M),
\end{equation}
where $T_{M^\vee \times_B M}$ is the virtual tangent bundle of $M^\vee \times_B M$ (since $M^\vee \times_B M$ is l.c.i.). By \eqref{eq:locchern} applied to the closed embedding $i: M^\vee \times_B M \hookrightarrow M^\vee \times M$, we also have
\begin{equation} \label{eq:FFaut}
\mathfrak{F} = \td((\pi^\vee \times_B \pi)^*T_B) \cap \ch_{M^\vee \times_B M}^{M^\vee \times M}(\CP_\bullet) \cap [M^\vee \times M],
\end{equation}
where $\pi^\vee \times_B \pi: M^\vee \times_B M \to B$ is the natural morphism. On the other hand, for the inverse of $\CP$ we set
\begin{equation} \label{eq:defFF-1}
\begin{aligned}
\mathfrak{F}^{-1} & := \td(-(\pi \times_B \pi^\vee)^*T_B) \cap \tau(\CP^{-1}) \\
& \phantom{:}= \td(T_{M \times_B M^\vee}) \cap \ch_{M \times_B M^\vee}^{M \times M^\vee}(\CP^{-1}_\bullet) \cap [M \times M^\vee] \in \Chow_*(M \times_B M^\vee).
\end{aligned}
\end{equation}
We view both $\mathfrak{F}$ and $\mathfrak{F}^{-1}$ as mixed-degree relative correspondences and call them the Chow-theoretic \emph{Fourier transform} and its inverse.

\begin{lem} \label{lem:ff-1}
We have
\[
\mathfrak{F}^{-1} \circ \mathfrak{F} = [\Delta_{M^\vee/B}] \in \Corr^0_B(M^\vee, M^\vee), \quad \mathfrak{F} \circ \mathfrak{F}^{-1} = [\Delta_{M/B}] \in \Corr_B^0(M, M).
\]
\end{lem}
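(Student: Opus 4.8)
The statement is the Chow-theoretic shadow of the derived-category identities $\CP^{-1}\circ\CP\simeq\CO_{\Delta_{M^\vee/B}}$ and $\CP\circ\CP^{-1}\simeq\CO_{\Delta_{M/B}}$ from (a1). The plan is to deduce it by applying the Riemann--Roch transformation $\tau$ to these identities and tracking the Todd-class corrections through the composition of correspondences. Concretely, I would first rewrite $\FF$ and $\FF^{-1}$ purely in terms of $\tau$: by \eqref{eq:defFF} and \eqref{eq:defFF-1} we have $\FF = \td(-T_{M^\vee\times_B M})\cap\tau(\CP)$ and $\FF^{-1}=\td(-(\pi\times_B\pi^\vee)^*T_B)\cap\tau(\CP^{-1})$. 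The asymmetry in the two Todd twists is deliberate and is exactly what makes the Todd factors telescope in the composite; the key will be to see that the combination $\td(-T_{M^\vee\times_B M})$ together with $\td(-(\pi\times_B\pi^\vee)^*T_B)$ and the excess/virtual tangent bundles appearing in the refined Gysin pullback $\delta^!$ of \eqref{eq:compCH} collapse to the single Todd twist needed to recognize $\tau(\CO_{\Delta_{M^\vee/B}})$.

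The main computation is the following. By definition \eqref{eq:compCH}, $\FF^{-1}\circ\FF = p_{13*}\delta^!(\FF\times\FF')$ where $\delta^!$ is the refined Gysin map along $\Delta_{M}$ (base-changed to the relative triple product $M^\vee\times_B M\times_B M^\vee$). On the $K$-theory side, by \eqref{eq:compFMK}, $\CP^{-1}\circ\CP = p_{13*}\delta^!(\CP\boxtimes\CP^{-1})$, and (a1) tells us this equals $[\CO_{\Delta_{M^\vee/B}}]$ in $K_*(M^\vee\times_B M^\vee)$. The strategy is to show
\[
\tau\big(p_{13*}\delta^!(\CP\boxtimes\CP^{-1})\big) = \td(-T_{M^\vee\times_B M^\vee})\cap\big(\FF^{-1}\circ\FF\big)
\]
so that, combining with $\tau(\CO_{\Delta_{M^\vee/B}}) = \td(-T_{M^\vee\times_B M^\vee})\cap[\Delta_{M^\vee/B}]$ (which is \eqref{eq:locchern} applied to $M^\vee\times_B M^\vee\hookrightarrow M^\vee\times M^\vee$, using that $\Delta_{M^\vee/B}$ is a regular embedding of the relative diagonal so its localized Chern character is the fundamental class), and using that $\cap$ with an invertible Todd class is injective, we conclude $\FF^{-1}\circ\FF=[\Delta_{M^\vee/B}]$. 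To get the displayed identity I would use: property (i) of $\tau$ to commute $\tau$ past $p_{13*}$; property (iv) (the l.c.i.\ version of RR, or rather its bivariant refinement for refined Gysin maps — one can replace $\delta^!$ by pullback along the regular embedding $M^\vee\times_B M\times_B M^\vee\hookrightarrow (M^\vee\times_B M)\times(M\times_B M^\vee)$ cut out by $\Delta_M$) to pull the Todd factor of $M$ through $\delta^!$; and property (ii) to handle the external product $\boxtimes$ versus $\times$, where Todd classes are multiplicative. Bookkeeping the virtual tangent bundles via the exact sequences relating $T_{M^\vee\times_B M\times_B M^\vee}$, the pullbacks of $T_{M^\vee\times_B M}$, $T_{M\times_B M^\vee}$, $T_M$, and $T_B$ over $B$ then yields the claimed single Todd twist. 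The second identity $\FF\circ\FF^{-1}=[\Delta_{M/B}]$ is proven identically with the roles of $M$ and $M^\vee$ exchanged.

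The main obstacle I anticipate is the Todd-class bookkeeping across the refined Gysin pullback $\delta^!$: unlike a flat pullback or a morphism of smooth schemes, $\delta^!$ lands in a (possibly singular) relative triple product, so one cannot naively invoke property (iv), which is stated for l.c.i.\ morphisms between schemes admitting embeddings in smooth varieties. The clean way around this is to factor $\delta^!$ through the actual regular embedding $j:M^\vee\times_B M\times_B M^\vee\hookrightarrow (M^\vee\times_B M)\times(M\times_B M^\vee)$ (base change of $\Delta_{M}\colon M\hookrightarrow M\times M$), which \emph{is} a regular embedding of finite-type schemes, and apply the compatibility of $\tau$ with Gysin maps for regular embeddings — equivalently, use \eqref{eq:locchern} and the excess-intersection formula for localized Chern characters as in \cite[Chapter~18]{Ful}. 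One then only needs the normal bundle of $j$, which is $\Delta_M^*T_M$ pulled back, to match the $M$-Todd factor built into $\FF$ and $\FF^{-1}$. A secondary but purely clerical point is to verify at the level of virtual tangent bundles over $B$ that the Todd factors $\td(-T_{M^\vee\times_B M})$, $\td(-(\pi\times_B\pi^\vee)^*T_B)$, and $\td(N_j)$ multiply to $\td(-T_{M^\vee\times_B M^\vee})$; this follows from the short exact sequences of relative tangent sheaves for the various projections and the relation $T_{X\times_B Y}\cong\mathrm{pr}_X^*T_{X/B}\oplus\mathrm{pr}_Y^*T_{Y/B}\oplus q^*T_B$ over a smooth base. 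With $\delta^!$ handled via regular embeddings, the remaining steps are formal applications of properties (i), (ii), (iv) of $\tau$ and the multiplicativity of the Todd class.
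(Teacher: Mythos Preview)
Your approach is the same as the paper's: apply $\tau$ to the identity $\CP^{-1}\circ\CP\simeq\CO_{\Delta_{M^\vee/B}}$, compute each side using properties (i), (ii), (iv) of $\tau$, and cancel an invertible Todd factor. The paper handles $\delta^!$ exactly as you propose, observing that (thanks to the flatness of $\pi^\vee$) the base-changed embedding $M^\vee\times_B M\times_B M^\vee\hookrightarrow(M^\vee\times_B M)\times(M\times_B M^\vee)$ is regular with normal bundle $p_2^*T_M$, so l.c.i.\ pullback along it agrees with the refined Gysin map.

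That said, the explicit Todd identities you write down are incorrect, and this is not merely clerical: getting them right is the entire content of the lemma. The paper computes directly, via covariance for the pushforward along the relative diagonal,
\[
\tau(\CO_{\Delta_{M^\vee/B}})=\Delta_{M^\vee/B*}\bigl(\td(T_{M^\vee})\cap[M^\vee]\bigr)=\td(q_1^{\vee*}T_{M^\vee})\cap[\Delta_{M^\vee/B}],
\]
whereas your claimed formula $\tau(\CO_{\Delta_{M^\vee/B}})=\td(-T_{M^\vee\times_B M^\vee})\cap[\Delta_{M^\vee/B}]$ is false: the restriction of the virtual tangent of $M^\vee\times_B M^\vee$ to the diagonal is $2T_{M^\vee}-\pi^{\vee*}T_B$, not $-T_{M^\vee}$. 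Your justification via \eqref{eq:locchern} also fails because the relative diagonal $M^\vee\hookrightarrow M^\vee\times_B M^\vee$ need not be a regular embedding (the fiber product is in general singular), so one cannot simply read off the localized Chern character as the fundamental class. Correspondingly, the common Todd factor that one caps both sides with is $\td(-q_1^{\vee*}T_{M^\vee})$, not $\td(-T_{M^\vee\times_B M^\vee})$; on the triple product one gets $\td(-p_1^*T_{M^\vee})\cdot\td(-p_2^*T_M)$, which then distributes across $\delta^!$ as $\td(-T_{M^\vee\times_B M})$ on the $\CP$ factor and $\td(-(\pi\times_B\pi^\vee)^*T_B)$ on the $\CP^{-1}$ factor, matching the definitions of $\FF$ and $\FF^{-1}$ exactly. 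The asymmetry in those definitions is what makes this work, so the bookkeeping is worth redoing carefully.
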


\begin{proof}
We take ${\CP}^{-1} \circ {\CP} \simeq \CO_{\Delta_{M^\vee/B}}$ from \eqref{eq:dual} and apply $\tau$ on both sides. On one hand we~have
\begin{equation} \label{eq:tauO}
\tau(\CO_{\Delta_{M^\vee/B}}) = \Delta_{M^\vee/B*}\tau(\CO_{M^\vee}) = \Delta_{M^\vee/B*}\left(\td(T_{M^\vee}) \cap [M^\vee]\right) = \td(q_1^{\vee*}T_{M^\vee})\cap [\Delta_{M^\vee/B}]
\end{equation}
where $q^\vee_1: M^\vee \times_B M^\vee \to M^\vee$ is the first projection. On the other hand, we compute
\begin{equation} \label{eq:tauP}
\begin{aligned}
\tau(\CP^{-1} \circ \CP) & = p_{13*}\tau(\delta^*(\CP \boxtimes \CP^{-1})) \\
& = p_{13*}\left(\td(-p_2^*T_M)\cap \delta^!\tau(\CP \boxtimes \CP^{-1})\right) \\
& = p_{13*}\left(\td(-p_2^*T_M)\cap \delta^!(\tau(\CP) \times \tau(\CP^{-1}))\right).
\end{aligned}
\end{equation}
Here we have taken the notation from Sections \ref{Sec1.3.2} and \ref{Sec2.2.1}. The second equality uses the flatness of $\pi^\vee: M^\vee \to B$ so that the base-changed embedding
\[
M^\vee \times_B M \times_B M^\vee \hookrightarrow (M^\vee \times_B M) \times (M \times_B M^\vee)
\]
is regular (hence l.c.i.) with normal bundle identified with $p_2^*T_M$, and pulling back along this embedding coincides with the Gysin pullback $\delta^!$. The third equality uses \cite[Example~18.3.1]{Ful}.

By capping both \eqref{eq:tauO} and \eqref{eq:tauP} with $\td(-q_1^{\vee*}T_{M^\vee})$, we find
\begin{align*}
[\Delta_{M^\vee/B}] & = \td(-q_1^{\vee*}T_{M^\vee}) \cap p_{13*}\left(\td(-p_2^*T_M)\cap \delta^!(\tau(\CP) \times \tau(\CP^{-1}))\right) \\
& = p_{13*}\left(\td(-p_1^*T_{M^\vee}) \cap \td(-p_2^*T_M)\cap \delta^!(\tau(\CP) \times \tau(\CP^{-1}))\right) \\
& = p_{13*}\delta^!\left(\left(\td(-T_{M^\vee \times_B M}) \cap \tau(\CP)\right) \times \left(\td(-(\pi \times_B \pi^\vee)^*T_B) \cap \tau(\CP^{-1})\right)\right) \\
& = p_{13*}\delta^!(\FF \times \FF^{-1}) = \FF^{-1} \circ \FF,
\end{align*}
which proves the first statement of the lemma. The second statement is almost identical.
\end{proof}

\begin{rmk}
The seemingly asymmetric assignments for $\mathfrak{F}$ and $\mathfrak{F}^{-1}$ are deliberate. For one thing $\mathfrak{F}$ restricts to the Fourier transform in \cite{DM} over the open $U \subset B$ where $\pi: M_U \to U$ is an abelian scheme,
\[
\mathfrak{F}|_U = \ch(\CL) \cap [M^\vee_U \times_U M_U] = \exp(c_1(\CL)) \cap [M^\vee_U \times_U M_U],
\]
with no Todd contribution whatsoever. Another reason is the compatibility with tautological classes; see Section \ref{Sec5.2}.
\end{rmk}

We decompose both $\FF$ and $\FF^{-1}$ into
\begin{gather*}
\mathfrak{F} = \sum_{i \geq 0} \mathfrak{F}_i, \quad \mathfrak{F}_i \in \Corr_B^{i - g}(M^\vee, M), \\
\mathfrak{F}^{-1} = \sum_{i \geq 0} \mathfrak{F}^{-1}_i, \quad \mathfrak{F}^{-1}_i \in \Corr_B^{i - g}(M, M^\vee).
\end{gather*}

Now we can formulate the Fourier vanishing mentioned in Section \ref{Sec0.2} in terms of correspondences. 

\begin{defn}[Fourier vanishing]
With the notation as above, we say that the dualizable abelian fibration $\pi: M \to B$ of relative dimension $g$ satisfies the Fourier vanishing (FV), if 
\begin{equation}
\tag{FV} \FF^{-1}_i \circ \FF_j = 0 \in \Corr_B^{i + j - 2g}(M^\vee, M^\vee), \quad i+j <2g.
\end{equation}
\end{defn}

The main theorem of this section is a motivic version of Theorem \ref{thm0.1}.

\begin{thm} \label{thm:main}
Let $\pi: M \to B$ be a dualizable abelian fibration of relative dimension $g$ which satisfies (FV).
\begin{enumerate}
\item[(i)] (Decomposition) For each $k$, the pair of self-correspondences
\[
\mathfrak{p}_k := \sum_{i \leq k} \FF_i \circ \FF^{-1}_{2g - i} \in \Corr^0_B(M, M), \quad \mathfrak{q}_{k + 1}:= \sum_{i \geq k + 1} \FF_i \circ \FF^{-1}_{2g - i} \in \Corr^0_B(M, M)
\]
are orthogonal projectors, and induce a decomposition of motives
\begin{equation}\label{eq:motdec}
h(M) = P_kh(M) \oplus Q_{k + 1}h(M) \in \mathrm{CHM}(B)
\end{equation}
with $P_kh(M) = (M, \Fp_k, 0),\, Q_{k + 1}h(M) = (M, \Fq_{k + 1}, 0)$.

\item[(ii)] (Realization) For each $k$, the homological realization of $P_kh(M)$ together with the inclusion $P_kh(M) \to h(M)$ is the natural morphism
\[
{^\mathfrak{p}}\tau_{\leq k+\dim B} \pi_* \BQ_M \to \pi_*\BQ_M.
\]
Similarly, the homological realization of $h(M) \to Q_{k + 1}h(M)$ is the natural morphism
\[
\pi_*\BQ_M \to {^\mathfrak{p}}\tau_{\geq k + 1 + \dim B} \pi_* \BQ_M.
\]

\item[(iii)] (Multiplicativity) For each pair of $k, l$, the cup-product
\[
\cup: h(M) \times h(M) \to h(M)
\]
projects to zero on
\[
\cup: P_kh(M) \times P_lh(M) \to Q_{k + l + 1}h(M).
\]

\item[(iv)] (Perverse $\supset$ Chern) For each $k$, the morphism $\FF_k: h(M^\vee)(g - k) \to h(M)$ projects to zero on
\[
\FF_k: h(M^\vee)(g - k) \to Q_{k + 1}h(M).
\]
\end{enumerate}
\end{thm}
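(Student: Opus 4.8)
The plan is to deduce everything from the algebra of the correspondences $\FF_i$, $\FF^{-1}_i$, so the key computational tool throughout is Lemma \ref{lem:ff-1} (which gives $\FF^{-1} \circ \FF = [\Delta_{M^\vee/B}]$ and $\FF \circ \FF^{-1} = [\Delta_{M/B}]$) together with the vanishing (FV). Expanding $\FF^{-1} \circ \FF$ by degree, the identity $\sum_{i+j} \FF^{-1}_i \circ \FF_j = [\Delta_{M^\vee/B}]$ splits into a vanishing $\sum_{i+j=m}\FF^{-1}_i\circ\FF_j = 0$ for $m \neq 2g$ and the relation $\sum_i \FF^{-1}_i \circ \FF_{2g-i} = [\Delta_{M^\vee/B}]$ in degree $2g$ — wait, that last identity needs care since a priori only $\sum_{i+j=2g}$ equals the diagonal, but (FV) kills the terms with $i+j<2g$ and leaves the components with $i+j\ge 2g$; I would first record precisely which graded pieces survive. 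With the $i+j\ge 2g$ relations in hand, for (i) I would compute $\Fp_k\circ\Fp_k$ by substituting the definition $\Fp_k = \sum_{i\le k}\FF_i\circ\FF^{-1}_{2g-i}$, inserting $\FF^{-1}_{2g-i}\circ\FF_{i'}$ in the middle, and using (FV) to force $2g-i+i' \ge 2g$, i.e.\ $i'\le i$; the surviving terms reassemble into $\Fp_k$ by the degree-$2g$ relation. Orthogonality $\Fp_k\circ\Fq_{k+1}=0$ and $\Fp_k+\Fq_{k+1}=\FF\circ\FF^{-1}=[\Delta_{M/B}]$ follow from the same bookkeeping, giving the motivic decomposition \eqref{eq:motdec}.

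For (ii) I would pass through the Corti--Hanamura realization functor \eqref{eq:real} and the compatibility \eqref{eq:compcomp}: the realization of $\Fp_k$ is an idempotent endomorphism of $\pi_*\BQ_M$, so it is projection onto a direct summand, and I must identify that summand with ${}^{\mathfrak p}\tau_{\le k+\dim B}\pi_*\BQ_M$. The key point is that $\FF$ and $\FF^{-1}$ realize to morphisms in $D^b_c(B)$ that shift perverse degree in a controlled way: because $\FF_i \in \Corr^{i-g}_B(M^\vee,M)$ and $\pi^\vee$ has full support (condition (b)), the realized $\FF_i$ raises the perverse (Leray-type) index by exactly $i-g$ relative to the middle, so that $\FF_i\circ\FF^{-1}_{2g-i}$ lands in perverse degrees $\le k$ precisely when $i\le k$. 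I would make this rigorous using the full-support hypothesis to control ${}^{\mathfrak p}\CH^\bullet(\pi^\vee_*\BQ_{M^\vee})$, together with the fact that $\FF^{-1}\circ\FF = \mathrm{id}$ forces the realized $\FF$ to be an isomorphism of the associated graded, hence a perverse filtered isomorphism; then $\Fp_k$ realizes to the truncation. This is the step I expect to be the main obstacle — relating the degree grading on correspondences to the perverse filtration after realization is exactly where the support theorem and the interplay between Leray and perverse filtrations has to be used carefully, rather than by formal manipulation.

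Statements (iii) and (iv) are then formal consequences of (i) and (ii). For (iv), the composite $\Fq_{k+1}\circ\FF_k$ unwinds, via $\Fq_{k+1}=\sum_{i\ge k+1}\FF_i\circ\FF^{-1}_{2g-i}$, to $\sum_{i\ge k+1}\FF_i\circ(\FF^{-1}_{2g-i}\circ\FF_k)$, and (FV) forces $2g-i+k\ge 2g$, i.e.\ $i\le k$, contradicting $i\ge k+1$, so every term vanishes. For (iii), I would use condition (a2): the convolution identity \eqref{convolution} says that the cup-product on $h(M)$ is intertwined by $\FF$ with the convolution product $\ast$ on $M^\vee$, and crucially $\ast$ is induced by a codimension-$g$ cycle $\CK$, which makes $\ast$ \emph{degree-additive} with respect to the grading that $\FF$ transports to a grading on $h(M^\vee)$. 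Concretely, writing $P_kh(M)=\FF(\bigoplus_{i\le k}\cdots)$, the cup-product of a class of ``$\FF$-degree $\le k$'' with one of ``$\FF$-degree $\le l$'' has $\FF$-degree $\le k+l$ because $\CK$ contributes codimension $g$ and the degrees add; hence it lies in $P_{k+l}h(M)$, i.e.\ the composite to $Q_{k+l+1}h(M)$ is zero. The one thing to check here is that the Chow-theoretic Fourier transform $\FF$ (with its Todd corrections, as in \eqref{eq:defFF}) genuinely intertwines $\cup$ and $\ast$ at the level of correspondences — this is a Grothendieck--Riemann--Roch computation using the codimension-$g$ and l.c.i.\ hypotheses on $\CK$ and on the relative products, parallel to Lemma \ref{lem:ff-1} but for the ternary correspondence $[\Delta^{\mathrm{sm}}_{M/B}]$.
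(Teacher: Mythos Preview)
Your approach to (i), (iii), and (iv) is essentially the paper's. One computational slip in (i): from $2g - i + i' \geq 2g$ you should conclude $i' \geq i$, not $i' \leq i$, and with the correct inequality the surviving terms of $\Fp_k \circ \Fp_k$ do \emph{not} directly reassemble into $\Fp_k$. The paper instead starts from $\Fp_k = [\Delta_{M/B}] \circ \Fp_k$, expands $[\Delta_{M/B}] = \sum_i \FF_i \circ \FF^{-1}_{2g-i}$ over \emph{all} $i$, and uses (FV) to kill the terms with $i > k$ (since then $j \leq k < i$ forces $(2g-i)+j<2g$); this gives $\Fp_k = \Fp_k \circ \Fp_k$ in one step, and $\Fq_{k+1} = [\Delta_{M/B}] - \Fp_k$ handles the rest. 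Your (iv) is exactly the paper's, and your (iii) is right in outline: the paper proves the GRR-type identity $\FF \circ \FC \circ (\FF^{-1} \times \FF^{-1}) = [\Delta^{\mathrm{sm}}_{M/B}]$ (Lemma~\ref{lem:conv}) and combines the degree estimate $\FC \in \Corr_B^{\geq -g}$ from $\mathrm{codim}(\CK) = g$ with (FV) to force $\Fq_{k+l+1} \circ [\Delta^{\mathrm{sm}}_{M/B}] \circ (\Fp_k \times \Fp_l) \in \Corr_B^{\geq 1}$, hence zero. (Note (iii) is not a formal consequence of (i) and (ii); it genuinely needs (a2).)

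The real gap is (ii). You propose to argue that the realization of $\FF_i \in \Corr_B^{i-g}$ shifts perverse degree by $i-g$, but a degree grading on Chow correspondences does not in general translate into perverse amplitude after realization; there is no mechanism in your sketch that produces such control. The paper's argument is different and is exactly where the full-support hypothesis (b) is spent. One restricts to the open $U \subset B$ over which $\pi$ is an abelian scheme; there $\Fp_k|_U$ coincides with the Deninger--Murre projectors, whose realization is already known to cut out $\bigoplus_{i \leq k} R^i\pi_{U*}\BQ_{M_U}[-i]$ inside $\pi_{U*}\BQ_{M_U}$. Now (b) says every simple summand of ${^\mathfrak{p}}\CH^{i+\dim B}(\pi_*\BQ_M)$ has support $B$, so a morphism between such perverse sheaves is zero (respectively an isomorphism) over $B$ if and only if it is so over $U$. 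Applied to the map ${^\mathfrak{p}}\CH^{i+\dim B}(\Fp_{k*}(\pi_*\BQ_M)) \to {^\mathfrak{p}}\CH^{i+\dim B}(\pi_*\BQ_M)$, this forces vanishing for $i>k$ and isomorphism for $i\leq k$, identifying the realized $\Fp_k$ with ${^\mathfrak{p}}\tau_{\leq k+\dim B}$. You correctly flagged (ii) as the crux, but the missing idea is this restrict-to-$U$-then-extend-by-support argument, not a direct perverse bound on $\FF_i$.
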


In view of the discussion in Section \ref{Sec2.2.3}, Theorem \ref{thm:main}(iii) specializes to the sheaf-theoretic multiplicativity
\[
\cup: {^\mathfrak{p}}\tau_{\leq k+\dim B} \pi_* \BQ_M \otimes {^\mathfrak{p}}\tau_{\leq l+\dim B} \pi_* \BQ_M \to {^\mathfrak{p}}\tau_{\leq k + l +\dim B} \pi_* \BQ_M,
\]
which further specializes to Theorem \ref{thm0.1}(i). Theorem \ref{thm:main}(iv) specializes immediately to Theorem \ref{thm0.1}(ii) by taking global cohomology. Assuming Theorem \ref{thm0.2} (which will be proven in Section \ref{comp_jac}) and Theorem \ref{thm1.4} (the perverse filtration (\ref{perverse}) being intrinsic), Theorem \ref{thm:main}(iii) also specializes to Theorem \ref{thm0.5} concerning the cohomology of a single curve with planar singularities.

Moreover, by tweaking the projectors $\mathfrak{p}_k$ of Theorem \ref{thm:main}(i) we obtain a motivic decomposition for $\pi: M \to B$, thus confirming Corti--Hanamura's conjecture in this case.

\begin{cor}\label{motivic_decomp}
Let $\pi: M \to B$ be a dualizable abelian fibration of relative dimension $g$ which satisfies (FV). There exists a decomposition of motives
\begin{equation*}
h(M) = \bigoplus_{i = 0}^{2g} h_i(M) \in \mathrm{CHM}(B)
\end{equation*}
whose homological realization recovers the decomposition theorem for $\pi: M\to B$.
\end{cor}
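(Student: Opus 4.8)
The plan is to deduce the corollary directly from Theorem~\ref{thm:main}(i) and (ii) by iterating the two-step decomposition across all perverse degrees. First I would apply Theorem~\ref{thm:main}(i) for each $k$ with $0 \le k \le 2g-1$, producing orthogonal projectors $\Fp_k$ and $\Fq_{k+1} = \Delta_{M/B} - \Fp_k$. These projectors all commute with one another: indeed $\Fp_k = \sum_{i \le k} \FF_i \circ \FF^{-1}_{2g-i}$, and since the building blocks $\FF_i \circ \FF^{-1}_{2g-i}$ are themselves orthogonal idempotents by the argument proving part~(i) (orthogonality for $i \ne j$ follows from (FV) together with Lemma~\ref{lem:ff-1}, exactly as in the proof of (i)), the $\Fp_k$ form an increasing chain of commuting projectors $\Fp_0 \le \Fp_1 \le \dots \le \Fp_{2g} = \Delta_{M/B}$. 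Setting
\[
\Fpi_i := \Fp_i - \Fp_{i-1} = \FF_i \circ \FF^{-1}_{2g-i} \in \Corr^0_B(M,M), \quad 0 \le i \le 2g,
\]
(with the convention $\Fp_{-1} = 0$) gives a complete system of orthogonal projectors summing to $\Delta_{M/B}$, hence a decomposition $h(M) = \bigoplus_{i=0}^{2g} h_i(M)$ with $h_i(M) := (M, \Fpi_i, 0)$ in the pseudo-abelian category $\mathrm{CHM}(B)$.

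Next I would identify the homological realization. By Theorem~\ref{thm:main}(ii), the realization of $P_k h(M) = (M, \Fp_k, 0)$ inside $h(M)$ is the subobject ${}^{\Fp}\tau_{\le k + \dim B}\, \pi_*\BQ_M \hookrightarrow \pi_*\BQ_M$, and these inclusions are compatible as $k$ varies (both sides being realized by the same chain of commuting projectors). Since the realization functor \eqref{eq:real} is additive and sends $\Fpi_i = \Fp_i - \Fp_{i-1}$ to the corresponding difference of subobjects, the motive $h_i(M)$ realizes to
\[
{}^{\Fp}\tau_{\le i + \dim B}\, \pi_*\BQ_M \,\big/\, {}^{\Fp}\tau_{\le i-1 + \dim B}\, \pi_*\BQ_M \;\simeq\; {}^{\Fp}\CH^{\,i}(\pi_*\BQ_M)[-i - \dim B],
\]
the $i$-th perverse cohomology sheaf (shifted). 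Summing over $i$ recovers $\pi_*\BQ_M \simeq \bigoplus_i {}^{\Fp}\CH^{\,i}(\pi_*\BQ_M)[-i]$, i.e.\ the decomposition theorem for $\pi$. This is precisely the assertion that the homological specialization of $h(M) = \bigoplus_i h_i(M)$ recovers the decomposition theorem.

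The one point requiring a little care — and what I expect to be the main (mild) obstacle — is checking that the projectors $\Fp_k$ for different $k$ genuinely commute and that the differences $\Fpi_i$ are idempotent, since Theorem~\ref{thm:main}(i) as stated only asserts orthogonality of the pair $(\Fp_k, \Fq_{k+1})$ for a fixed $k$, not mutual compatibility across all $k$. Unwinding the proof of part~(i), the essential input is that $e_i := \FF_i \circ \FF^{-1}_{2g-i}$ satisfies $e_i \circ e_j = 0$ for $i \ne j$ and $\sum_i e_i = \Delta_{M/B}$ — the first from (FV) applied to $\FF^{-1}_{2g-i} \circ \FF_j$ (of degree $(2g-i)+j-2g = j-i < 0$ when $j < i$, with the complementary vanishing when $j > i$ obtained symmetrically or from the degree constraint), and the second from $\FF \circ \FF^{-1} = \Delta_{M/B}$ in Lemma~\ref{lem:ff-1}. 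Granting this, each $e_i = e_i \circ e_i$ is idempotent, the $e_i$ are mutually orthogonal, and everything above goes through formally. I would therefore state and verify this ``complete system of orthogonal idempotents $\{e_i\}$'' once, extract the decomposition $h(M) = \bigoplus_i (M, e_i, 0)$ from pseudo-abelianity of $\mathrm{CHM}(B)$, and then read off the realization from Theorem~\ref{thm:main}(ii); the rest is bookkeeping.
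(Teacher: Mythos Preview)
There is a genuine gap in the argument. You claim that the correspondences $e_i := \FF_i \circ \FF^{-1}_{2g-i}$ form a complete system of mutually orthogonal idempotents, but the condition (FV) does not deliver this. For $e_i \circ e_j = \FF_i \circ (\FF^{-1}_{2g-i} \circ \FF_j) \circ \FF^{-1}_{2g-j}$, the middle factor $\FF^{-1}_{2g-i} \circ \FF_j$ has index sum $(2g-i)+j$, and (FV) kills it only when this sum is $< 2g$, i.e.\ when $j < i$. When $j > i$ the sum exceeds $2g$ and neither (FV) nor any degree constraint applies: $e_i \circ e_j$ lives in $\Corr^0_B(M,M)$ regardless, so there is nothing forcing it to vanish. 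Your appeal to ``symmetry'' does not work either, since (FV) is a one-sided hypothesis on $\FF^{-1}_i \circ \FF_j$ only; the paper explicitly notes (Section~\ref{Sec0.2}) that the vanishing for $i+j > 2g$ is not known in general. For the same reason you cannot conclude that each $e_i$ is idempotent: $e_i \circ e_i$ involves $\FF^{-1}_{2g-i}\circ \FF_i$, where the indices sum to exactly $2g$ and (FV) says nothing.

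This is precisely the subtlety the paper flags in Section~\ref{Sec2.5.5}: from (FV) one gets $\Fp_l \circ \Fp_k = \Fp_k$ for $k < l$, but \emph{not} $\Fp_k \circ \Fp_l = \Fp_k$, so the $\Fp_k$ need not commute and the naive differences $\Fp_k - \Fp_{k-1}$ need not be projectors. The paper's fix is to replace $\Fp_k$ by the composite $\widetilde{\Fp}_k := \Fp_k \circ \Fp_{k+1} \circ \cdots \circ \Fp_{2g}$; using only the one-sided relation $\Fp_l \circ \Fp_k = \Fp_k$ one checks that the $\widetilde{\Fp}_k$ are genuine commuting projectors with $\widetilde{\Fp}_k \circ \widetilde{\Fp}_l = \widetilde{\Fp}_{\min(k,l)}$, so that $\overline{\Fp}_k := \widetilde{\Fp}_k - \widetilde{\Fp}_{k-1}$ is the desired orthogonal family. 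The realization argument then goes through as you describe. Your overall strategy is right, but the specific projectors you wrote down do not work without this modification.
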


\subsection{Proofs} \label{Sec2.5}

We now prove the four statements of Theorem \ref{thm:main}. Afterwards, we note that Corollary \ref{motivic_decomp} follows immediately.

\subsubsection{Decomposition} \label{Sec2.5.1}

By Lemma \ref{lem:ff-1}, we have
\[
[\Delta_{M/B}] = \FF \circ \FF^{-1} = \sum_{i = 0}^{2g}\FF_i \circ \FF^{-1}_{2g - i} = \Fp_k + \Fq_{k + 1} \in \Corr_B^0(M, M),
\]
where the second equality holds for degree reasons. Applying the condition (FV), we find
\begin{gather*}
\Fp_k = [\Delta_{M/B}] \circ \Fp_k = \left(\sum_{i = 0}^{2g}\FF_i \circ \FF^{-1}_{2g - i} \right) \circ \left(\sum_{j \leq k}\FF_j \circ \FF^{-1}_{2g - j} \right) \\ = \left(\sum_{i \leq k}\FF_i \circ \FF^{-1}_{2g - i} \right) \circ \left(\sum_{j \leq k}\FF_j \circ \FF^{-1}_{2g - j} \right) = \Fp_k \circ \Fp_k, \\
\Fq_{k + 1} = [\Delta_{M/B}] - \Fp_k = ([\Delta_{M/B}] - \Fp_k) \circ ([\Delta_{M/B}] - \Fp_k) = \Fq_{k + 1} \circ \Fq_{k + 1}, \\
\Fp_k \circ \Fq_{k + 1} = \Fp_k \circ ([\Delta_{M/B}] - \Fp_k) = 0, \\
\Fq_{k + 1} \circ \Fp_k = ([\Delta_{M/B}] - \Fp_k) \circ \Fp_k = 0.
\end{gather*}
This shows that $\Fp_k$ and $\Fq_{k + 1}$ are orthogonal projectors adding up to $[\Delta_{M/B}]$, which proves the motivic decomposition \eqref{eq:motdec}.

\subsubsection{Realization} \label{Sec2.5.2}

The realization statement is a consequence of the support condition (b) of Section \ref{Sec1.4}. We first remark that by restricting to the open subset $U \subset B$ where $\pi_U: M_U \to U$ is an abelian scheme, the self-correspondences
\[
\overline{\Fp}_k|_U := \FF_k|_U \circ \FF^{-1}_{2g - k}|_U \in \Corr^0_U(M_U, M_U)
\]
are projectors giving rise to the motivic decomposition in \cite{DM}:
\begin{equation} \label{eq:motdecsm}
h(M_U) = \bigoplus_{i = 0}^{2g}h_i(M_U) \in \mathrm{CHM}(U), \quad h_i(M_U) = (M_U, \overline{\Fp}_i|_U, 0).
\end{equation}
The above formula for the projectors is written down explicitly in \cite[Section 2]{Kun}. Moreover, it was shown in \cite[Remarks after Corollary 3.2]{DM} that the homological realization of \eqref{eq:motdecsm} yields a canonical decomposition into shifted local systems
\[
\pi_{U*}\BQ_{M_U} = \bigoplus_{i = 0}^{2g} R^i\pi_{U*}\BQ_{M_U}[-i].
\]
In particular, for the realization of $P_kh(M)|_U = (M_U, \Fp_k|_U, 0)$ we have
\[
\Fp_k|_{U*}(\pi_{U*}\BQ_{M_U}) = \bigoplus_{i = 0}^{k} R^i\pi_{U*}\BQ_{M_U}[-i]
\]
with $\Fp_k|_{U*}(\pi_{U*}\BQ_{M_U}) \to \pi_{U*}\BQ_{M_U}$ given by the natural inclusion.

Consequently, for the realization of the inclusion $P_kh(M) \to h(M)$, \emph{i.e.},
\begin{equation} \label{eq:realinc}
\Fp_{k*}(\pi_*\BQ_M) \to \pi_*\BQ_M,
\end{equation}
we know that the restriction to $U$ of the perverse cohomology
\begin{equation} \label{eq:i+dimB}
{^\Fp}\CH^{i + \dim B}(\Fp_{k*}(\pi_*\BQ_M))
\end{equation}
is zero for $i > k$, and the restriction to $U$ of the induced morphism
\begin{equation} \label{eq:i+dimB2}
{^\Fp}\CH^{i + \dim B}(-): {^\Fp}\CH^{i + \dim B}(\Fp_{k*}(\pi_*\BQ_M)) \to {^\Fp}\CH^{i + \dim B}(\pi_*\BQ_M)
\end{equation}
is an isomorphism for $i \leq k$. The support condition (b) then implies that \eqref{eq:i+dimB} and~\eqref{eq:i+dimB2} must stay so over the entire base $B$; otherwise the perverse sheaf ${^\Fp}\CH^{i + \dim B}(\pi_*\BQ_M)$ would admit a nontrivial subobject (either ${^\Fp}\CH^{i + \dim B}(\Fp_{k*}(\pi_*\BQ_M))$ or ${^\Fp}\CH^{i + \dim B}(\Fq_{{k + 1}*}(\pi_*\BQ_M))$) supported away from $U$, contradicting with the full support of ${^\Fp}\CH^{i + \dim B}(\pi_*\BQ_M)$. Therefore we have
\begin{equation}\label{eqn34}
\Fp_{k*}(\pi_*\BQ_M) \simeq {^\mathfrak{p}}\tau_{\leq k+\dim B} \pi_* \BQ_M
\end{equation}
with \eqref{eq:realinc} given by the natural morphism ${^\mathfrak{p}}\tau_{\leq k+\dim B} \pi_* \BQ_M \to \pi_* \BQ_M$. The realization statement for $Q_{k + 1}h(M)$ follows from its $P_kh(M)$ counterpart together with the motivic decomposition~\eqref{eq:motdec}.

\subsubsection{Multiplicativity} \label{Sec2.5.3}

The motivic multiplicativity is proven via the convolution product. Recall the convolution kernel $\CK \in D^b\Coh(M^\vee \times_B M^\vee \times_B M^\vee)$ from the condition~(a2) of Section \ref{Sec1.4}. We define the Chow-theoretic \emph{convolution} (or the \emph{Pontryagin product}) to be
\begin{equation} \label{eq:chowconv}
\FC := \td(-q^{\vee*}_{12}T_{M^\vee \times_B M^\vee}) \cap \tau(\CK) \in \Chow_*(M^\vee \times_B M^\vee \times_B M^\vee)
\end{equation}
where $q^\vee_{12}: M^\vee \times_B M^\vee \times_B M^\vee \to M^\vee \times_B M^\vee$ is the projection to the first two factors and~$T_{M^\vee \times_B M^\vee}$ is the virtual tangent bundle of $M^\vee \times_B M^\vee$ (since $M^\vee \times_B M^\vee$ is l.c.i.). After Section \ref{Sec2.2.3}, we view $\FC$ as a mixed-degree binary relative correspondence.

\begin{lem} \label{lem:conv}
We have
\[
\FF \circ \FC \circ (\FF^{-1} \times \FF^{-1}) = [\Delta^{\mathrm{sm}}_{M/B}] \in \Corr_B^0(M, M; M).
\]
\end{lem}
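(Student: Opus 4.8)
\textbf{Proof proposal for Lemma \ref{lem:conv}.}

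The plan is to mimic the proof of Lemma \ref{lem:ff-1}: start from the categorical/derived identity \eqref{convolution} of condition (a2), namely
\[
\CP \circ \CK \simeq \CO_{\Delta^\mathrm{sm}_{M/B}} \circ (\CP \boxtimes \CP) \in D^b\mathrm{Coh}(M^\vee \times_B M^\vee \times_B M),
\]
apply the Baum--Fulton--MacPherson map $\tau$ to both sides, and then translate the resulting Chow-class identity into the desired composition of correspondences by absorbing the Todd contributions. The left-hand side, after capping with the appropriate Todd factor, should become $\FF \circ \FC$; the right-hand side should become $[\Delta^{\mathrm{sm}}_{M/B}] \circ (\FF \times \FF)$ (reading the small diagonal as a binary correspondence composed with two copies of $\FF$). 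Finally, composing both sides on the right with $\FF^{-1} \times \FF^{-1}$ and using Lemma \ref{lem:ff-1} ($\FF \circ \FF^{-1} = [\Delta_{M/B}]$, applied in each of the two $M$-slots of the binary correspondence) will collapse the right-hand side to $[\Delta^{\mathrm{sm}}_{M/B}]$, which is the claim.

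In more detail, the key steps in order are: (1) Apply $\tau$ to $\CP \circ \CK$ using the projection-formula and Gysin-compatibility properties (i)--(iv) of Section \ref{Sec2.3}, exactly as in \eqref{eq:tauP}; here the relevant flatness is that of $\pi^\vee: M^\vee \to B$, so that the base-changed diagonal embedding $M^\vee \times_B M^\vee \times_B M^\vee \times_B M \hookrightarrow (M^\vee \times_B M^\vee \times_B M^\vee) \times (M^\vee \times_B M)$ is regular with normal bundle pulled back from $T_{M^\vee}$, matching the $\delta^!$ used to define the composition of correspondences in \eqref{eq:compCH}. One gets $\tau(\CP \circ \CK)$ expressed via $\delta^!(\tau(\CK) \times \tau(\CP))$ up to an explicit Todd twist. (2) Cap with $\td(-q^{\vee*}T_{M^\vee})$ along the appropriate projection to convert $\tau(\CK)$ into $\FC$ (using the definition \eqref{eq:chowconv}) and $\tau(\CP)$ into $\FF$ (using \eqref{eq:defFF}), bookkeeping the Todd terms exactly as in the final display of the proof of Lemma \ref{lem:ff-1}; the net effect is $\FF \circ \FC$. (3) Run the same computation on $\CO_{\Delta^\mathrm{sm}_{M/B}} \circ (\CP \boxtimes \CP)$: since $\CO_{\Delta^\mathrm{sm}_{M/B}}$ is the structure sheaf of a regular (l.c.i.) embedding, $\tau$ of it produces $[\Delta^{\mathrm{sm}}_{M/B}]$ capped with the Todd class of the diagonal's tangent bundle, and $\tau(\CP \boxtimes \CP) = \tau(\CP) \times \tau(\CP)$ by \cite[Example 18.3.1]{Ful}; combining with the Todd bookkeeping yields exactly $[\Delta^{\mathrm{sm}}_{M/B}] \circ (\FF \times \FF)$ as a binary correspondence. (4) Post-compose the identity $\FF \circ \FC \circ (\FF^{-1} \times \FF^{-1}) = [\Delta^{\mathrm{sm}}_{M/B}] \circ (\FF \times \FF) \circ (\FF^{-1} \times \FF^{-1})$ from step (3), and simplify the right side with Lemma \ref{lem:ff-1}.

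The main obstacle I anticipate is the Todd-class bookkeeping in the binary (three-factor) setting: one must keep track of the virtual tangent bundles of the various relative products $M^\vee \times_B M^\vee$, $M^\vee \times_B M^\vee \times_B M$, etc., verify that all the $\td$-twists introduced by properties (ii)--(iv) of $\tau$ and by the regular embeddings cancel correctly against those built into the definitions \eqref{eq:defFF}, \eqref{eq:defFF-1}, \eqref{eq:chowconv}, and confirm that the codimension-$g$ hypothesis on $\CK$ makes $\FC$ land in the expected degree so that the composition $\FF \circ \FC \circ (\FF^{-1} \times \FF^{-1})$ indeed sits in $\Corr_B^0(M,M;M)$. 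A secondary technical point is to make precise the statement ``$\CO_{\Delta^\mathrm{sm}_{M/B}}$ composed with $\CP \boxtimes \CP$'' on the level of correspondences — i.e., that realizing the small diagonal as a binary relative correspondence and composing with $\FF$ in each of its two incoming $M^\vee$-slots is compatible with the derived-category composition appearing in \eqref{convolution} — but this should follow from the compatibility between \eqref{eq:compFM}/\eqref{eq:compFMK} and \eqref{eq:compCH} already recorded in the excerpt, together with the functoriality of $\tau$. Once these compatibilities are in place, the proof is a formal consequence of \eqref{convolution} and Lemma \ref{lem:ff-1}, closely parallel to Lemma \ref{lem:ff-1} itself.
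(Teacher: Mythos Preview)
Your proposal is correct and follows essentially the same strategy as the paper: apply $\tau$ to the categorical identity \eqref{convolution} and carry out the Todd bookkeeping in parallel with Lemma~\ref{lem:ff-1}. The only difference is in the order of operations: the paper first rewrites \eqref{convolution} as $\CP \circ \CK \circ (\CP^{-1} \boxtimes \CP^{-1}) \simeq \CO_{\Delta^{\mathrm{sm}}_{M/B}}$ at the derived level and then applies $\tau$, so the right-hand side is simply $\td(q_3^*T_M)\cap[\Delta^{\mathrm{sm}}_{M/B}]$ and all the work goes into unwinding the left-hand side step by step (first $\tau(\CK \circ \CP^{-1})$, then $\tau(\CK \circ (\CP^{-1}\boxtimes\CP^{-1}))$, then the outer $\CP$); you instead keep both sides of \eqref{convolution} as they are, compute $\tau$ to obtain $\FF\circ\FC = [\Delta^{\mathrm{sm}}_{M/B}]\circ(\FF\times\FF)$, and only then compose with $\FF^{-1}\times\FF^{-1}$ on the Chow side using Lemma~\ref{lem:ff-1}. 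The paper's ordering has the practical advantage that one side of the identity becomes trivial, so there is only one three-factor Todd computation to do rather than two; your ordering requires you to also verify step~(3) --- that $\tau$ applied to $\CO_{\Delta^{\mathrm{sm}}_{M/B}}\circ(\CP\boxtimes\CP)$ really yields $[\Delta^{\mathrm{sm}}_{M/B}]\circ(\FF\times\FF)$ after the same Todd twist used in step~(2) --- which is true but is an extra calculation. Either way the argument goes through, and the Todd bookkeeping you anticipate is exactly the content of the proof.
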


\begin{proof}
The proof is similar to that of Lemma \ref{lem:ff-1}. We take the equation ${\CP} \circ \CK \simeq \CO_{\Delta^\mathrm{sm}_{M/B}}\circ ({\CP} \boxtimes {\CP} )$ from \eqref{convolution} and rewrite it as
\[
{\CP} \circ \CK \circ ({\CP}^{-1} \boxtimes {\CP}^{-1} ) \simeq \CO_{\Delta^\mathrm{sm}_{M/B}} \in D^b\Coh(M \times_B M \times_B M).
\]
Then we apply $\tau$ on both sides. On one hand we have
\begin{equation} \label{eq:tauOsm}
\tau(\CO_{\Delta^{\mathrm{sm}}_{M/B}}) = \Delta^{\mathrm{sm}}_{M/B*}\tau(\CO_{M}) = \Delta^{\mathrm{sm}}_{M/B*}\left(\td(T_{M}) \cap [M]\right) = \td(q_3^*T_{M})\cap [\Delta^{\mathrm{sm}}_{M/B}]
\end{equation}
where $q_3: M \times_B M \times_B M \to M$ is the third projection.
On the other hand, we first compute~$\tau(\CK \circ \CP^{-1})$ where the composition is on the first factor of $M^\vee \times_B M^\vee \times_B M^\vee$. We~have
\begin{align*}
\tau(\CK \circ \CP^{-1}) & = p_{134*}\tau(\delta^*(\CP^{-1} \boxtimes \CK)) \\
& = p_{134*}\left(\td(-p_2^*T_{M^\vee}) \cap \delta^!\tau(\CP^{-1} \boxtimes \CK)\right) \\
& = p_{134*}\left(\td(-p_2^*T_{M^\vee}) \cap \delta^!(\tau(\CP^{-1}) \times \tau(\CK))\right) \\
& = p_{134*}\delta^!\left(\left(\td(-(\pi \times_B \pi^\vee)^*T_B) \cap \tau(\CP^{-1})\right) \times \left(\td(-q_1^{\vee*}T_{\pi^\vee}) \cap \tau(\CK)\right) \right) \\
& = (\td(-q_1^{\vee*}T_{\pi^\vee}) \cap \tau(\CK)) \circ \FF^{-1},
\end{align*}
where $q_i^\vee: M^\vee \times_B M^\vee \times_B M^\vee \to M^\vee$ is the $i$-th projection. Similarly, we have
\[
\tau(\CK \circ (\CP^{-1} \boxtimes \CP^{-1})) = (\td(-q_1^{\vee*}T_{\pi^\vee} -q_2^{\vee*}T_{\pi^\vee}) \cap \tau(\CK)) \circ (\FF^{-1} \times \FF^{-1}).
\]
Finally, comparing with \eqref{eq:tauOsm} we find
\begin{align*}
[\Delta^{\mathrm{sm}}_{M/B}] & = \td(-q_3^*T_M) \cap \tau(\CO_{\Delta^\mathrm{sm}_{M/B}}) \\
& = \td(-q_3^*T_M) \cap \tau({\CP} \circ \CK \circ ({\CP}^{-1} \boxtimes {\CP}^{-1} )) \\
& = \td(-q_3^*T_M) \cap p_{124*} \tau(\delta^*((\CK \circ({\CP}^{-1} \boxtimes {\CP}^{-1} )) \boxtimes \CP)) \\
& = \td(-q_3^*T_M) \cap p_{124*}\left(\td(-p_3^*T_{M^\vee})\cap \delta^!\left(\tau(\CK \circ (\CP^{-1} \boxtimes \CP^{-1})) \times \tau(\CP)\right)\right) \\
& = \td(-q_3^*T_M) \cap p_{124*}\left(\td(-p_3^*T_{M^\vee}) \phantom{\FF^{-1}}\right.\\
& \quad\quad\quad \cap\left.\delta^!\left(\left((\td(-q_1^{\vee*}T_{\pi^\vee} - q_2^{\vee*}T_{\pi^\vee}) \cap \tau(\CK)) \circ (\FF^{-1} \times \FF^{-1})\right) \times \tau(\CP)\right)\right) \\
& = p_{124*} \delta^!\left(\left((\td(-q_{12}^{\vee*}T_{M^\vee \times_B M^\vee}) \cap \tau(\CK)) \circ (\FF^{-1} \times \FF^{-1})\right) \times \left(\td(-T_{M^\vee \times_B M}) \cap \tau(\CP)\right)\right) \\
& = p_{124*} \delta^!\left((\FC \circ (\FF^{-1} \times \FF^{-1})) \times \FF\right) = \FF \circ \FC \circ (\FF^{-1} \times \FF^{-1}),
\end{align*}
which proves the lemma.
\end{proof}


The key input from the condition (a2) of Section \ref{Sec1.4} is a degree estimate
\begin{equation} \label{eq:convdim}
\FC \in \Corr^{\geq -g}_B(M^\vee, M^\vee; M^\vee)
\end{equation}
since the convolution kernel $\CK \in D^b\Coh(M^\vee \times_B M^\vee \times_B M^\vee)$ is of codimension $g$. Now the restriction of the cup-product $\cup: h(M) \times h(M) \to h(M)$ to
\[
\cup: P_kh(M) \times P_lh(M) \to Q_{k + l + 1}h(M)
\]
is given by the composition
\begin{equation} \label{eq:convcomp}
\Fq_{k + l + 1} \circ [\Delta^{\mathrm{sm}}_{M/B}] \circ (\Fp_k \times \Fp_l) \in \Corr^0_B(M, M; M).
\end{equation}
By Lemma \ref{lem:conv} we have
\begin{equation} \label{eq:cupconv}
\Fq_{k + l + 1} \circ [\Delta^{\mathrm{sm}}_{M/B}] \circ (\Fp_k \times \Fp_l) = \Fq_{k + l + 1} \circ \FF \circ \FC \circ \left((\FF^{-1} \circ \Fp_k) \times (\FF^{-1} \circ \Fp_l)\right).
\end{equation}
Expanding the right-hand side of \eqref{eq:cupconv} and applying (FV), we find
\begin{equation} \label{eq:estimate}
\begin{aligned}
& \left(\sum_{i_3 \geq k + l + 1}\FF_{i_3}\circ\FF^{-1}_{2g - i_3}\right) \circ \left(\sum_{j_3}\FF_{j_3}\right) \circ \FC \\
& \quad \circ \left(\left(\left(\sum_{j_1}\FF^{-1}_{j_1}\right) \circ \left(\sum_{i_1 \leq k}\FF_{i_1}\circ\FF^{-1}_{2g - i_1}\right)\right) \times \left(\left(\sum_{j_2}\FF^{-1}_{j_2}\right) \circ \left(\sum_{i_2 \leq l}\FF_{i_2}\circ\FF^{-1}_{2g - i_2}\right)\right)\right) \\
{}={} & \left(\sum_{i_3 \geq k + l + 1}\FF_{i_3}\circ\FF^{-1}_{2g - i_3}\right) \circ \left(\sum_{j_3 \geq k + l + 1}\FF_{j_3}\right) \circ \FC \\
& \quad \circ \left(\left(\left(\sum_{j_1 \geq 2g - k}\FF^{-1}_{j_1}\right) \circ \left(\sum_{i_1 \leq k}\FF_{i_1}\circ\FF^{-1}_{2g - i_1}\right)\right) \times \left(\left(\sum_{j_2 \geq 2g - l}\FF^{-1}_{j_2}\right) \circ \left(\sum_{i_2 \leq l}\FF_{i_2}\circ\FF^{-1}_{2g - i_2}\right)\right)\right) \\
{}={} & \Fq_{k + l + 1} \circ \left(\sum_{j_3 \geq k + l + 1}\FF_{j_3}\right) \circ \FC \circ \left(\left(\left(\sum_{j_1 \geq 2g - k}\FF^{-1}_{j_1}\right) \circ \Fp_k\right) \times \left(\left(\sum_{j_2 \geq 2g - l}\FF^{-1}_{j_2}\right) \circ \Fp_l\right)\right).
\end{aligned}
\end{equation}
We observe that
\begin{gather*}
\sum_{j_1 \geq 2g - k}\FF^{-1}_{j_1} \in \Corr_B^{\geq g - k}(M, M^\vee), \quad \sum_{j_2 \geq 2g - l}\FF^{-1}_{j_2} \in \Corr_B^{\geq g - l}(M, M^\vee), \\
\sum_{j_3 \geq k + l + 1}\FF_{j_3} \in \Corr_B^{\geq k + l + 1 - g}(M^\vee, M).
\end{gather*}
This, together with the degree estimate \eqref{eq:convdim}, yields for \eqref{eq:estimate} a total degree of
\[
\geq -g  + (g - k) + (g - l) + (k + l + 1 - g)  = 1.
\]
In other words, we find $\Fq_{k + l + 1} \circ [\Delta^{\mathrm{sm}}_{M/B}] \circ (\Fp_k \times \Fp_l)$ to lie strictly in $\Corr^{\geq 1}_B(M, M; M)$ which, compared with \eqref{eq:convcomp}, implies the vanishing
\[
\Fq_{k + l + 1} \circ [\Delta^{\mathrm{sm}}_{M/B}] \circ (\Fp_k \times \Fp_l) = 0.
\]
This proves the multiplicativity statement.

\subsubsection{Perverse $\supset$ Chern} \label{Sec2.5.4}
The final statement amounts to showing that
\[
\Fq_{k + 1} \circ \FF_k = 0 \in \Corr_B^{k - g}(M^\vee, M),
\]
which follows immediately from (FV) since
\[
\Fq_{k + 1} \circ \FF_k = \left(\sum_{i \geq k + 1}\FF_i \circ \FF^{-1}_{2g - i}\right) \circ \FF_k = \sum_{i \geq k + 1}\FF_i \circ \left( \FF^{-1}_{2g - i} \circ \FF_k\right) = 0.
\]
The proof of Theorem \ref{thm:main} is now complete. \qed

\subsubsection{Motivic decomposition} \label{Sec2.5.5}
Finally, we note that our construction yields a motivic decomposition of $\pi: M \to B$ which proves Corollary \ref{motivic_decomp}.

One subtlety of the sequence of motives $P_kh(M)$ is that we do not know if $P_kh(M)$ is a summand of $P_lh(M)$ for $k < l$. While the condition (FV) immediately implies
\begin{equation} \label{eq:submot}
\Fp_l \circ \Fp_k = \Fp_k, \quad k < l,
\end{equation}
it is unclear if $\Fp_k \circ \Fp_l = \Fp_k$. However, we can fix this issue by introducing a minor modification
\[
\widetilde{\Fp}_k := \Fp_k \circ \cdots \circ \Fp_{2g} \in \Corr^0_B(M, M), \quad \overline{\Fp}_k := \widetilde{\Fp}_k - \widetilde{\Fp}_{k - 1} \in \Corr^0_B(M, M).
\]
Now it is easy to verify that both $\widetilde{\Fp}_k$ and $\overline{\Fp}_k$ are projectors, and
\[
\widetilde{\Fp}_k \circ \widetilde{\Fp}_l = \widetilde{\Fp}_l \circ \widetilde{\Fp}_k = \widetilde{\Fp}_k, \quad k < l.
\]
For instance, we have by \eqref{eq:submot}
\[
\widetilde{\Fp}_k \circ \widetilde{\Fp}_l = \Fp_k \circ \cdots \circ \Fp_{2g} \circ \Fp_l \circ \cdots \circ \Fp_{2g} = \Fp_k \circ \cdots \circ \Fp_{2g} = \widetilde{\Fp}_k, \quad k \leq l.
\]
The resulting motives $\widetilde{P}_kh(M) = (M, \widetilde{\Fp}_k, 0)$ and $h_k(M) = (M, \overline{\Fp}_k, 0)$ satisfy
\[
\widetilde{P}_kh(M) = \widetilde{P}_{k - 1}h(M) \oplus h_k(M) \in \mathrm{CHM}(B).
\]
By induction, we also find that the $\overline{\Fp}_k$ are mutually orthogonal projectors giving rise to a motivic decomposition
\begin{equation} \label{eq:motbbd}
h(M) = \bigoplus_{i = 0}^{2g} h_i(M) \in \mathrm{CHM}(B), \quad h_i(M) = (M, \overline{\Fp}_k, 0).
\end{equation}
The support argument of Section \ref{Sec2.5.2} then shows that the homological realization of \eqref{eq:motbbd} provides a decomposition
\[
\pi_*\BQ_M = \bigoplus_{i = 0}^{2g}{^\mathfrak{p}}\CH^{i + \dim B}(\pi_* \BQ_M)[-i - \dim B] \in D^b_c(B).
\]
This finishes the proof of Corollary \ref{motivic_decomp}. \qed

\section{Compactified Jacobians}\label{comp_jac}

\subsection{Overview} \label{Sec3.1}
In this section, we treat the geometry of (degree $0$) compactified Jacobian fibrations; our main purpose is to prove Theorem \ref{thm0.2}.

Throughout this section, we let $C \to B$ be a flat family of integral projective curves of arithmetic genus $g$ with planar singularities over an irreducible base $B$. We further assume that $C \to B$ admits a section with image contained in its smooth locus. We denote by $\pi: \overline{J}_C \to B$ the compactified Jacobian fibration, and assume that the total space~$\overline{J}_C$ is nonsingular. In particular, $B$ is nonsingular and~$\pi: \overline{J}_C \to B$ is flat with integral fibers of dimension $g$.

At the end of this section, we discuss compactified Jacobian fibrations of arbitrary degree still in the presence of a section to $C \to B$, in which case changes to the arguments are minimal. The assumption on the existence of a section will be removed in Section \ref{Sec:twist} using a stack version of our theory.

\subsection{Compactified Jacobians}\label{Sec3.2}

Let $J_C \subset \overline{J}_C$ be the Jacobian fibration parameterizing line bundles. It is an open dense subset of $\overline{J}_C$, which forms a nonsingular commutative group scheme over $B$.

The Fourier--Mukai theory of compactified Jacobian fibrations was developed by Arinkin~\cite{A1, A2}. More precisely, Arinkin constructed in \cite{A1} the normalized Poincar\'e line bundle $\CP$ over the product $J_C \times_B \overline{J}_C$ extending the standard normalized Poincar\'e line bundle for Jacobians associated with nonsingular curves. Then in \cite{A2} Arinkin showed that the sheaf $j_*\CP$, where $j: J_C \times_B \overline{J}_C \hookrightarrow \overline{J}_C \times_B \overline{J}_C$ is the natural open embedding, is a Cohen--Macaulay coherent sheaf (which we still denote by $\CP$ for notational convenience) on $\overline{J}_C \times_B \overline{J}_C$; he further proved that $\CP$ induces a derived auto-equivalence
\[
\mathrm{FM}_{\CP}: D^b\mathrm{Coh}(\overline{J}_C) \xrightarrow{~~\simeq~~} D^b\mathrm{Coh}(\overline{J}_C).
\]
The inverse of this transform is induced by
\[
\CP^{-1} : = \CP^\vee \otimes \pi_2^* \omega_{\pi}[g], \quad \CP^\vee: = {\CH}om(\CP, \CO_{\overline{J}_C\times_B \overline{J}_C}).
\]
Here $\omega_\pi$ is the relative canonical bundle with respect to $\pi: \overline{J}_C \to B$, and $\pi_2: \overline{J}_C \times_B \overline{J}_C \to B$ is the natural morphism.

Furthermore, the support theorem of Ng\^o \cite{Ngo} and the Severi inequality \cite[Lemma 4.1]{MS} ensure that the decomposition theorem associated with $\pi: \overline{J}_C \to B$ has full support. Therefore, in order to prove Theorem \ref{thm0.2}, it remains to verify:
\begin{enumerate}
    \item[(i)] the convolution kernel
    \begin{equation}\label{K_kernel}
    \CK \in D^b\mathrm{Coh}(\overline{J}_C \times_B \overline{J}_C \times_B \overline{J}_C)
    \end{equation}
    is of codimension $g$;
    \item[(ii)] the condition (FV) holds for $\pi: \overline{J}_C \to B$.
\end{enumerate}
We will discuss (i), which is a result of Arinkin, in Section \ref{AV}, and will discuss (ii) in Section~\ref{Sec3.5}.

We conclude this section by recalling the \emph{canonical} perverse filtration constructed in \cite{MY} for the compactified Jacobian~$\overline{J}_{C_0}$ associated with \emph{any} integral projective curve $C_0$ with planar singularities; this is the filtration used in Theorem \ref{thm0.5}. For such a $C_0$, we include it in a family of curves $C \to B$ over an irreducible $B$ such that the associated compactified Jacobian fibration $\pi: \overline{J}_C \to B$ is nonsingular and $\overline{J}_{C_0} \subset \overline{J}_C$ is the closed fiber over $0 \in B$. Then the construction of Section \ref{1.3.3} yields a perverse filtration 
\[
P_0H^*(\overline{J}_{C_0}, \BQ) \subset P_1H^*(\overline{J}_{C_0}, \BQ) \subset  \cdots \subset H^*(\overline{J}_{C_0}, \BQ)
\]
via the morphism $\pi: \overline{J}_C \to B$.

\begin{thm}[{Maulik--Yun \cite[Theorem 1.1]{MY}}]\label{thm1.4}
The perverse filtration defined above is independent of the choice of the family $C \to B$ of curves.
\end{thm}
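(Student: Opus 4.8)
## Proof proposal for Theorem \ref{thm1.4}

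The plan is to show that the perverse filtration on $H^*(\overline{J}_{C_0},\BQ)$ only depends on a local analytic neighborhood of the singular fiber, and then to reduce any two families to a common one by a deformation argument. The first and most flexible observation is that the formation of the perverse truncation $\,{}^{\mathfrak p}\tau_{\leq k+\dim B}\pi_*\BQ_{\overline{J}_C}$ commutes with restriction to a smooth (or more generally, normally nonsingular) locally closed base change; combined with the support condition --- which in this setting is supplied by Ng\^o's support theorem together with the Severi inequality \cite[Lemma 4.1]{MS}, exactly as invoked in Section \ref{Sec3.2} --- the stalk $\CH^d({}^{\mathfrak p}\tau_{\leq k+\dim B}\pi_*\BQ_{\overline{J}_C})_0$ is computed by the multiplicities of the simple summands at $0$, which in turn are governed by the local monodromy of the smooth part $J_C\to B$ around $0$. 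The key point is that all simple perverse summands have full support $B$, so there are no ``extra'' summands supported on proper subvarieties whose appearance could depend on global features of the family; the perverse filtration at a point is therefore determined by the variation of Hodge structure of the abelian-scheme part near that point.

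Next I would make this precise by passing to a versal deformation. For an integral projective curve $C_0$ with planar singularities, deformations are governed by the local (mini)versal deformation spaces of the individual planar singularities; let $\mathcal{C}\to\Delta$ be a family realizing $C_0$ as the central fiber over a smooth base $\Delta$ with $\overline{J}_{\mathcal C}$ nonsingular, chosen large enough that the induced map on deformation spaces is dominant. Given any family $C\to B$ as in the statement, the classifying map $B\to\Delta$ (after shrinking $B$ analytically around $0$ and possibly base-changing) fits into a commutative square, and $\overline{J}_C$ is pulled back from $\overline{J}_{\mathcal C}$; then I would invoke base-change compatibility of perverse truncation along the smooth-ish map $B\to\Delta$ --- here one uses $\delta$-regularity / dimension estimates to ensure the morphism $B\to\Delta$ is suitably generic with respect to the support stratification, so that ${}^{\mathfrak p}\tau_{\leq\bullet}$ is preserved up to the appropriate shift --- to identify the local perverse filtration on $H^*(\overline{J}_{C_0})$ computed from $C\to B$ with the one computed from $\mathcal{C}\to\Delta$. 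Since the versal family is a single fixed family, this proves independence.

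An alternative, perhaps cleaner, route is the one I would actually try to write down: show directly that for two families $C\to B$ and $C'\to B'$ through $C_0$, after localizing both bases analytically at $0$ and $0'$, there is a third family $C''\to B''$ and maps $B''\to B$, $B''\to B'$ (étale-locally, or smooth with connected fibers) pulling back $C,C'$ respectively --- this exists precisely because both $B$ and $B'$ map to the versal base. Then one reduces to the assertion: if $\rho:B''\to B$ is smooth with $\overline{J}_{C''}=\overline{J}_C\times_B B''$, and both total spaces are nonsingular, then the local perverse filtrations at corresponding points agree. This last assertion follows from the fact that $\rho^*$ of the decomposition theorem for $\pi$ is the decomposition theorem for $\pi''$ (using again that all summands have full support, so no perverse summand can be ``created'' or ``destroyed'' under smooth base change), whence the perverse truncations match on stalks after the shift by $\dim B''-\dim B$ built into the normalization $\,{}^{\mathfrak p}\tau_{\leq k+\dim B}$.

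The main obstacle is the base-change step: perverse truncation does \emph{not} commute with arbitrary base change, and one genuinely needs both the smoothness (or normal nonsingularity) of the base-change morphism \emph{and} the full-support property to control how ${}^{\mathfrak p}\CH^i$ behaves. Concretely, the danger is that $B\to\Delta$ (or $B''\to B$) could be non-transverse to the discriminant stratification in a way that shifts perversities; ruling this out is where the $\delta$-regularity coming from the Severi inequality does the real work, exactly as it does in guaranteeing full support in the first place. Handling families without a section would require the twisted/gerby enhancement of Section \ref{Sec:twist}, but for the stated theorem one may assume a section is unnecessary since the local perverse filtration on a single $\overline{J}_{C_0}$ is intrinsic and the auxiliary family can always be chosen with a section (e.g.\ by adding a marked smooth point).
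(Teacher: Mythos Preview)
The paper does not actually prove this theorem: its ``proof'' consists of a citation to \cite[Theorem 1.1]{MY} together with the remark that \cite[Proposition 2.15]{MY} weakens the hypotheses there to mere smoothness of $\overline{J}_C$. So you are not being asked to compare with an argument in the present paper; you are reconstructing the proof of the cited reference.

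Your sketch is broadly in the spirit of Maulik--Yun: pass to a (mini)versal deformation $\mathcal{C}\to\Delta$ of $C_0$, compare any given family $C\to B$ with it via a classifying map, and use that smooth base change commutes with perverse truncation up to a shift. That is indeed the skeleton of their argument. However, you correctly flag the main obstacle and then do not actually resolve it. The classifying map $B\to\Delta$ is in general neither smooth nor transverse to anything, and your invocation of ``$\delta$-regularity / dimension estimates to ensure the morphism $B\to\Delta$ is suitably generic'' is exactly the point that requires proof. The full-support condition on $\Delta$ alone does not imply that arbitrary pullbacks preserve the perverse filtration on stalks; one genuinely needs control on $B\to\Delta$. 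In \cite{MY} this is handled by showing (this is essentially \cite[Proposition 2.15]{MY}) that the hypothesis ``$\overline{J}_C$ nonsingular'' forces $B\to\Delta$ to be transverse to the $\delta$-stratification, which in turn allows one to factor through a family over which the comparison is legitimate. Your ``alternative route'' via a common roof $B''$ has the same gap: the fiber product $B\times_\Delta B'$ has no reason to be smooth over $B$ or $B'$ without that transversality input.

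So the approach is right, but the sentence ``ruling this out is where the $\delta$-regularity coming from the Severi inequality does the real work'' is a placeholder for the substantive step, not a proof of it. If you want a self-contained argument, you should state and prove the transversality assertion (smoothness of $\overline{J}_C$ $\Rightarrow$ $B\to\Delta$ is transverse to the $\delta$-constant stratification), or equivalently embed $C\to B$ into a product family $C\times_B(B\times\Delta')\to B\times\Delta'$ that dominates the versal base smoothly.
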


\begin{proof}
In \cite[Theorem 1.1]{MY}, the family $C \to B$ is chosen with certain assumptions. However, as pointed out in \cite[Proposition 2.15]{MY}, the smoothness of $\overline{J}_C$ suffices.
\end{proof}

In particular, Theorem \ref{thm0.5} follows from Theorem \ref{thm:main}(iii) and Theorem \ref{thm0.2}, where we include~$C_0$ in a family $\pi: C \to B$ as a closed fiber.

\subsection{Arinkin's dimension bound and the convolution kernel} \label{AV}

We first give an expression of the convolution kernel (\ref{K_kernel}). Denote by $\overline{J}_C^{n+1}$ the $(n+1)$-th relative product. It carries natural projections
\[
\pi_{n + 1}: \overline{J}_C^{n+1} \to B, \quad p_i: \overline{J}_C^{n + 1} \to \overline{J}_C, \quad p_{i,j}: \overline{J}_C^{n + 1} \to \overline{J}^2_C, \quad \cdots .
\]
We consider the object
\[
\CK_n: = p_{1,\dots,n, *}\left( p_{1,n+1}^* \CP \otimes p_{2,n+1}^* \CP \otimes \cdots \otimes p_{n,n+1}^* \CP \right) \otimes \pi_{n}^* \omega_\pi[g]   \in D^b\mathrm{Coh} ( \overline{J}_C^n ).
\]
Here the boundedness of $\CK_n$ is a consequence of the flatness of $\CP$ with respect to both factors \cite[Theorem A and Lemma 6.1]{A2}. Let $\nu: \overline{J}_C \to \overline{J}_C$ be the involution sending a sheaf to its dual, so that we can write
\[
\CP^\vee = (\mathrm{id}_{\overline{J}_C} \times_B \nu)^* \CP.
\]
Then by the defining equation
\[
\CK \simeq \CP^{-1} \circ \CO_{\Delta_{\overline{J}_C/B}^{\mathrm{sm}}} \circ (\CP \boxtimes \CP)
\]
obtained from \eqref{convolution}, and again by the flatness of $\CP$ with respect to both factors, we may express the convolution kernel (\ref{K_kernel}) as
\[
\CK = \left(\mathrm{id}_{\overline{J}_{C}\times_B \overline{J}_C}  \times_B \nu \right)^* \CK_3.
\]

Therefore, the condition (i) of Section \ref{Sec3.2} is a special case of the following result of Arinkin~\cite[Section 7.1]{A2}.

\begin{prop}[Arinkin]\label{Arinkin_vanishing}
    For any $n$, we have
    \[
    \mathrm{codim}_{\overline{J}^n_C} (\mathrm{supp}( \CK_n ) ) \geq g.
    \]

\end{prop}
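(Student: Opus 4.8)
The plan is to argue fibrewise over $B$ and then reduce, by the structure of the Poincar\'e sheaf $\CP$, to a statement entirely about the Jacobian $J_{C_b}$ of a single integral locally planar curve $C_b$. The essential point is that, over the locus $U\subset B$ where the fibres are nonsingular, the support of $\CK_n$ is easy to compute: restricting $\CP$ to $J_{C_b}\times_B J_{C_b}$ gives the honest normalized Poincar\'e line bundle, and the push-forward defining $\CK_n$ restricts over $J_{C_b}^{n-1}$ to a Fourier-type transform whose support is precisely the "sum-zero" subvariety $\{(l_1,\dots,l_n)\,:\,l_1\otimes\cdots\otimes l_n\simeq\CO\}$, which has codimension exactly $g$ in $\overline{J}^n_{C}|_U$. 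So the content of the proposition is entirely concentrated over the singular locus of $C\to B$, i.e.\ over the "boundary" where $\CP$ ceases to be a line bundle.

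First I would set up the dimension count on a fibre $\overline{J}_{C_0}$ over a point $b\in B$ with $C_0$ integral and locally planar of arithmetic genus $g$. Here one uses Arinkin's key geometric input from \cite{A1,A2}: the support of $\CP$ on $\overline{J}_{C_0}\times\overline{J}_{C_0}$ is a divisor (the total space of $\CP$ is Cohen--Macaulay of the expected dimension), and more precisely Arinkin's \cite[Section 7.1]{A2} analysis identifies a stratification of $\overline{J}_{C_0}$ by the type of the torsion-free rank-one sheaf, with the restriction of $\CP$ over each stratum behaving like a family of line bundles on a partial normalization, for which a Fourier--Mukai/theta-divisor computation applies. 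The quantity $\mathrm{codim}\,\mathrm{supp}(\CK_n)$ is then bounded below by taking the minimum, over all such strata in the $n$-fold product, of (codimension of the stratum) $+$ (codimension, within that stratum, of the locus where the tensor product of the $n$ sheaves becomes trivial). The $\delta$-invariant bookkeeping of locally planar singularities -- precisely the input behind the Severi inequality \cite[Lemma 4.1]{MS} cited in Section \ref{Sec3.2} -- makes this minimum equal to $g$: moving off a stratum of codimension $\delta$ costs $\delta$, but it also "frees up" $\delta$ extra dimensions of line-bundle directions in which the tensor-product constraint can be satisfied, so the two contributions always add to $g$. I would then globalize: since $C\to B$ is flat with $\overline{J}_C$ and all fibres of dimension $g$, and the construction of $\CK_n$ commutes with base change to fibres (using the flatness of $\CP$ over both factors, \cite[Theorem A, Lemma 6.1]{A2}), the fibrewise codimension bound upgrades to the asserted bound over $\overline{J}^n_C$, the twist by $\pi_n^*\omega_\pi[g]$ being an invertible twist and a shift that do not affect supports.

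The main obstacle is the fibrewise codimension estimate over the singular strata -- i.e.\ showing that the "$\delta$ costs $\delta$, gains $\delta$" cancellation really holds uniformly for \emph{all} integral locally planar curves and \emph{all} $n$. This is exactly where one must invoke Arinkin's explicit description of $\CP$ near the boundary and combine it with the numerics of $\delta$-invariants; it is not a soft argument. Everything else -- the computation over $U$, base-change compatibility, and passing from fibres to the total space -- is routine once this local input is in hand. Since the proposition is attributed to Arinkin \cite[Section 7.1]{A2}, I expect the paper's proof to essentially cite that reference for the fibrewise bound and then perform the (short) globalization step sketched above.
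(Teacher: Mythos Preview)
Your overall architecture --- reduce to a fibrewise estimate and then globalize --- matches the paper's, but the fibrewise estimate you aim for is too strong, and the role of the Severi inequality is misplaced. You try to show that on each closed fibre $\overline{J}_{C_b}^{\,n}$ the support of $\CK_n$ already has codimension $g$, via a stratification of $\overline{J}_{C_b}$ and a ``$\delta$ costs $\delta$, gains $\delta$'' cancellation. But what Arinkin's argument in \cite[Section 7.1]{A2} actually yields on a single fibre is only codimension $g_a(\widetilde{C}_b)=g-\delta(C_b)$: if $(F_1,\dots,F_n)$ lies in the support then $H^*(\overline{J}_{C_b},\bigotimes_i\CP_{F_i})\neq 0$, which forces $\bigotimes_i(\CP_{F_i}|_{J_{C_b}})\simeq\CO_{J_{C_b}}$, and via the Abel--Jacobi map this becomes $\bigotimes_i F_i|_{C_b^{\mathrm{reg}}}\simeq\CO_{C_b^{\mathrm{reg}}}$, a codimension-$g_a(\widetilde{C}_b)$ condition. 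The fibrewise codimension cannot be improved to $g$ in general by stratifying $\overline{J}_{C_b}$; in particular the Severi inequality says nothing about strata inside $\overline{J}_{C_b}$.

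The missing $\delta(C_b)$ comes from the \emph{base}, not the fibre: the Severi inequality $\mathrm{codim}_B(Z)\geq\delta_Z$ (which holds because $\overline{J}_C$ is nonsingular, \cite[Lemma 4.1]{MS}) bounds the codimension in $B$ of the locus of curves with given $\delta$-invariant. The paper's proof combines these two inputs additively: fibrewise codimension $\geq g-\delta(C_b)$ plus base codimension $\geq\delta(C_b)$ gives total codimension $\geq g$ in $\overline{J}_C^{\,n}$. So your ``globalization step'' is not routine --- it is exactly where Severi enters and where the remaining $\delta$ is recovered.
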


Note that $\CK_n$ is a classical object for low values of $n$. When $n=1$ the proposition was a step towards establishing \cite[Theorem 1.1]{A1} concerning the Fourier--Mukai transform of the structure sheaf. When $n=2$, the proposition was used in \cite{A2} to show that $\mathrm{FM}_\CP$ is a derived equivalence. The same argument works for the general case, which we review as follows.

\begin{proof}[Proof of Proposition \ref{Arinkin_vanishing}]
We first fix some notation. For an integral projective curve $C_b$, we denote by $g_a(C_b)$ its arithmetic genus, and $\widetilde{C}_b$ its normalization. We denote by $\delta(C_b) \in \BZ_{\geq 0}$ the dimension of the maximal affine group of $\mathrm{Pic}(C_b)$. Clearly, when $C_b$ has planar singularities, we have
\[
\delta(C_b) = g_a(C_b) - g_a(\widetilde{C}_b). 
\]
This yields a constructible function 
\[
\delta: B \to \BZ_{\geq 0}, \quad b \mapsto \delta(C_b)
\]
associated with $C \to B$. For an irreducible subvariety $Z \subset B$, we define $\delta(Z)$ to be the value of $\delta(b)$ at a general point $b\in Z$.

\medskip
\noindent {\bf Step 1.} Recall that the smoothness of $\overline{J}_C$ yields the Severi inequality \cite[Lemma 4.1]{MS}
\[
\mathrm{codim}_B (Z) \geq \delta_Z
\]
for any irreducible $Z\subset B$. Therefore, it suffices to show that for any curve $C_b$ in the family $C \to B$, we have
\[
\mathrm{codim}_{\overline{J}_{C_b}}\left(\mathrm{supp}( \CK_n )\cap \overline{J}_{C_b}\right) \geq g_a(\widetilde{C}_b).
\]

\medskip
\noindent {\bf Step 2.} Now we focus on a fixed curve $C_b$. For a point $F \in \overline{J}_{C_b}$, we denote by $\CP_{F}$ the restriction of $\CP$ to $\overline{J}_{C_b}=\overline{J}_{C_b} \times \{F\}$. If we further restrict this sheaf to the open subset $J_{C_b} \subset \overline{J}_{C_b}$ parameterizing line bundles on $C_b$, it is a line bundle.

If $(F_1, F_2,\cdots, F_n) \in \overline{J}^n_{C_b}$ lies in the support of $\CK_n$, then there is a nontrivial cohomology
\begin{equation}\label{non-van}
H^*(\overline{J}_{C_b}, \CP_{F_1} \otimes \CP_{F_2} \otimes  \cdots \otimes \CP_{F_n} ) \neq 0
\end{equation}
By the same argument as in \cite[Proposition 7.2]{A2}, (\ref{non-van}) ensures that 
\begin{equation}\label{condi}
\bigotimes_{i=1}^n (\CP_{F_i}|_{J_{C_b}} ) \simeq \CO_{J_{C_b}}.
\end{equation}

\medskip
\noindent {\bf Step 3.} Finally, we claim that (\ref{condi}) expresses a codimension $g_a(\widetilde{C}_b)$ condition for 
\[
(F_1, F_2,\cdots, F_n) \in \overline{J}^n_{C_b}.
\]
This was explained in \cite[Corollary 7.3 and Proposition 7.4]{A2}. Indeed, pulling back the isomorphism (\ref{condi}) via the Abel--Jacobi map $C_b \hookrightarrow \overline{J}_{C_b}$, we obtain that 
\begin{equation}\label{condi2}
\bigotimes_{i=1}^n ({F_i}|_{C^{\mathrm{reg}}_b} ) \simeq \CO_{C^{\mathrm{reg}}_b}.
\end{equation}
Then the same argument as in \cite[Proposition 7.4]{A2} applied to the action morphism
\[
\mu \times \mathrm{id}_{\overline{J}_{C_b}}: J_{C_b} \times \overline{J}^n_{C_b} \to  \overline{J}^n_{C_b}
\]
shows that (\ref{condi2}) is a codimension $g_a(\widetilde{C}_b)$ condition. This proves Proposition~\ref{Arinkin_vanishing}.
\end{proof}

We have thus verified (i) of Section \ref{Sec3.2}.

\subsection{Remarks on the Fourier vanishing for abelian schemes}

We first discuss two proofs of (FV) for abelian schemes.

As in Section \ref{Sec1.2}, we let $\pi: A \to B$ be an abelian scheme of relative dimension $g$ with $\pi^\vee: A^\vee \to B$ its dual. Then by Lemma \ref{lem:ff-1}, for degree reasons we have for any $k \neq 2g$,
\begin{equation}\label{eqn41}
\sum_{i + j = k}\FF^{-1}_i \circ \FF_j = 0 \in \Corr_B^{k - 2g}(A^\vee, A^\vee).
\end{equation}
The idea of \cite{B,DM} is that, if we apply the pullback associated with the multiplication by $N$ map on the first factor 
\[
[N]: A^\vee \times_B A^\vee \to A^\vee \times_B A^\vee,
\]
we may scale each individual term of (\ref{eqn41}) by a different constant $N^j$:
\begin{equation}\label{eqn42}
\sum_{i + j = k}N^j\FF^{-1}_i \circ \FF_j = 0.
\end{equation}
Since this holds for every $N$, we obtain the desired vanishing 
\[
\FF^{-1}_i \circ \FF_j = 0, \quad i+j \neq 2g.
\]

However, this argument does not work for abelian fibrations with singular fibers, since it relies on the facts that there is a multiplication by $N$ map on the total space $A^\vee$ and the normalized Poincar\'e complex is a line bundle. Both fail when there are singular fibers.

We now present another proof of the weaker version (FV) whose idea can be generalized. For convenience, we focus on the Jacobian fibration $\pi: J_C \to B$ associated with a family of nonsingular projective curves $C \to B$ of genus $g$. Recall that we have the normalized Poincar\'e line bundle $\CL$. The discussion in Section \ref{AV} actually yields the dimension bound
\[
\mathrm{codim}_{J^2_C} \left(\mathrm{supp}( \CL^{-1} \circ \CL^{\otimes N}   ) \right) \geq g
\]
for any $N \in \BZ$. Therefore, if we take the Chern character of $\CL^{-1} \circ \CL^{\otimes N}$, we obtain the equation~(\ref{eqn42}) only for $i+j < 2g$. This yields (FV) for $\pi: J_C \to B$. 

The second proof above relies on the following two ingredients:
\begin{enumerate}
    \item[(i)] the Arinkin-type dimension bound of Section \ref{AV}, and
    \item[(ii)] the $K$-theoretic operations $(-)^{\otimes N}$ which help us to scale the Chern character of different degrees distinctly.
\end{enumerate}
In the next section, we will generalize them to treat $\pi: \overline{J}_C \to B$ where the second ingredient is replaced by the Adams operations from $K$-theory.

\subsection{Adams operations and Fourier vanishing} \label{Sec3.5}

We now verify the condition (FV) for $\pi: \overline{J}_C \to B$, which proves (ii) of Section \ref{Sec3.2}. Consequently, this completes the proofs of Theorems \ref{thm0.2}, \ref{CoHa}, and \ref{thm0.5}.

\subsubsection{Dimension bound}
We consider the closed embedding
\begin{equation} \label{eq:regemb}
i: \overline{J}_C \times_B \overline{J}_C  \hookrightarrow  \overline{J}_C \times \overline{J}_C.
\end{equation}
For any $N \in \BZ_{>0}$, the object 
\[
(i_*\CP)^{\otimes N} \in D^b\mathrm{Coh}(\overline{J}_C \times \overline{J}_C)
\]
is bounded, and is exact off $\overline{J}_C \times_B \overline{J}_C \subset \overline{J}_C \times \overline{J}_C$. 
Then we consider 
\[
\widetilde{\CK}(N): = \CP^{-1} \circ (i_*\CP)^{\otimes N} \in D^b\mathrm{Coh}(\overline{J}_C \times \overline{J}_C).
\]
By definition (and by the flatness of $\CP$ with respect to both factors), it is constructed by pulling back the two objects $(i_*\CP)^{\otimes N}, \CP^{-1}$ to the triple product 
\[
\overline{J}_C \times \overline{J}_C \times_B \overline{J}_C,
\]
taking their tensor product, and pushing forward to the first and the third factors.

\begin{prop}
For any $N$, the object $\widetilde{\CK}(N) \in D^b\mathrm{Coh}(\overline{J}_C \times \overline{J}_C)$ is supported on a codimension $g$ subset of $\overline{J}_C \times_B \overline{J}_C$.
\end{prop}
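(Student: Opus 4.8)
The plan is to reproduce the proof of Proposition~\ref{Arinkin_vanishing} almost verbatim, with $\widetilde{\CK}(N)=\CP^{-1}\circ(i_*\CP)^{\otimes N}$ playing the role of $\CK_n$; the only genuinely new input is that, on the locus where $\CP$ is a line bundle, the $N$-fold self-tensor power $(i_*\CP)^{\otimes N}$ restricts to the $N$-th power of that line bundle, so all the ``regular-locus'' conditions simply acquire a harmless factor of $N$. First I would reduce to a fiberwise statement. Since $i_*\CP$, hence $(i_*\CP)^{\otimes N}$ and $\widetilde{\CK}(N)$, is set-theoretically supported on $\overline{J}_C\times_B\overline{J}_C$, and since the smoothness of $\overline{J}_C$ yields the Severi inequality $\mathrm{codim}_B(Z)\geq\delta(Z)$ for irreducible $Z\subset B$ (cf.~Step~1 of the proof of Proposition~\ref{Arinkin_vanishing} and \cite[Lemma~4.1]{MS}), it suffices to prove that for every $b\in B$
\[
\mathrm{codim}_{\overline{J}_{C_b}\times\overline{J}_{C_b}}\!\Big(\mathrm{supp}\big(\widetilde{\CK}(N)\big)\cap\big(\overline{J}_{C_b}\times\overline{J}_{C_b}\big)\Big)\ \geq\ g_a(\widetilde{C}_b),
\]
where $\widetilde{C}_b$ is the normalization of $C_b$.

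Next I would identify the support on a fiber. By the description of $\widetilde{\CK}(N)$ as a convolution and proper base change along the (proper) projection to the first and third factors from the triple product $\overline{J}_C\times\overline{J}_C\times_B\overline{J}_C$, a pair $(F_1,F_3)\in\overline{J}_{C_b}\times\overline{J}_{C_b}$ lies in $\mathrm{supp}(\widetilde{\CK}(N))$ if and only if
\[
R\Gamma\!\Big(\overline{J}_{C_b},\ (i_*\CP)^{\otimes N}\big|_{\{F_1\}\times\overline{J}_{C_b}}\otimes\CP^{-1}\big|_{\overline{J}_{C_b}\times\{F_3\}}\Big)\ \neq\ 0,
\]
the $R\Gamma$ being over the middle ($F_2$) copy of $\overline{J}_{C_b}$. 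Restricting the $F_2$-variable to the open locus $J_{C_b}$ of line bundles, where $\CP$ is a line bundle, the complex inside $R\Gamma$ becomes --- up to the fixed line bundle $\omega_\pi|_{J_{C_b}}=\CO_{J_{C_b}}$, a homological shift, and tensor factors by trivial exterior powers of the conormal bundle of $i$ --- the line bundle $\big(\CP|_{\{F_1\}\times J_{C_b}}\big)^{\otimes N}\otimes\big(\CP|_{J_{C_b}\times\{F_3\}}\big)^{-1}$ on $J_{C_b}$. Running Arinkin's vanishing for $\CP$-twisted complexes (\cite[Section~7]{A2}), exactly as in Step~2 of the proof of Proposition~\ref{Arinkin_vanishing}, the non-vanishing of the global $R\Gamma$ over $\overline{J}_{C_b}$ forces this line bundle on $J_{C_b}$ to be trivial, hence --- pulling back along the Abel--Jacobi map $C_b^{\mathrm{reg}}\hookrightarrow J_{C_b}$ in each slot, as there --- to a condition of the shape
\[
\big(F_1|_{C_b^{\mathrm{reg}}}\big)^{\otimes N}\ \simeq\ F_3|_{C_b^{\mathrm{reg}}}\otimes(\text{a fixed line bundle on }C_b^{\mathrm{reg}}).
\]

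The codimension of this locus in $\overline{J}_{C_b}\times\overline{J}_{C_b}$ is then bounded exactly as in Step~3 of the proof of Proposition~\ref{Arinkin_vanishing}: either by applying the argument of \cite[Proposition~7.4]{A2} to the action of $J_{C_b}$ on $\overline{J}_{C_b}$, or by observing that for fixed $F_1$ the admissible $F_3$ form a fiber of the restriction morphism $\overline{J}_{C_b}\to\mathrm{Pic}(C_b^{\mathrm{reg}})$, whose image is the image of the abelian variety $\mathrm{Pic}^0(\widetilde{C}_b)$ and therefore has dimension $g_a(\widetilde{C}_b)$; in either case the locus has codimension $g_a(\widetilde{C}_b)$, and precomposing with multiplication by $N$ on $\mathrm{Pic}^0(\widetilde{C}_b)$ affects neither this dimension nor the twist, so the presence of $N$ is immaterial. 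Combining this with the Severi reduction of the first step gives $\mathrm{codim}_{\overline{J}_C\times_B\overline{J}_C}\big(\mathrm{supp}\,\widetilde{\CK}(N)\big)\geq g$, as claimed.

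I expect the only step requiring real care to be the second one: checking that Arinkin's vanishing/non-vanishing dichotomy for $\CP$-twisted complexes (\cite[Section~7]{A2}) applies to the mixed object $(i_*\CP)^{\otimes N}|_{\{F_1\}\times\overline{J}_{C_b}}\otimes\CP^{-1}|_{\overline{J}_{C_b}\times\{F_3\}}$ rather than to a single restriction of $\CP$. Concretely, one must check that $\widetilde{\CK}(N)$ is bounded (which follows from the flatness of $\CP$ over both factors, \cite[Theorem~A and Lemma~6.1]{A2}, as for $\CK_n$); that all derived corrections introduced by the self-tensor power and the various base changes are either supported off the line bundle locus $J_{C_b}$ or consist of tensoring with trivial bundles, so that they do not affect the restriction argument; and that the character of the restricted complex detected by Arinkin's vanishing on $\overline{J}_{C_b}$ is precisely the class of $\big(\CP|_{\{F_1\}\times J_{C_b}}\big)^{\otimes N}\otimes\big(\CP|_{J_{C_b}\times\{F_3\}}\big)^{-1}$ in $\mathrm{Pic}^0(J_{C_b})$. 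With that in hand, the first and third steps are routine, being identical to the corresponding steps in the proof of Proposition~\ref{Arinkin_vanishing}.
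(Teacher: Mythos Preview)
Your overall strategy --- Severi reduction to fibres, then deducing a codimension-$g_a(\widetilde C_b)$ condition from non-vanishing of cohomology on $\overline J_{C_b}$ --- matches the paper's exactly. The gap is in your second step, and it is precisely the one you flag but do not resolve. Writing $\iota:\overline J_{C_b}\hookrightarrow\overline J_C$ for the fibre inclusion, the object whose cohomology you must control on $\overline J_{C_b}$ is $(\iota^*\iota_*\CP_{F_1})^{\otimes N}\otimes\CP^\vee_{F_3}$. Restricting the middle variable to the open $J_{C_b}$ does not help: Arinkin's vanishing criterion \cite[Proposition~7.5]{A2} is a statement about cohomology on the \emph{full} $\overline J_{C_b}$ and uses that the object there is a tensor product of Poincar\'e sheaves, so you cannot simply replace the object by its open restriction and then invoke the criterion. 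Moreover, your description of the derived corrections is not right as stated: the higher Tor terms of $\iota^*\iota_*\CP_{F_1}$ live everywhere on $\overline J_{C_b}$, not off $J_{C_b}$, and the complex is \emph{not} in general isomorphic to $\CP_{F_1}\otimes\iota^*\iota_*\CO_{\overline J_{C_b}}$ --- the latter need not even be perfect on the singular $\overline J_{C_b}$, while the former always is.

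The paper supplies the missing ingredient as a separate Claim: each cohomology sheaf $\CH^k(\iota^*\iota_*\CP_{F_1})$ is a direct sum of finitely many copies of $\CP_{F_1}$. This is proved by noting that $\iota^*\iota_*\CO_{\overline J_{C_b}}$ is formal with free cohomology (base-changed from $\{b\}\hookrightarrow B$), and that the two complexes $\iota^*\iota_*\CP_{F_1}$ and $(\iota^*\iota_*\CO_{\overline J_{C_b}})\otimes\CP_{F_1}$, though possibly non-isomorphic, have isomorphic pushforwards to $\overline J_C$ by the projection formula, hence isomorphic cohomology sheaves as $\CO_{\overline J_{C_b}}$-modules. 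The Claim equips $(\iota^*\iota_*\CP_{F_1})^{\otimes N}$ with a filtration (tensor power of the truncation filtration) whose graded pieces are shifts of copies of $\CP_{F_1}^{\otimes N}$ on all of $\overline J_{C_b}$. Non-vanishing of the original cohomology then forces $H^*(\overline J_{C_b},\CP_{F_1}^{\otimes N}\otimes\CP^\vee_{F_3})\neq 0$ for the genuine tensor of Poincar\'e sheaves, to which Arinkin's criterion applies verbatim; the rest of your argument then goes through.
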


\begin{proof}
The proof relies on Arinkin's dimension estimate which goes through the following steps. It is essentially parallel to the proof of Proposition \ref{Arinkin_vanishing}.

\medskip
\noindent {\bf Step 1.} We first show that the set-theoretic support of $\widetilde{\CK}(N)$ is contained in $\overline{J}_C \times_B \overline{J}_C$. By definition and the projection formula, the object $\widetilde{\CK}(N)$ is obtained via the pushforward of an object supported on
\[
\overline{J}_C \times_B \overline{J}_C \times_B  \overline{J}_C \hookrightarrow \overline{J}_C \times \overline{J}_C \times_B \overline{J}_C
\]
Therefore, it can be written as the pushforward through the following chain of maps
\[
\overline{J}_C \times_B \overline{J}_C \times_B  \overline{J}_C \rightarrow  
 \overline{J}_C \times_B \overline{J}_C \hookrightarrow 
\overline{J}_C \times \overline{J}_C
\]
where the first map is the projection to the first and the third factors, and the second map is the natural inclusion $i$. This proves the claim.

\medskip
\noindent {\bf Step 2.} Now any closed point in the support of $\widetilde{K}(N)$ can be represented by a pair of sheaves on $C_b \subset C$:
\begin{equation}\label{pair1}
(F_1, F_2) \in \mathrm{supp}( \widetilde{K}(N) ) \cap \overline{J}_{C_b}.
\end{equation}
As in Step 2 of the proof of Proposition \ref{Arinkin_vanishing}, it suffices to show that
\begin{equation}\label{Step2_prop3.3}
\mathrm{codim}_{\overline{J}_{C_b}}\left(\mathrm{supp}( \widetilde{\CK}(N) )\cap \overline{J}_{C_b}\right) \geq g_a(\widetilde{C}_b)
\end{equation}
where $\widetilde{C}_b$ is the normalization of $C_b$.

\medskip
\noindent {\bf Step 3.} For a pair $(F_1, F_2)$ as in (\ref{pair1}), by base change we obtain a nontrivial cohomology 
\[
H^*\left( \overline{J}_C, (\iota_*\CP_{F_1} )^{\otimes N} \otimes \iota_*\CP^\vee_{F_2} \right) \neq 0.
\]
Here $\iota: \overline{J}_{C_b} \hookrightarrow \overline{J}_C$ is the closed embedding of a closed fiber and the tensor product takes place on the ambient variety $\overline{J}_C$. Equivalently, we may write this nontrivial cohomology on the fiber~$\overline{J}_{C_b}$ as
\begin{equation}\label{coh1}
H^*\left( \overline{J}_{C_b}, ( \iota^*\iota_*\CP_{F_1} )^{\otimes N} \otimes \CP^\vee_{F_2} \right) \neq 0.
\end{equation}
Since $\overline{J}_C$ is nonsingular, the complex $\iota^*\iota_*\CP_{F_1}$ is \emph{perfect} on the possibly singular fiber $\overline{J}_{C_b}$. The following claim describes all the cohomology sheaves of this complex. 
\medskip

\noindent {\bf Claim.} Each cohomology sheaf $\CH^k(\iota^*\iota_*\CP_{F_1}) \in \mathrm{Coh}(\overline{J}_{C_b})$ is a direct sum of finite copies of~$\CP_{F_1}$ on $\overline{J}_{C_b}$.

\begin{proof}[Proof of the claim] We first note the decomposition
\[
\iota^*\iota_* \CO_{\overline{J}_{C_b}} \simeq \bigoplus_k \CH^k(\iota^*\iota_*\CO_{\overline{J}_{C_b}})[-k]
\]
where each term in the right-hand side is a free $\CO_{\overline{J}_{C_b}}$-module. This can be seen from base change and the corresponding statement for the embedding $\{b\} \hookrightarrow B$.

Next, we relate the two objects\footnote{The two objects (\ref{eq47}) may not be isomorphic in $D^b \mathrm{Coh}(\overline{J}_{C_b})$, since the second may not be perfect while the first is always perfect.}
\begin{equation}\label{eq47}
\iota^*\iota_*\CP_{F_1}, \quad  \iota^*\iota_* \CO_{\overline{J}_{C_b}} \otimes \CP_{F_1} \in D^b\mathrm{Coh}(\overline{J}_{C_b}).
\end{equation}
By the projection formula, we see that they are isomorphic after pushing forward to $\overline{J}_C$,
\[
\iota_*(\iota^*\iota_*\CP_{F_1} ) \simeq \iota_* \left( \iota^*\iota_* \CO_{\overline{J}_{C_b}} \otimes \CP_{F_1}\right).
\]
Therefore, the cohomology sheaves of the two objects (\ref{eq47}), which are $\CO_{\overline{J}_{C_b}}$-modules themselves, are isomorphic as $\CO_{\overline{J}_C}$-modules. This forces them to be isomorphic as $\CO_{\overline{J}_{C_b}}$-modules:
\[
\CH^k(\iota^*\iota_*\CP_{F_1} ) \simeq  \CH^k\left(\iota^*\iota_* \CO_{\overline{J}_{C_b}} \otimes \CP_{F_1}\right)  \in \mathrm{Coh}(\overline{J}_{C_b}).
\]
The claim then follows from the decomposition of $\iota^*\iota_* \CO_{\overline{J}_{C_b}}$. 
\end{proof}


\noindent {\bf Step 4.} By the claim of Step 3, we know that the object $\iota^*\iota_*\CP_{F_1}$ admits an increasing filtration induced by the standard truncation functors, whose graded pieces are direct sums of finite copies of $\CP_{F_1}$. Hence the cohomology (\ref{coh1}) admits a filtration whose graded pieces are direct sums of finite copies of
\begin{equation}\label{non-van1}
H^*\left(\overline{J}_{C_b}, \CP_{F_1}^{\otimes N} \otimes \CP^{-1}_{F_2} \right).
\end{equation}
In particular, we obtain from (\ref{coh1}) that there exists a nontrivial cohomology of the type (\ref{non-van1}). 

By the same argument as in \cite[Section 7.1]{A2} or Steps 2 and 3 in the proof of Proposition~\ref{Arinkin_vanishing}, the non-vanishing cohomology (\ref{non-van1}) expresses a codimension $g_a(\widetilde{C}_b)$ condition for the pair $(F_1, F_2) \in \overline{J}^2_{C_b}$: the constraint is given by
\[
(\CP_{F_1}|_{J_{C_b}} )^{\otimes N} \otimes (\CP_{F_2}|_{J_{C_b}} )^\vee \simeq \CO_{J_{C_b}},
\]
which further yields
\[
(F_1\big{|}_{C^{\mathrm{reg}}_b})^{\otimes N} \otimes (F_2\big{|}_{C^{\mathrm{reg}}_b})^\vee \simeq \CO_{C^{\mathrm{reg}}_b}.
\]
This is a codimension $g_a(\widetilde{C}_b)$ condition, and the proof of (\ref{Step2_prop3.3}) is complete.
\end{proof}

For our purpose we also consider variants of $\widetilde{\CK}(N)$, replacing the (derived) tensor product partially by the (derived) exterior product:
\[
\widetilde{\CK}^\lambda(N_1, N_2, \dots, N_k) : = 
 \CP^{-1} \circ \bigotimes_{i=1}^{k}\left(\wedge^{N_i}(i_*\CP) \right)\in D^b\mathrm{Coh}(\overline{J}_C \times \overline{J}_C).
\]
Since $\wedge^{N_i}(i_*\CP)$ is an isotypic component of the natural $\mathfrak{S}_n$-action on $(i_*\CP)^{\otimes N_i}$, the object above is a direct summand of $\widetilde{\CK}(\sum_i N_i)$. This gives the following.

\begin{cor} \label{cor:wedge}
For any $k$-tuple $(N_1, N_2, \dots, N_k)$, the object $\widetilde{\CK}^\lambda(N_1, N_2, \dots, N_k)$ is supported on a codimension $g$ subset of $\overline{J}_C \times_B \overline{J}_C$.
\end{cor}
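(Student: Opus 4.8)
The plan is to obtain the corollary as a formal consequence of the preceding Proposition on $\widetilde{\CK}(N)$, using only that $D^b\mathrm{Coh}(\overline{J}_C \times \overline{J}_C)$ is a $\BQ$-linear, idempotent-complete triangulated category. The point is that $\widetilde{\CK}^\lambda(N_1, \dots, N_k)$ is, up to a direct summand, nothing but $\widetilde{\CK}(N)$ for $N := \sum_{i=1}^{k} N_i$, so its support cannot be larger.

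First I would recall the Schur-functor decomposition of tensor powers: since we work in characteristic $0$, for each $N_i$ the sign idempotent in $\BQ[\mathfrak{S}_{N_i}]$ acts on the $N_i$-th derived tensor power $(i_*\CP)^{\otimes N_i}$ through the natural $\mathfrak{S}_{N_i}$-symmetry (with its Koszul signs), and idempotent-completeness splits off the derived exterior power $\wedge^{N_i}(i_*\CP)$ as a direct summand of $(i_*\CP)^{\otimes N_i}$. Tensoring these splittings over $i = 1, \dots, k$ and using that $\otimes$ commutes with finite direct sums, $\bigotimes_{i=1}^{k} \wedge^{N_i}(i_*\CP)$ is a direct summand of $\bigotimes_{i=1}^{k} (i_*\CP)^{\otimes N_i} \simeq (i_*\CP)^{\otimes N}$. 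Next I would apply the convolution $\CP^{-1} \circ (-)$: by the flatness of $\CP$ over $B$ in both factors this composition is computed by the honest derived recipe (pull both kernels back to the triple product $\overline{J}_C \times \overline{J}_C \times_B \overline{J}_C$, take the derived tensor product, push forward to the two outer factors), hence it is an additive triangulated functor and carries a summand to a summand. Therefore $\widetilde{\CK}^\lambda(N_1, \dots, N_k) = \CP^{-1} \circ \bigotimes_{i} \wedge^{N_i}(i_*\CP)$ is a direct summand of $\CP^{-1} \circ (i_*\CP)^{\otimes N} = \widetilde{\CK}(N)$.

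To conclude, a direct summand of an object of $D^b\mathrm{Coh}$ has support contained in the support of that object, so $\mathrm{supp}(\widetilde{\CK}^\lambda(N_1, \dots, N_k)) \subseteq \mathrm{supp}(\widetilde{\CK}(N))$, and the Proposition says the latter has codimension $\geq g$ inside $\overline{J}_C \times_B \overline{J}_C$, which is exactly the assertion. I expect no serious obstacle here: the only step needing any care is the bookkeeping of the $\mathfrak{S}_{N_i}$-isotypic decomposition inside the derived category, i.e.\ fixing the convention that $\wedge^{N_i}$ denotes the derived Schur functor attached to the symmetric monoidal structure with Koszul signs, so that, for the sheaf $i_*\CP$ placed in degree $0$, $\wedge^{N_i}(i_*\CP)$ genuinely is the sign-isotypic summand of the (possibly complicated) complex $(i_*\CP)^{\otimes N_i}$. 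Once that convention is in place everything is formal, and in particular no geometric input beyond Arinkin's dimension bound used in the Proposition is required.
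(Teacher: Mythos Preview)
Your proof is correct and follows essentially the same approach as the paper: the paper observes that $\wedge^{N_i}(i_*\CP)$ is an isotypic component of the $\mathfrak{S}_{N_i}$-action on $(i_*\CP)^{\otimes N_i}$, so $\widetilde{\CK}^\lambda(N_1,\dots,N_k)$ is a direct summand of $\widetilde{\CK}(\sum_i N_i)$, and the Proposition then gives the codimension bound. Your write-up spells out the same summand argument with more care about the derived Schur functor conventions and the additivity of $\CP^{-1}\circ(-)$, but the underlying idea is identical.
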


\subsubsection{Adams operations}
With the Arinkin-type dimension bound in hand, we proceed to $K$-theoretic operations following Gillet--Soul\'e \cite{GS}.

Let $i: X \hookrightarrow Y$ be a closed embedding of finite type schemes. We denote by $K_{X}(Y)$ the Grothendieck group of bounded complexes of locally free sheaves on $Y$ which are exact off $X$. In \cite[Section 4]{GS}, a sequence of \emph{Adams operations}
\[
\psi^N: K_{X}(Y) \to K_{X}(Y), \quad N \geq 1
\]
were constructed using the $\lambda$-ring structure on $K_{X}(Y)$ given by the (derived) exterior product, together with the induction formula
\begin{equation} \label{eq:adamsdef}
\psi^N - \psi^{N - 1} \otimes \lambda^1 + \cdots +(-1)^{N - 1}\psi^1 \otimes \lambda^{N - 1} + (-1)^NN\lambda^N = 0.
\end{equation}

The compatibility of the Adams operations with the localized Chern character $\ch_X^Y(-)$ in \cite[Section~18.1]{Ful} is described in \emph{e.g.}~\cite[Theorem 3.1]{KR}. Namely for $F_\bullet \in K_X(Y)$, we~have
\begin{equation} \label{eq:scale}
\ch_{X, k}^Y(\psi^N(F_\bullet)) = N^k \ch_{X, k}^Y(F_\bullet) \in \Chow^k(X \to Y).
\end{equation}
Here $\Chow^k(X \to Y)$ is the degree $k$ bivariant Chow group of $i: X \hookrightarrow Y$. From \eqref{eq:scale} we see how localized Chern characters in different degrees are scaled differently by the Adams operations.

We now return to the closed embedding \eqref{eq:regemb} and fix it for the rest of this section.\footnote{Alternatively, one could perhaps use the Adams operations $\psi_N$ in \cite{AGP, HM} which are canonical and independent of the closed embedding.} Since $\overline{J}_C \times \overline{J}_C$ is nonsingular, capping with $\CO_{\overline{J}_C \times \overline{J}_C} \in K_*(\overline{J}_C \times \overline{J}_C)$ induces a canonical isomorphism
\begin{equation} \label{eq:kisom}
\cap\CO_{\overline{J}_C \times \overline{J}_C}: K_{\overline{J}_C \times_B \overline{J}_C}(\overline{J}_C \times \overline{J}_C) \xrightarrow{~~\simeq~~} K_*(\overline{J}_C \times_B \overline{J}_C)
\end{equation}
whose inverse is the map that sends $F \in K_*(\overline{J}_C \times_B \overline{J}_C)$ to any locally free resolution $F_\bullet$ of~$i_*F$ on $\overline{J}_C \times \overline{J}_C$; see \cite[Lemma 1.9]{GS}. Similarly, by \cite[Propositions 17.3.1 and 17.4.2]{Ful} there are canonical isomorphisms
\begin{equation} \label{eq:chowisom}
\cap[\overline{J}_C \times \overline{J}_C]: \Chow^k(\overline{J}_C \times_B \overline{J}_C \to \overline{J}_C \times \overline{J}_C) \xrightarrow{~~\simeq~~} \Chow_{2\dim\overline{J}_C - k}(\overline{J}_C \times_B \overline{J}_C).
\end{equation}

Under \eqref{eq:kisom} and \eqref{eq:chowisom} the Adams operations (with respect to the closed embedding \eqref{eq:regemb}) take the form
\[
\psi^N: K_*(\overline{J}_C \times_B \overline{J}_C) \to K_*(\overline{J}_C \times_B \overline{J}_C), \quad N \geq 1
\]
and the localized Chern character $\ch_{\overline{J}_C \times_B \overline{J}_C}^{\overline{J}_C \times \overline{J}_C}(-)$ yields a morphism (with simplified notation)
\[
\widetilde{\ch}: K_*(\overline{J}_C \times_B \overline{J}_C) \to \Chow_*(\overline{J}_C \times_B \overline{J}_C),
\]
such that for $F \in K_*(\overline{J}_C \times_B \overline{J}_C)$ we have
\begin{equation} \label{eq:compch}
\widetilde{\ch}_k(\psi^N(F)) = N^k\widetilde{\ch}_k(F) \in \Chow_{2\dim\overline{J}_C - k}(\overline{J}_C \times_B \overline{J}_C).
\end{equation}

Moreover, Corollary \ref{cor:wedge} together with the expression \eqref{eq:adamsdef} immediately implies the following.

\begin{cor} \label{cor3.5}
For any $N$, the class $\CP^{-1} \circ \psi^N(\CP) \in K_*(\overline{J}_C \times_B \overline{J}_C)$ is supported on a codimension $g$ subset of $\overline{J}_C \times_B \overline{J}_C$.
\end{cor}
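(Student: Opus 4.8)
The plan is to deduce Corollary~\ref{cor3.5} from Corollary~\ref{cor:wedge} by unwinding the definition of the Adams operation. By construction $\psi^N$ is assembled from the $\lambda$-operations through the Newton-type recursion \eqref{eq:adamsdef}; solving this recursion expresses $\psi^N$ as a universal polynomial with integer coefficients in $\lambda^1,\dots,\lambda^N$, every monomial of which involves only $\lambda^j$ with $1\leq j\leq N$. Applying this identity to the class of $\CP$ in the $\lambda$-ring $K_{\overline{J}_C\times_B\overline{J}_C}(\overline{J}_C\times\overline{J}_C)$ (a commutative ring, since $\overline{J}_C\times\overline{J}_C$ is nonsingular, with multiplication the derived tensor product of complexes of locally free sheaves), I would write
\[
\psi^N(\CP)\;=\;\sum_{\underline{N}=(N_1,\dots,N_k)} c_{\underline{N}}\;\lambda^{N_1}(\CP)\cdots\lambda^{N_k}(\CP),
\]
a finite sum over tuples with all $N_i\geq 1$ and $\sum_i N_i = N$, with $c_{\underline N}\in\BZ$. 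Here, under the isomorphism \eqref{eq:kisom}, each factor $\lambda^{N_i}(\CP)$ is the class of the derived exterior power $\wedge^{N_i}(i_*\CP)$, and the product of such factors is the class of their derived tensor product over $\overline{J}_C\times\overline{J}_C$, which remains exact off $\overline{J}_C\times_B\overline{J}_C$ since each factor is.

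Next I would apply the $\BZ$-linear $K$-theoretic composition $\CP^{-1}\circ(-)\colon K_*(\overline{J}_C\times_B\overline{J}_C)\to K_*(\overline{J}_C\times_B\overline{J}_C)$ of \eqref{eq:compFMK} to this identity. Term by term this turns $\CP^{-1}\circ\bigl(\lambda^{N_1}(\CP)\cdots\lambda^{N_k}(\CP)\bigr)$ into $\widetilde{\CK}^\lambda(N_1,\dots,N_k)$, so that
\[
\CP^{-1}\circ\psi^N(\CP)\;=\;\sum_{\underline{N}} c_{\underline{N}}\,\widetilde{\CK}^\lambda(N_1,\dots,N_k)\;\in\;K_*(\overline{J}_C\times_B\overline{J}_C).
\]
By Corollary~\ref{cor:wedge} each summand $\widetilde{\CK}^\lambda(N_1,\dots,N_k)$ lies in the image of $K_*(Z_{\underline N})\to K_*(\overline{J}_C\times_B\overline{J}_C)$ for some closed $Z_{\underline N}\subset\overline{J}_C\times_B\overline{J}_C$ of codimension $\geq g$. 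Setting $Z:=\bigcup_{\underline N}Z_{\underline N}$, a finite union and hence itself of codimension $\geq g$, the whole combination $\CP^{-1}\circ\psi^N(\CP)$ lies in the image of $K_*(Z)\to K_*(\overline{J}_C\times_B\overline{J}_C)$, which is precisely the assertion of Corollary~\ref{cor3.5}.

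I do not expect a genuine obstacle at this stage: all the geometric content sits upstream, in Arinkin's dimension bound (Proposition~\ref{Arinkin_vanishing}) and the variant feeding Corollary~\ref{cor:wedge}. The only points that require a little care are the bookkeeping identifying the ring multiplication of $K_{\overline{J}_C\times_B\overline{J}_C}(\overline{J}_C\times\overline{J}_C)$ with the derived tensor products appearing in the definition of $\widetilde{\CK}^\lambda$, and the elementary remark that being ``supported in codimension $\geq g$'' is stable under finite $\BZ$-linear combinations, since the codimension of a finite union of closed subsets equals the minimum of their codimensions.
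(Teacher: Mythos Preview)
Your proposal is correct and follows exactly the route the paper intends: the paper's one-line justification ``Corollary~\ref{cor:wedge} together with the expression~\eqref{eq:adamsdef} immediately implies the following'' is precisely the argument you have written out, namely solving the Newton recursion to express $\psi^N(\CP)$ as an integer combination of products $\lambda^{N_1}(\CP)\cdots\lambda^{N_k}(\CP)$ with $\sum N_i=N$, composing with $\CP^{-1}$, and invoking Corollary~\ref{cor:wedge} term by term. Your bookkeeping remarks on the identification via~\eqref{eq:kisom} and on stability of the codimension bound under finite $\BZ$-linear combinations are accurate and complete the argument.
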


\subsubsection{Proof of (FV) for $\pi: \overline{J}_C \to B$} \label{Sec3.5.3}
We apply $\tau(-)$ to the $K$-theory class
\[
\CP^{-1} \circ \psi^N(\CP) \in K_*(\overline{J}_C \times_B \overline{J}_C).
\]
Following the calculations in \eqref{eq:tauP}, we find
\begin{equation} \label{eq:taucomp}
\tau(\CP^{-1} \circ \psi^N(\CP)) = p_{13*}\left(\td(-p_2^*T_{\overline{J}_C})\cap \delta^!(\tau(\psi^N(\CP)) \times \tau(\CP^{-1}))\right).
\end{equation}
Here we have taken the notation from Sections \ref{Sec1.3.2} and \ref{Sec2.2.1}. Then we cap both sides of \eqref{eq:taucomp} with $\td(-q_1^*T_{\overline{J}_C} - \pi_2^*T_B)$, where $q_i: \overline{J}_C \times_B \overline{J}_C \to \overline{J}_C$ is the $i$-th projection and $\pi_2: \overline{J}_C \times_B \overline{J}_C \to B$ is the natural map. We have by \eqref{eq:locchern}
\begin{equation} \label{eq:f-1ch}
\begin{aligned}
& \td(-q_1^*T_{\overline{J}_C} - \pi_2^*T_B) \cap \tau(\CP^{-1} \circ \psi^N(\CP)) \\
={} & \td(-q_1^*T_{\overline{J}_C} - \pi_2^*T_B) \cap p_{13*}\left(\td(-p_2^*T_{\overline{J}_C})\cap \delta^!(\tau(\psi^N(\CP)) \times \tau(\CP^{-1}))\right)\\
={} & p_{13*}\delta^!\left( \left(\td(-q_1^*T_{\overline{J}_C} -q_2^*T_{\overline{J}_C}) \cap \tau(\psi^N(\CP))\right) \times (\td(- \pi_2^*T_B) \cap \tau(\CP^{-1})) \right) \\
={} & \FF^{-1} \circ \widetilde{\ch}(\psi^N(\CP)).
\end{aligned}
\end{equation}

The dimension bound of Corollary \ref{cor3.5} now implies that both ends of \eqref{eq:f-1ch} vanish in codimensions $< g$. In other words, we have for $k < 2g$,
\[
\sum_{i + j = k}\FF^{-1}_i \circ \widetilde{\ch}_{j + \dim B}(\psi^N(\CP)) = \sum_{i + j = k}N^{j + \dim B}\FF^{-1}_i \circ \widetilde{\ch}_{j + \dim B}(\CP) = 0 \in \Corr_B^{k - 2g}(\overline{J}_C, \overline{J}_C).
\]
Note that the first equality uses the scaling \eqref{eq:compch}. Since the second equality holds for every $N$, we obtain the vanishing of each individual term
\begin{equation} \label{eq:FVch}
\FF^{-1}_i \circ \widetilde{\ch}_{j + \dim B}(\CP) = 0, \quad i + j < 2g.
\end{equation}

Finally, we notice that by \eqref{eq:FFaut} we have
\[
\FF = \td(\pi_2^*T_B) \cap \widetilde{\ch}(\CP).
\]
Since the Todd class $\td(\pi_2^*T_B)$ is pulled back from the base $B$, we conclude from \eqref{eq:FVch} that
\begin{align*}
\FF^{-1}_i \circ \FF_j & = \FF^{-1}_i \circ \left(\sum_{j' + j'' = j} \td_{j'}(\pi_2^*T_B) \cap \widetilde{\ch}_{j'' + \dim B}(\CP)\right) \\
& = \sum_{j' + j'' = j} \td_{j'}(\pi_2^*T_B) \cap (\FF^{-1}_i \circ  \widetilde{\ch}_{j'' + \dim B}(\CP)) = 0
\end{align*}
for $i + j < 2g$. This completes the proof of (FV) for $\pi: \overline{J}_C \to B$. \qed

\subsection{Variants}\label{Sec3.6}

Recall from \cite{A1, A2} that Arinkin's normalized Poincar\'e sheaf $\CP$ on $\overline{J}_C \times_B \overline{J}_C$ is constructed from the normalized universal family $\CF$ on $C \times_B \overline{J}_C$ of degree $0$ rank $1$ torsion-free sheaves. Here the normalization of $\CF$ is characterized by the trivializations along the sections 
\[
B \subset C, \quad  0_{J} \subset  \overline{J}_C. 
\]
In this section, we write
\[
\CP  = \CP(\CF, \CF)
\]
to indicate the dependence of the normalized Poincar\'e sheaf on the universal sheaf $\CF$, where the two factors correspond to the two factors in Arinkin's formula \cite[(1.1)]{A2} respectively; see also the formula after (3.2) in \cite{ADM}.

We have shown that this normalized Poincar\'e sheaf $\CP$ yields a motivic lifting of the perverse filtration associated with $\pi: \overline{J}_C \to B$. Now we discuss variants of the construction above, which will be used in Section \ref{Sec:twist}.

Using the section of $C \to B$, we can identify the degree $d$ compactified Jacobian $\overline{J}^d_C$ with the (degree $0$) compactified Jacobian $\overline{J}_C$. A universal family of rank $1$ degree $d$ torsion-free sheaves on $C \times_B \overline{J}^d_C = C \times_B \overline{J}_C$ is given by
\[
\CF \otimes p_C^*O_C(dB) \otimes p_J^*\CL,
\]
where $p_C, p_J$ are the natural projections from $C \times_B \overline{J}_C$, and $\CL$ is a line bundle on $\overline{J}_C$. Now for two integers~$d,e$, we consider the twisted Poincar\'e sheaf
\begin{equation*}
\CP':=\CP(\CF \otimes p_C^*O_C(eB) \otimes p_J^*\CL_e, \,\CF \otimes p_C^*O_C(dB) \otimes p_J^*\CL_d) \in \mathrm{Coh}( \overline{J}_C \times_B \overline{J}_C ),
\end{equation*}
where we impose the condition that $\CL_d, \CL_e$ are line bundles on $\overline{J}_C$ whose restrictions to a nonsingular fiber ${J}_{C_b}$ lie in $\mathrm{Pic}^0({J}_{C_b})$. By the proof of \cite[Proposition 3.1(i)]{ADM}, we have
\[
\CP' \simeq  \CP \otimes \left( \CL_e^{\otimes d} \boxtimes \CL_d^{\otimes e} \right), \quad \CP'^{-1} \simeq  \CP^{-1} \otimes \left( \CL_d^{\vee \otimes e} \boxtimes \CL_e^{\vee \otimes d} \right).
\]
Parallel to the construction of the Fourier theory using $\CP$, we can also use $\CP'$ to define the Fourier transforms $\mathfrak{F}', \mathfrak{F}'^{-1}$, the projectors $\mathfrak{p}'_k, \mathfrak{q}'_{k + 1}$, and the motives $P'_kh(\overline{J}_C)$ with orthogonal complements $Q'_{k+1}h(\overline{J}_C)$. These operators and submotives \emph{a priori} depend on the choices of~$d, e, \CL_d, \CL_e$.

The following proposition shows that, changing the degree, or twisting the normalized Poincar\'e sheaf by fiberwise homologically trivial line bundles does not influence the realization.

\begin{prop}\label{prop3.6}
For $\CP'$ as above, the homological realization of $P'_kh(\overline{J}_C)$ together with the inclusion $P'_kh(\overline{J}_C) \to h(\overline{J}_C)$ is also given by the natural morphism
\[
{^\mathfrak{p}}\tau_{\leq k+\dim B} \pi_* \BQ_{\overline{J}_C} \to \pi_*\BQ_{\overline{J}_C}.
\]
\end{prop}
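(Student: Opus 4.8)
The plan is to deduce the proposition from Theorem~\ref{thm:main}(ii) applied to the twisted datum $\CP'$, the only new input being the fibrewise triviality of $\CL_d$ and $\CL_e$.

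\smallskip

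\emph{Step 1: comparing $\mathfrak F'$ with $\mathfrak F$.} Since $\CP'\simeq \CP\otimes(\CL_e^{\otimes d}\boxtimes\CL_d^{\otimes e})$ and the Riemann--Roch transformation satisfies $\tau(L\otimes E)=\ch(L)\cap \tau(E)$ for a line bundle $L$, the definitions \eqref{eq:defFF} and \eqref{eq:defFF-1} give
\[
\mathfrak F'=m_v\circ \mathfrak F\circ m_u,\qquad \mathfrak F'^{-1}=m_{u^{-1}}\circ \mathfrak F^{-1}\circ m_{v^{-1}},
\]
where $u:=\exp(d\,c_1(\CL_e))$ and $v:=\exp(e\,c_1(\CL_d))$ are invertible elements of $H^{\mathrm{ev}}(\overline J_C,\BQ)$ and $m_{(-)}$ denotes the operator of cup-product multiplication on the appropriate factor. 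In particular $\mathfrak F'^{-1}\circ \mathfrak F'=m_{u^{-1}}\circ(\mathfrak F^{-1}\circ \mathfrak F)\circ m_u$ and likewise $\mathfrak F'\circ \mathfrak F'^{-1}=m_v\circ(\mathfrak F\circ \mathfrak F^{-1})\circ m_{v^{-1}}$; the same identities hold at the level of Chow-theoretic correspondences.

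\smallskip

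\emph{Step 2: $\CP'$ is again a dualizable datum satisfying (FV).} Property (a1) of Section~\ref{Sec1.4} is exactly the explicit formula for $\CP'^{-1}$, and the support condition (b) does not involve the Poincar\'e sheaf. For the Fourier vanishing I would rerun Section~\ref{Sec3.5} with $\CP$ replaced by $\CP'=\CP\otimes L$, where $L=p_1^*\CL_e^{\otimes d}\otimes p_2^*\CL_d^{\otimes e}$ is the restriction of a line bundle on $\overline J_C\times\overline J_C$. Tracing through the proof of the dimension bound (Proposition~\ref{Arinkin_vanishing} and its variant before Corollary~\ref{cor3.5}): a closed point in the support of the relevant object $\widetilde\CK^\lambda$ built from $\CP'$ again produces a pair $(F_1,F_2)$ over some $b$ with non-vanishing fibrewise cohomology, and since the $\CL_d$-twist contributes only a scalar at the fixed points while $\iota^*\iota_*\CP'_{F_1}$ still has cohomology sheaves isotypic of type $\CP'_{F_1}$, the resulting constraint becomes
\[
(\CP_{F_1}|_{J_{C_b}})^{\otimes N}\otimes (\CP_{F_2}|_{J_{C_b}})^\vee\simeq (\CL_e|_{J_{C_b}})^{\otimes d(1-N)},
\]
i.e.\ a translate of $\CO_{J_{C_b}}$ by a fixed element of $\mathrm{Pic}^0(J_{C_b})$. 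As translation by such an element is an automorphism of $\overline J_{C_b}$, this still cuts out a locus of codimension $g_a(\widetilde{C}_b)$, so the bound of Corollary~\ref{cor3.5} holds for $\CP'$, and the proof of (FV) in Section~\ref{Sec3.5.3} applies verbatim. Hence Theorem~\ref{thm:main}(i) shows that $\mathfrak p'_k,\mathfrak q'_{k+1}$ are orthogonal projectors and $P'_kh(\overline J_C),Q'_{k+1}h(\overline J_C)$ are well-defined motives. (Note that (a2) is not needed here: the realization statement uses only (a1), (b), and (FV).)

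\smallskip

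\emph{Step 3: the realization, and the main obstacle.} I would now repeat the proof of Theorem~\ref{thm:main}(ii) in Section~\ref{Sec2.5.2} for the primed datum. Everything is formal except the input over the open $U\subset B$ on which $\pi_U\colon M_U\to U$ is an abelian scheme, where one must verify that ${}^{\mathfrak p}\CH^{i+\dim B}(\mathfrak p'_{k*}\pi_{U*}\BQ_{M_U})$ vanishes for $i>k$ and that ${}^{\mathfrak p}\CH^{i+\dim B}$ of the inclusion $\mathfrak p'_{k*}\pi_{U*}\BQ_{M_U}\to\pi_{U*}\BQ_{M_U}$ is an isomorphism for $i\le k$. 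This is exactly where the $\mathrm{Pic}^0$-hypothesis enters: over $U$ the classes $c_1(\CL_d),c_1(\CL_e)$ restrict to $0$ on every fibre, hence lie in $P_1H^2$, so $u|_U,v|_U\in 1+P_1H^{\ge 2}$ and $m_u|_U,m_v|_U$ induce the identity on $\mathrm{Gr}^P_\bullet$. Consequently $\mathfrak p'_k|_U$ and the Deninger--Murre projector $\mathfrak p_k|_U$ of \cite{DM,Kun} act in the same way on the perverse cohomology sheaves $R^i\pi_{U*}\BQ_{M_U}$ --- as the identity for $i\le k$ and as $0$ for $i>k$ --- which yields the two required properties over $U$; the full support condition then propagates them to all of $B$ exactly as in Section~\ref{Sec2.5.2}, giving $\mathfrak p'_{k*}\pi_*\BQ_{\overline J_C}\simeq{}^{\mathfrak p}\tau_{\le k+\dim B}\pi_*\BQ_{\overline J_C}$ with the natural inclusion. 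The hard point is precisely this comparison over $U$: one must be careful that replacing the normalized Poincar\'e bundle by $\CP'$ changes the \emph{splitting} of the perverse filtration over $U$ but neither the filtration itself nor the induced maps on its graded pieces --- equivalently, that $m_u,m_v$ are ``filtered unipotent'' over $U$. Granting this, the rest is a cosmetic repetition of the arguments already given for $\CP$.
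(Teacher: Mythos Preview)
Your proposal is correct and follows the same route as the paper. Two remarks on presentation. First, the paper treats your Step~2 as implicit: it simply asserts that ``parallel to the construction of the Fourier theory using $\CP$, we can also use $\CP'$ to define \ldots the projectors $\mathfrak p'_k, \mathfrak q'_{k+1}$'', leaving the reader to observe that the Arinkin dimension bound is stable under twisting by $L$ (exactly your translate-by-a-fixed-$\mathrm{Pic}^0$-element argument). Second, your Step~3 is equivalent to, but more elaborate than, the paper's check: rather than invoking ``filtered unipotence'' of $m_u,m_v$, the paper restricts directly to stalks over a closed point $b\in U$, where $\CL_d|_{J_{C_b}},\CL_e|_{J_{C_b}}\in\mathrm{Pic}^0(J_{C_b})$ have $c_1=0$ in rational cohomology, so $u|_b=v|_b=1$ and hence $\mathfrak p'_k|_b=\mathfrak p_k|_b$ as classes in $H^*(J_{C_b}\times J_{C_b},\BQ)$. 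Since the $R^i\pi_{U*}\BQ$ are local systems, this stalkwise equality immediately gives the identification on perverse cohomology sheaves that you extract from the filtered-unipotent reformulation. Both arguments are the same observation; the paper's version is just shorter.
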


\begin{proof}
By the support argument of Section \ref{Sec2.5.2}, it suffices to work with the Jacobian fibration $\pi_U: J_U \to U$ over an (arbitrary) open subset $U \subset B$ parameterizing nonsingular curves, and show that the composition
\begin{equation}\label{eqn58}
\Fp'_k|_{U*}(\pi_{U*}\BQ_{J_U}) \rightarrow \pi_{U*}\BQ_{J_U} \rightarrow \Fp_k|_{U*}(\pi_{U*}\BQ_{J_U})
\end{equation}
is an isomorphism.

Note that both sides of (\ref{eqn58}) are direct sums of (shifted) local systems. In order to show that~(\ref{eqn58}) is an isomorphism, it suffices to check that it induces an isomorphism on the stalks over each $b \in U$. Then the desired statement follows clearly: since $\CL_d, \CL_e$ lie in $\mathrm{Pic}^0(J_{C_b})$ for~$b\in U$, the restrictions of $\Fp'_k$ and $\Fp_k$ over $b\in U$ are cohomologically identical.
\end{proof}

\section{Twisted compactified Jacobians} \label{Sec:twist}

\subsection{Overview and setup}
The purpose of the this section is to extend the Fourier theory for compactified Jacobians established in Section \ref{comp_jac} to a family of integral locally planar curves without a section. This situation occurs in the study of Hitchin systems and the Le Potier moduli spaces of $1$-dimensional sheaves on $\BP^2$. The main results are twisted versions of Theorem \ref{thm:main} and Corollary \ref{motivic_decomp}; see Corollaries \ref{cor:main} and \ref{cor:motdectwist}. We will discuss applications in Section~\ref{applications}.

Throughout this section, we let $C \to B$ be a flat family of integral projective curves of arithmetic genus $g$ with planar singularities over an irreducible base $B$. We assume that the total space $C$ is nonsingular (hence $B$ is nonsingular), and there is a multisection 
\[
D \subset C \to B
\]
of degree $r$ which is finite and flat over $B$.
For convenience, we also assume that for any integer~$d$, the degree $d$ compactified Jacobian~$\overline{J}^d_C$ is a quasi-projective variety. This is satisfied when $C \to B$ is given by a linear system of a nonsingular surface; in this case, the compactified Jacobian arises as the moduli space of certain semistable torsion sheaves on the surface, so quasi-projectivity follows from the general construction of moduli spaces of semistable sheaves. We further assume that $\overline{J}^d_C$ is nonsingular.

We denote the natural projection map by 
\[
\pi_d: \overline{J}^d_C \to B
\]
to indicate its dependence on $d$.

\subsection{Universal families and gerbes}\label{Sec4.2}

For arbitrary degree $d$, two issues arise if we want to establish a Fourier theory for $\overline{J}^d_C$ governing the perverse filtration.
\begin{enumerate}
    \item[(i)] There may not exist a universal sheaf on $C \times_B \overline{J}^d_C$.
    \item[(ii)] Even if a universal sheaf exists, it is not unique. Different choices of a universal sheaf influence the Poincar\'e sheaf and the Fourier transform. In particular, for an arbitrary choice of a universal sheaf, the induced motivic filtration may not recover the perverse filtration via homological realization.\footnote{This is already the case for the Jacobian fibration associated with a family of nonsingular curves.}
\end{enumerate}
When $C\to B$ admits a section, a standard solution to (ii) is that we can force the universal family to be trivialized along the section, which eliminates the ambiguity given by a line bundle pulled back from the moduli space $\overline{J}^d_C$. Since now we only have a multisection, we first introduce the notion of \emph{trivialization along a multisection}.

For any $B$-scheme $T$, we consider a flat family of rank $1$ torsion-free sheaves of degree $d$ on the curves parameterized by $T$:
\[
\CF^d_T \rightsquigarrow C\times_B T,
\]
and we define
\begin{equation}\label{line_bundle}
\CR^d_T: = \mathrm{det}\left(p_{T*} (\CF^d_T|_{D\times_B T} )\right) \in \mathrm{Pic}(T).
\end{equation}
Note that $\CF^d_T|_{D\times_B T}$ is Tor-finite over $T$ so that the pushforward to $T$ is a perfect complex and we can take its determinant; see \cite[Tag 08IS]{Stacks}. 
We say that this family over $T$ is trivialized along the multisection $D \subset C$, if there is a specified isomorphism
\[
\CR^d_T \simeq \CO_T \in \mathrm{Pic}(T).
\]
However, we note that unlike the case with a section, in general we can not find a universal sheaf on $C \times_B \overline{J}^d_C$ which is trivialized along the multisection $D$, even if a universal sheaf $\CF^d$ exists. This is because globally on $\overline{J}^d_C$ there may not exist a line bundle which is an $r$-th root of the line bundle $\CR^d$.
Nevertheless, we can always solve both issues (i, ii) above by passing to a $\mu_r$-gerbe over $\overline{J}^d_C$ as follows. 

The idea is to modify the moduli functor defining the compactified Jacobian. Let $\overline{\CJ}^d_C$ be the functor sending any $B$-scheme $T$ to a groupoid given by the data
\[
\CF^d_T \rightsquigarrow C\times_B T
\]
satisfying the same conditions as for the stack of the degree $d$ compactified Jacobian, with an extra assumption that $\CF^d_T$ is trivialized along the multisection $D$. 

\begin{prop}
The functor $\overline{\CJ}^d_C$ is represented by a Deligne--Mumford stack which is a $\mu_r$-gerbe over $\overline{J}^d_C$.
\end{prop}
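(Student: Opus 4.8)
The plan is to show that $\overline{\CJ}^d_C$ is a $\mu_r$-gerbe over $\overline{J}^d_C$ by exhibiting it, \'etale locally on $\overline{J}^d_C$, as a quotient stack $[\mathrm{pt}/\mu_r]$ and then gluing. First I would recall that the usual moduli stack of rank $1$ torsion-free sheaves of degree $d$ on the fibers of $C\to B$, call it $\underline{\overline{\CJ}}{}^d_C$, is a $\BG_m$-gerbe over the coarse space $\overline{J}^d_C$ (this is standard: the automorphisms of a stable rank $1$ torsion-free sheaf are exactly the scalars $\BG_m$, and the coarse moduli space exists by our quasi-projectivity assumption; the rigidification $\underline{\overline{\CJ}}{}^d_C\!\fatslash\BG_m\simeq \overline{J}^d_C$). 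The forgetful map $\overline{\CJ}^d_C\to\underline{\overline{\CJ}}{}^d_C$ remembers, in addition to the sheaf $\CF^d_T$, the extra datum of a trivialization $\CR^d_T\xrightarrow{\sim}\CO_T$ of the line bundle \eqref{line_bundle}; it is therefore a torsor-type construction and I will analyze its fibers.

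The key computation is the effect of rescaling a family. If $\CL\in\Pic(T)$ and we replace $\CF^d_T$ by $\CF^d_T\otimes p_T^*\CL$, then since $D\to B$ has degree $r$ and $D\times_B T\to T$ is finite flat of degree $r$, the determinant of the pushforward transforms by
\[
\CR^d_T\big(\CF^d_T\otimes p_T^*\CL\big)\;\simeq\;\CR^d_T(\CF^d_T)\otimes \CL^{\otimes r}.
\]
Thus two universal families on a chart $T$ differ by a line bundle $\CL$, and an isomorphism of the corresponding trivialized families (with the trivializations matching) forces $\CL^{\otimes r}\simeq\CO_T$ compatibly with a chosen trivialization of $\CL^{\otimes r}$ — i.e.\ the automorphisms of an object of $\overline{\CJ}^d_C(T)$ over a fixed object of $\underline{\overline{\CJ}}{}^d_C(T)$ form the group $\mu_r(T)$ rather than $\BG_m(T)$. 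This shows that the inertia of $\overline{\CJ}^d_C$ relative to $\overline{J}^d_C$ is $\mu_r$, and that $\overline{\CJ}^d_C\to\overline{J}^d_C$ is a morphism all of whose geometric fibers are $B\mu_r$. To conclude it is a $\mu_r$-gerbe, I would: (a) check representability and the Deligne--Mumford property — finiteness of automorphisms is immediate from the above, and boundedness/algebraicity follow from that of $\underline{\overline{\CJ}}{}^d_C$ since the map to it is affine (it is a $\mu_r$-torsor over the open locus where a universal family exists, and in general is cut out by the trivialization condition); (b) verify that \'etale locally on $\overline{J}^d_C$ a section exists: on a small enough \'etale chart $U\to\overline{J}^d_C$ a universal family $\CF^d_U$ exists (Zariski-locally this is standard for moduli of sheaves, or one uses that $\underline{\overline{\CJ}}{}^d_C$ is a $\BG_m$-gerbe which splits \'etale-locally), and after possibly shrinking $U$ we can extract an $r$-th root of $\CR^d_U$ since $\Pic$ is $r$-divisible \'etale-locally, which gives a trivialized family, hence a section $U\to\overline{\CJ}^d_C$; such a section identifies $\overline{\CJ}^d_C\times_{\overline{J}^d_C}U\simeq U\times B\mu_r$.

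The main obstacle I anticipate is part (b) — producing, \'etale locally, a universal family together with an $r$-th root trivialization — and making the gluing of these local $\mu_r$-gerbe structures into a clean statement. The existence of a universal family \'etale-locally is essentially the statement that the $\BG_m$-gerbe $\underline{\overline{\CJ}}{}^d_C\to\overline{J}^d_C$ splits \'etale-locally, which is automatic since $\BG_m$-gerbes are classified by $H^2_{\mathrm{\acute et}}(\overline{J}^d_C,\BG_m)$ and vanish after an \'etale cover; one then needs the mild further point that the resulting line-bundle ambiguity can be killed by a further \'etale localization extracting $r$-th roots, which is where the degree-$r$ multisection $D$ is used. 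Everything else (the inertia computation, Deligne--Mumford-ness, the fact that a gerbe banded by $\mu_r$ is what one obtains) is formal once the rescaling formula above is in hand.
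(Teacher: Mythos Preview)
Your proposal is correct and shares the essential insight with the paper: the determinant line bundle $\CR^d$ transforms with weight $r$ under the $\BG_m$-automorphisms of the universal sheaf (your rescaling formula $\CR^d_T(\CF^d_T\otimes p_T^*\CL)\simeq\CR^d_T(\CF^d_T)\otimes\CL^{\otimes r}$, the paper's observation that the perfect complex has rank $r$). The difference is one of packaging. Rather than computing the inertia by hand and then producing \'etale-local sections and gluing, the paper observes that the determinant construction defines a morphism $\overline{\mathfrak{J}}^d_C\to B\BG_m$ from the moduli \emph{stack} (the $\BG_m$-gerbe you call $\underline{\overline{\CJ}}{}^d_C$), and that imposing the trivialization is exactly the fiber product
\[
\overline{\CJ}^d_C \;=\; \overline{\mathfrak{J}}^d_C \times_{B\BG_m} \mathrm{pt}.
\]
This single identification immediately gives representability, the Deligne--Mumford property, and the $\mu_r$-gerbe structure over $\overline{J}^d_C$ (since the map to $B\BG_m$ has weight $r$ on inertia), bypassing entirely the \'etale-local section construction you flagged as the main obstacle. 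Your route works and makes the mechanism transparent; the paper's buys brevity and avoids the local analysis.
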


\begin{proof}
Let $\overline{\mathfrak{J}}^d_C$ denote the stack of degree $d$ compactified Jacobian; it is a $\BG_m$-gerbe over $\overline{J}^d_C$. The line bundles (\ref{line_bundle}) define a morphism
\[
\overline{\mathfrak{J}}^d_C \to B\BG_m.
\]
Imposing the trivialization condition along $D$ is equivalent to taking the fiber product of this morphism with the structure map $\mathrm{pt} \to B\BG_m$:
\[
\overline{\CJ}^d_C= \overline{\mathfrak{J}}^d_C \times_ {B\BG_m} \mathrm{pt}.
\]
This is representable by a Deligne--Mumford stack. Finally it is a $\mu_r$-gerbe over $\overline{J}^d_C$ since the perfect complex $p_{T*} (\CF^d_T|_{D\times_B T} )$ has rank $r$ (which can be seen directly from nonsingular fibers).
\end{proof}



Consequently, we have constructed for any $d$ a nonsingular Deligne--Mumford stack~$\overline{\CJ}^d_C$ realized as a $\mu_r$-gerbe over $\overline{J}^d_C$, together with the universal family $\CF^d$ of rank $1$ degree~$d$ torsion-free sheaves on $C \times_B \overline{\CJ}^d_C$, trivialized along the multisection $D$:
\[
\mathrm{det}\left(p_{J*} (\CF^d|_{D\times_B \overline{\CJ}_C^d} )\right) \simeq \CO_{\overline{\CJ}^d_C} \in \mathrm{Pic}( \overline{\CJ}^d_C ).
\]

\begin{rmk}
Here we work with $\mu_r$-gerbes, instead of $\BG_m$-gerbes, for the following two reasons. First, Deligne--Mumford stacks share the same (rational) Chow groups and cohomology with the underlying course moduli spaces. Second, the morphisms from the $\mu_r$-gerbes to the base~$B$ are still proper, so that the formalisms of \cite{BBD, CH} still apply. 
\end{rmk}

To set up the Fourier--Mukai transforms, we first define the isotypic category $\mathrm{Coh}(\overline{\CJ}^d_C)_{(k)}$ (resp.~$D^b\mathrm{Coh}(\overline{\CJ}^d_C)_{(k)}$) to be the full subcategory of $\mathrm{Coh}(\overline{\CJ}^d_C)$ (resp.~$D^b\mathrm{Coh}(\overline{\CJ}^d_C)$) consisting of objects for which the action of $\mu_r$ on fibers is given by the character $\lambda \mapsto \lambda^k$ of $\mu_r$. Then we consider two integers $d,e$. As in Section~\ref{Sec3.6}, we plug $\CF^d, \CF^e$ in Arinkin's formula \cite[(1.1)]{A2} and obtain a Poincar\'e sheaf\footnote{More precisely, the universal sheaves $\CF^e, \CF^d$ define a twisted line bundle 
\[
\mathrm{det}\left(p_{13*}(p^*_{12}\CF^e \otimes p^*_{23}\CF^d)\right)\otimes \mathrm{det}\left(p_{13*}\CO_{\overline{\CJ}^e_C \times_B C\times_B\overline{\CJ}^d_C} \right)\otimes \mathrm{det}\left(p_{13*}p^*_{12}\CF^e \right)^\vee \otimes \mathrm{det}\left(p_{13*}p^*_{23}\CF^d \right)^\vee
\]
on ${\CJ}^e_C \times_B \overline{\CJ}^d_C \cup \overline{\CJ}^e_C \times_B {\CJ}^d_C$, where $p_{ij}$ are the natural projections from $\overline{\CJ}^e_C \times_BC\times_B\overline{\CJ}^d_C$ to the corresponding factors. Its Cohen--Maucalay extension is defined to be $\CP_{e,d}$.
}
\[
\CP_{e,d} : = \CP(\CF^e, \CF^d) \in \mathrm{Coh}( \overline{\CJ}^{e}_C \times_B \overline{\CJ}^d_C )_{(d, e)}.
\]
We define 
\[
\CP_{e,d}^{-1}: = \CH\mathrm{om}_{\overline{\CJ}^{e}_C \times_B \overline{\CJ}^d_C} ( \CP_{e,d}, \, p_2^* \omega_{\pi_{d}}) [g]
\]
where $\omega_{\pi_{d}}$ is the relative canonical bundle with respect to $\pi_{d}: \overline{\CJ}^{d}_C \to B$, and we view $\CP_{e,d}^{-1}$ as in~$D^b\Coh(\overline{\CJ}^{d}_C \times_B \overline{\CJ}^e_C)_{(-e, -d)}$. Because $\CP_{e,d}$ lies in the isotypic category, the Fourier--Mukai transform
\[
\mathrm{FM}_{\CP_{e,d}}: D^b\mathrm{Coh}(\overline{\CJ}^e_C) \rightarrow  D^b\mathrm{Coh}(\overline{\CJ}^{d}_C)
\]
 is only nonzero on the following isotypic components:
\[
\mathrm{FM}_{\CP_{e,d}}: D^b\mathrm{Coh}(\overline{\CJ}^e_C)_{(-d)} \rightarrow D^b\mathrm{Coh}(\overline{\CJ}^{d}_C)_{(e)}.
\]

The next proposition is parallel to \cite[Proposition 3.1(i)]{ADM} and \cite[Theorem 4.7]{GShen}.

\begin{prop}\label{prop4.1}
The objects $\CP_{e,d}, \CP^{-1}_{e,d}$ are inverse to each other as Fourier--Mukai kernels:
\[
\mathrm{FM}_{\CP_{e,d}}: D^b\mathrm{Coh}(\overline{\CJ}^e_C)_{(-d)} \xrightarrow{~~\simeq~~} D^b\mathrm{Coh}(\overline{\CJ}^{d}_C)_{(e)},\quad \mathrm{FM}_{\CP^{-1}_{e,d}}=\mathrm{FM}^{-1}_{\CP_{e,d}}.
\]
\end{prop}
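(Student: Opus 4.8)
The plan is to reduce the statement to Arinkin's autoduality theorem (recalled in Section \ref{Sec3.2}) together with the twisting dictionary of Section \ref{Sec3.6}. The point is that, after an \'etale base change on $B$, the gerbe $\overline{\CJ}^d_C$ is resolved into an honest compactified Jacobian carrying a genuine normalized universal family, and under this resolution $\CP_{e,d}$ becomes Arinkin's Poincar\'e sheaf tensored by line bundles pulled back from the two factors --- a modification that preserves invertibility of the Fourier--Mukai kernel, with the inverse being exactly the Grothendieck--Serre dual prescribed in the definition of $\CP^{-1}_{e,d}$.

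First I would produce the local section. Since the fibers of $C\to B$ are integral, hence generically smooth, the smooth locus $C^{\mathrm{sm}}\to B$ is smooth and surjective, so there is an \'etale cover $\rho\colon B'\to B$ with a section $B'\to C^{\mathrm{sm}}\subset C\times_B B'$. Base change along $\rho$ preserves all hypotheses: writing $C':=C\times_B B'$, the compactified Jacobians $\overline{J}^d_{C'}=\overline{J}^d_C\times_B B'$ and the gerbes $\overline{\CJ}^d_{C'}$ remain nonsingular and quasi-projective, and composition of Fourier--Mukai kernels commutes with the flat base change $\rho$. Over $B'$ there is a normalized universal sheaf $\CF^d_0$ on $C'\times_{B'}\overline{J}^d_{C'}$, and comparing it with the universal sheaf $\CF^d$ on the gerbe gives
\[
\CF^d\simeq (\mathrm{id}_{C'}\times\varpi)^*\CF^d_0\otimes p_J^*\CN_d,
\]
where $\varpi\colon \overline{\CJ}^d_{C'}\to\overline{J}^d_{C'}$ is the gerbe projection and $\CN_d$ is a line bundle on $\overline{\CJ}^d_{C'}$ of $\mu_r$-weight $1$, determined (up to a twist by a line bundle pulled back from $\overline{J}^d_{C'}$) by the trivialization of $\CF^d$ along $D$; concretely $\CN_d^{\otimes r}$ is the pullback of $(\CR^d)^{\pm 1}$.

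Next, feeding $\CF^e\simeq \varpi^*\CF^e_0\otimes p_J^*\CN_e$ and $\CF^d\simeq \varpi^*\CF^d_0\otimes p_J^*\CN_d$ into Arinkin's formula and invoking the twist computation of Section \ref{Sec3.6} (together with \cite[Proposition 2.1]{ADM} for the degree shifts identifying $\overline{J}^d_{C'}$ with $\overline{J}^0_{C'}$ and the $\CO_{C'}(dB')$-twists), I would identify $\CP_{e,d}|_{B'}$, up to pullback along $\varpi\times_{B'}\varpi$, with $\CP_0\otimes(L_1\boxtimes L_2)$, where $\CP_0$ is Arinkin's Poincar\'e sheaf on $\overline{J}^e_{C'}\times_{B'}\overline{J}^d_{C'}$ and $L_1,L_2$ are line bundles pulled back from the two factors built out of $\CN_e^{\otimes d},\CN_d^{\otimes e}$ and the degree-shift bundles (the exponents $d,e$ being forced by the isotypic index $(d,e)$ of $\CP_{e,d}$). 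Since $\CP_0$ is an invertible Fourier--Mukai kernel by Arinkin's theorem, so is $\CP_0\otimes(L_1\boxtimes L_2)$, with inverse $\CP_0^{-1}\otimes(L_2^{-1}\boxtimes L_1^{-1})$; pushing the recipe $\CP\mapsto\CP^\vee\otimes p_2^*\omega_{\pi_d}[g]$ and the $\mu_r$-weights through the twists, one checks that this inverse is precisely $\CP^{-1}_{e,d}|_{B'}$, landing in the isotypic component of index $(-e,-d)$. Finally, the unit and counit maps for the adjunction between $\mathrm{FM}_{\CP_{e,d}}$ and $\mathrm{FM}_{\CP^{-1}_{e,d}}$ --- which are defined over $B$ since $\overline{\CJ}^d_C\to B$ is flat and Gorenstein, so that $\CP^{-1}_{e,d}$ is the Grothendieck--Serre right adjoint kernel --- become isomorphisms after restriction to $B'$ by the above, and ``being an isomorphism in $D^b\mathrm{Coh}$'' descends along the \'etale cover $\rho$; hence they are isomorphisms over $B$, which is the assertion.

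I expect the main work to be bookkeeping rather than conceptual: keeping precise track of all the external twists ($\CN_d,\CN_e$, the $\CO_{C'}(dB')$-twists, the degree identifications) and, crucially, of the $\mu_r$-weights, so that the Grothendieck--Serre inverse of the twisted kernel matches $\CP^{-1}_{e,d}$ as defined, in the correct isotypic component. The essential geometric inputs --- Arinkin's autoduality \cite{A2} and the twisting lemma of Section \ref{Sec3.6} --- are already available. (One could alternatively bypass $B'$ and argue fiberwise over $B$, applying Arinkin's single-curve autoduality on each $C_b$ with a chosen smooth point, at the price of a more delicate base-change argument for the compositions of kernels.)
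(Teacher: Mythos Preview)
Your proposal is correct and follows essentially the same route as the paper: set up the canonical adjunction (unit/counit) morphisms over $B$, observe that their being isomorphisms can be checked \'etale locally, and over an \'etale neighborhood with a section identify $\CP_{e,d}$ with the pullback of Arinkin's Poincar\'e sheaf twisted by line bundles $\CL_e^{\otimes d}\boxtimes\CL_d^{\otimes e}$ from the two factors (this is the paper's Corollary~\ref{cor4.3}), whence invertibility and the form of the inverse follow from Arinkin's theorem. The paper organizes this by deferring the \'etale-local identification to Section~\ref{Sec4.3} (Proposition~\ref{prop4.2} and Corollary~\ref{cor4.3}), but the content is the same as what you outline.
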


\begin{proof}
Recall that for $\CP_{e,d}$ and $\CP^{-1}_{e,d}$ we always have the canonical adjunction morphisms
\[
\mathrm{id}_{D^b\mathrm{Coh}(\overline{\CJ}^e_C)_{(-d)}} \to\mathrm{FM}_{\CP^{-1}_{e,d}} \circ \mathrm{FM}_{\CP_{e,d}}, \quad 
\mathrm{FM}_{\CP_{e,d}}\circ \mathrm{FM}_{\CP^{-1}_{e,d}} \to \mathrm{id}_{D^b\mathrm{Coh}(\overline{\CJ}^{d}_C)_{(e)}}.
\]
It suffices to show that their cones in the derived categories are $0$; this is a property which can be checked \'etale locally over the base $B$. We postpone the \'etale local calculations to Corollary~\ref{cor4.3} below.
\end{proof}

\subsection{\'Etale local structures}\label{Sec4.3}

In this section, we carry out the \'etale local calculation for the Fourier--Mukai transforms given by $\CP_{e,d}, \CP^{-1}_{e,d}$. This leads to two consequences. First, the calculation completes the proof of Proposition \ref{prop4.1}. Second, we will show as in Proposition~\ref{prop3.6} that the motivic decomposition induced by the Fourier transforms $\mathfrak{F}, \mathfrak{F}^{-1}$ associated with~$\CP_{e,d}, \CP^{-1}_{e,d}$ specializes to the (standard) perverse filtration; see Section \ref{Sec4.4}.

Now we work \'etale locally over the base $B$. Let $U$ be an \'etale neighborhood of the base. We may assume that $C \to U$ admits simultaneously a section and a multisection
\[
U \subset C \to U, \quad D \subset C \to U.
\]
They are independent and do not have any nontrivial relation. Due to the existence of the section, the relative compactified Jacobian $\overline{J}^d_C$ are identified for any choices of $d$. As before, we denote by $\overline{J}_C$ the degree $0$ compactified Jacobian, and we want to compare $\CF^{d}$ on $C \times_U \overline{\CJ}^d_C$ (which is trivialized along the multisection $D$) with the normalized universal sheaf $\CF$ on $C \times_U \overline{J}_C$ (which is trivialized along the section $U$).

\begin{prop}\label{prop4.2}
There is a $U$-morphism
\[
\sigma_d: \overline{\CJ}^d_C \to \overline{J}_C
\]
satisfying the following properties.
\begin{enumerate}
    \item[(i)] We have 
    \[
    \CF^d \simeq (\mathrm{id}_C \times_U \sigma_d)^* \CF \otimes p_C^* \CO_{C}(dU) \otimes p_\CJ^* \CL_d, \quad \CL_d \in \mathrm{Pic}(\overline{\CJ}^d_C),
    \]
    where $p_C, p_\CJ$ are the natural projections from $C \times_U \overline{\CJ}^d_C$.
   \item[(ii)] For any $b\in U$ with $C_b$ a nonsingular curve, the restriction of $\CL_d$ to the fiber $\CJ^d_{C_b}$ has trivial first Chern class in $H^2({\CJ}^d_{C_b}, \BQ)$.
\end{enumerate}
\end{prop}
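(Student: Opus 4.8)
The plan is to produce $\sigma_d$ by composing the $\mu_r$-gerbe structure map with a degree twist, to identify $\CL_d$ as the discrepancy between $\CF^d$ and the pulled-back normalized universal sheaf, and then to kill $\CL_d$ cohomologically on nonsingular fibers using the trivialization along $D$ together with the K\"unneth shape of the Chern class of the normalized Poincar\'e sheaf. Let $\rho_d\colon\overline{\CJ}^d_C\to\overline{J}^d_C$ be the structure map of the $\mu_r$-gerbe, and recall that, since $C\to U$ has a section, tensoring fiberwise with $\CO_C(-dU)$ defines an isomorphism $t_d\colon\overline{J}^d_C\xrightarrow{\ \sim\ }\overline{J}_C$ over $U$; set $\sigma_d:=t_d\circ\rho_d$. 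To establish (i), consider on $C\times_U\overline{\CJ}^d_C$ the sheaf
\[
\CG := (\mathrm{id}_C\times_U\sigma_d)^*\CF\otimes p_C^*\CO_C(dU),
\]
a $U$-flat family of rank-$1$ degree-$d$ torsion-free sheaves. For a geometric point $x$ of $\overline{\CJ}^d_C$ over $b\in U$ with underlying sheaf $F_x\in\overline{J}^d_{C_b}$ one computes $\CG|_{C_b\times\{x\}}=\big(F_x\otimes\CO_{C_b}(-dU_b)\big)\otimes\CO_{C_b}(dU_b)=F_x=\CF^d|_{C_b\times\{x\}}$. Thus $\CG$ and $\CF^d$ are two families inducing the same morphism $\overline{\CJ}^d_C\to\overline{J}^d_C$; since $\overline{\mathfrak{J}}^d_C\to\overline{J}^d_C$ is a $\BG_m$-gerbe through which $\overline{\CJ}^d_C$ factors, two such families differ by the pullback of a line bundle on $\overline{\CJ}^d_C$, which gives a unique $\CL_d\in\mathrm{Pic}(\overline{\CJ}^d_C)$ (automatically in the weight-one isotypic piece for the $\mu_r$-action) with $\CF^d\simeq\CG\otimes p_\CJ^*\CL_d$, i.e.\ (i).

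For (ii), fix $b\in U$ with $C_b$ nonsingular, so that $\CJ^d_{C_b}$ is a $\mu_r$-gerbe over $\mathrm{Pic}^d(C_b)$ and $H^2(\CJ^d_{C_b},\BQ)\xrightarrow{\ \sim\ }H^2(\mathrm{Pic}^d(C_b),\BQ)$ since $B\mu_r$ has no positive rational cohomology. Restricting (i) over $b$ and taking $c_1$ yields
\[
c_1\big(\CF^d|_{C_b\times\CJ^d_{C_b}}\big)=(\mathrm{id}\times\sigma_d)^*c_1\big(\CF|_{C_b\times\mathrm{Pic}^0(C_b)}\big)+d\,[U_b\times\CJ^d_{C_b}]+p_\CJ^*c_1\big(\CL_d|_{\CJ^d_{C_b}}\big).
\]
Now $\CF|_{C_b\times\mathrm{Pic}^0(C_b)}$ is the normalized Poincar\'e line bundle, whose $c_1$ restricts to $0$ on each $C_b\times\{L\}$ (degree zero) and on $\{U_b\}\times\mathrm{Pic}^0(C_b)$ (normalization along the section), hence lies in $H^1(C_b,\BQ)\otimes H^1(\mathrm{Pic}^0(C_b),\BQ)$ by K\"unneth. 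Restricting the displayed identity further to $D_b\times\CJ^d_{C_b}$ therefore kills the first term (its $H^1(C_b)$-factor maps into $H^1(D_b,\BQ)=0$, as $D_b$ is $0$-dimensional) and the second (it factors through $H^2(D_b,\BQ)=0$), so $c_1\big(\CF^d|_{D_b\times\CJ^d_{C_b}}\big)$ is pulled back from $c_1(\CL_d|_{\CJ^d_{C_b}})$. Pushing forward along the finite flat degree-$r$ projection $D_b\times\CJ^d_{C_b}\to\CJ^d_{C_b}$ — whose relative Todd class is trivial, $D_b$ being a Cartier divisor in the nonsingular curve $C_b$ — and invoking the trivialization of $\CF^d$ along $D$ (so $\det p_{\CJ*}(\CF^d|_{D_b\times\CJ^d_{C_b}})\simeq\CO$), we obtain
\[
0=c_1\big(\det p_{\CJ*}(\CF^d|_{D_b\times\CJ^d_{C_b}})\big)=r\,c_1\big(\CL_d|_{\CJ^d_{C_b}}\big)\in H^2(\CJ^d_{C_b},\BQ),
\]
whence $c_1(\CL_d|_{\CJ^d_{C_b}})=0$, proving (ii).

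The serious part is the bookkeeping rather than any deep new ingredient. One must keep track of the $\mu_r$-weights so that $\CL_d$ is genuinely a line bundle on the gerbe and every identification (in particular the evaluation isomorphism proving (i)) is $\mu_r$-equivariant, and in (ii) one must isolate the correct K\"unneth component of $c_1(\CF^d|_{C_b\times\CJ^d_{C_b}})$ and verify that the contributions of $\CF$ and of $\CO_C(dU)$ drop out after restricting to $D_b$ and pushing forward. This is precisely where the two structural hypotheses are used: normalizing $\CF$ along the section removes the unwanted $H^0(C_b)\otimes H^2$ component, and the degree $r$ of the multisection turns the obstruction into an $r$-torsion class in $H^2$ — which is exactly why passing to the $\mu_r$-gerbe is what makes it vanish rationally.
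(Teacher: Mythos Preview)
Your argument is correct and follows essentially the same route as the paper's proof: construct $\sigma_d$ so that $\CF^d$ and $(\mathrm{id}_C\times_U\sigma_d)^*\CF\otimes p_C^*\CO_C(dU)$ agree fiberwise and hence differ by a line bundle $\CL_d$ pulled back from $\overline{\CJ}^d_C$, then on a nonsingular fiber use the K\"unneth shape of $c_1$ of the normalized Poincar\'e bundle together with the trivialization along $D$ to force $r\cdot c_1(\CL_d|_{\CJ^d_{C_b}})=0$ in rational cohomology. The paper phrases (ii) slightly more directly---it restricts $\CF^d$ to $\{s\}\times\CJ^d_{C_b}$ for a single point and observes this $c_1$ is independent of $s$, then sums over $D_b$---but this is exactly your pushforward computation unpacked; your aside on the relative Todd class is inessential (any such contribution is pulled back from the $0$-dimensional $D_b$ and cannot affect $c_1$ on $\CJ^d_{C_b}$).
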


\begin{proof}
The desired morphism $\sigma_d$ can be constructed using the map from the Deligne--Mumford stack to its coarse moduli space, in view of the identification $\overline{J}^d_C \simeq \overline{J}_C$. For our purpose, we describe it in a more direct way as follows.

Note that $\overline{J}_C$ is a fine moduli space. Therefore, describing a morphism from $\overline{\CJ}^d_C$ is equivalent to constructing a family of rank $1$ torsion-free sheaves of degree $0$ trivialized along the section $U \subset C$, for which we may use $\CF^d$. More precisely, we define
\[
\CG^d: = \CF^d \otimes p_C^* \CO_{C}(-dU) \otimes p_\CJ^*\left( (\CF^d \otimes p_C^* \CO_{C}(-dU))|_{U \times_U \overline{\CJ}^d_C} \right)^\vee.
\]
Here the second factor adjusts the degree from $d$ to $0$, and the third factor is included to trivialize the universal sheaf along the section. This yields a morphism $\sigma_d: \overline{\CJ}^d_C \to \overline{J}_C$ satisfying
\[
(\mathrm{id}_C \times_U \sigma_d)^* \CF \simeq \CG^d.
\]
This already proves (i).

To prove (ii), it suffices to show that, for a fixed nonsingular curve $C_b$ with $s \in C_b$ the intersection with the section $U \subset C$, the line bundle $\CF^d|_{s \times \CJ_{C_b}}$ has homologically trivial first Chern class. Assume $D \cap C_b$ is given by $s_1, \dots , s_e \in C_b$ (counted with multiplicities). Then the condition that $\CF^d$ is trivialized along the multisection $D$ implies 
\[
\sum_{i=1}^e c_1\left(\CF^d|_{s_i \times \CJ_{C_b}}\right)  = 0 \in H^2({\CJ}^d_{C_b}, \BQ).
\]
This shows 
\[
c_1\left(\CF^d|_{s \times \CJ_{C_b}}\right)  = 0 \in H^2(\CJ^d_{C_b}, \BQ) 
\]
as desired, since the first Chern class of the left-hand side does not depend on the point in $C_b$. The proof of (ii) is complete.
\end{proof}

Now we consider 
\[
\sigma_e\times_U \sigma_{d}: \overline{\CJ}^e_C \times_U \overline{\CJ}^{d}_C \to \overline{J}_C \times_U \overline{J}_C.
\]
The normalized Poincar\'e sheaf $\CP$ and its inverse $\CP^{-1}$ are canonically defined on the target. The following is an immediate consequence of Proposition \ref{prop4.2} as in Section \ref{Sec3.6} (using the argument of \cite[Proposition 3.1(i)]{ADM}). It provides an \'etale local description of the Poincar\'e sheaf $\CP_{e,d}$ and its inverse $\CP^{-1}_{e,d}$.

\begin{cor}\label{cor4.3}
We have
\[
\CP_{e,d} \simeq (\sigma_e\times_U \sigma_{d})^*\CP \otimes \left(\CL_e^{\otimes d} \boxtimes \CL_{d}^{\otimes e} \right), \quad \CP^{-1}_{e,d} \simeq (\sigma_d\times_U \sigma_{e})^*\CP^{-1} \otimes \left(\CL_d^{\vee \otimes e} \boxtimes \CL_e^{\vee \otimes d} \right).
\]
Here $\CL_d, \CL_e$ are line bundles on $\overline{\CJ}^d_C, \overline{\CJ}^{e}_C$ respectively. Moreover, for any $b \in U$ with $C_b$ nonsingular, the restrictions of these line bundles to the fibers over $b$ have homologically trivial first Chern classes. 
\end{cor}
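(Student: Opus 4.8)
The plan is to substitute the comparison of universal sheaves from Proposition~\ref{prop4.2} directly into Arinkin's formula \cite[(1.1)]{A2} for the Poincar\'e sheaf and to track how the various twists propagate, following the computation of \cite[Proposition~2.1(a)]{ADM} that was already invoked in Section~\ref{Sec3.6}. Write $J := \overline{J}_C$ for the degree $0$ compactified Jacobian over $U$ with its normalized universal sheaf $\CF$ on $C \times_U J$, and let $\sigma_e : \overline{\CJ}^e_C \to J$, $\sigma_d : \overline{\CJ}^d_C \to J$ be the morphisms of Proposition~\ref{prop4.2}, which give
\[
\CF^e \simeq (\mathrm{id}_C \times_U \sigma_e)^*\CF \otimes p_C^*\CO_C(eU) \otimes p_\CJ^*\CL_e, \qquad \CF^d \simeq (\mathrm{id}_C \times_U \sigma_d)^*\CF \otimes p_C^*\CO_C(dU) \otimes p_\CJ^*\CL_d,
\]
with $\CL_e,\CL_d$ as in the statement. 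Plugging these into $\CP_{e,d} = \CP(\CF^e, \CF^d)$ reduces the first assertion to two general features of the assignment $(\CF_1,\CF_2) \mapsto \CP(\CF_1,\CF_2)$, both established in the course of \cite[Proposition~2.1(a)]{ADM}.

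The first feature is base-change compatibility: since $\sigma_e$ and $\sigma_d$ are (flat) $\mu_r$-gerbe structure morphisms over $U$ and Arinkin's sheaf is assembled from derived pullbacks, tensor products and pushforwards, one has $\CP\big((\mathrm{id}_C \times_U \sigma_e)^*\CF,\, (\mathrm{id}_C \times_U \sigma_d)^*\CF\big) \simeq (\sigma_e \times_U \sigma_d)^*\CP$; it is enough to check this over the open locus of nonsingular curves, where $\CP$ is the classical determinant-of-cohomology line bundle and the identity is standard, and it then propagates over all of $\overline{\CJ}^e_C \times_U \overline{\CJ}^d_C$ by the flat base-change compatibility built into Arinkin's construction \cite{A2}. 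The second feature is bilinearity under the remaining twists: the twists by $p_C^*\CO_C(eU)$ and $p_C^*\CO_C(dU)$ merely implement the degree shifts that are already absorbed into the identifications $\overline{\CJ}^e_C, \overline{\CJ}^d_C \to J$ and contribute nothing further, the twist of $\CF^e$ by $p_\CJ^*\CL_e$ multiplies the Poincar\'e sheaf by $\CL_e^{\otimes d}$ on the first factor (the exponent being the degree $d$ of the second universal sheaf), and symmetrically the twist of $\CF^d$ by $p_\CJ^*\CL_d$ contributes $\CL_d^{\otimes e}$ on the second factor. Combining the two gives $\CP_{e,d} \simeq (\sigma_e \times_U \sigma_d)^*\CP \otimes (\CL_e^{\otimes d} \boxtimes \CL_d^{\otimes e})$, which is the first formula.

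For the second formula I would feed this identity into the definition $\CP^{-1}_{e,d} = \CH\mathrm{om}_{\overline{\CJ}^e_C \times_B \overline{\CJ}^d_C}(\CP_{e,d}, p_2^*\omega_{\pi_d})[g]$, read off after swapping the two factors. Since $\sigma_d$ is a gerbe morphism over $U$ we have $\omega_{\pi_d} \simeq \sigma_d^*\omega_\pi$, so $p_2^*\omega_{\pi_d}$ is pulled back along $\sigma_e \times_U \sigma_d$; a routine compatibility of $\CH\mathrm{om}$ with this flat pullback (again clear over the nonsingular locus and extended by the same principle) turns the line-bundle twist into its inverse and yields $\CH\mathrm{om}(\CP_{e,d}, p_2^*\omega_{\pi_d})[g] \simeq (\sigma_e \times_U \sigma_d)^*\CP^{-1} \otimes (\CL_e^{\vee \otimes d} \boxtimes \CL_d^{\vee \otimes e})$; swapping the two factors produces the stated formula for $\CP^{-1}_{e,d}$. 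The homological triviality of the first Chern classes of $\CL_d|_{\CJ^d_{C_b}}$ and $\CL_e|_{\CJ^e_{C_b}}$ for nonsingular $C_b$ is precisely Proposition~\ref{prop4.2}(ii), so there is nothing further to prove there.

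The only genuine subtlety, and the step I would treat most carefully, is the first feature: Arinkin's Poincar\'e sheaf is delicate exactly over the singular fibers, so one must make sure that base change along $\sigma_e, \sigma_d$ and bilinearity under the line-bundle twists hold there and not merely over the smooth locus. I expect this to go through exactly as in \cite{ADM}: prove the comparisons over the locus of nonsingular curves, where they reduce to classical statements about Poincar\'e bundles of Jacobians, and extend using the flat base-change compatibility of Arinkin's construction. Everything else is bookkeeping of degrees, twists, and the factor swap.
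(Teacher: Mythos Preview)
Your proposal is correct and follows essentially the same approach as the paper: the paper's proof is a one-liner stating that the corollary is an immediate consequence of Proposition~\ref{prop4.2} combined with the argument of \cite[Proposition~2.1(a)]{ADM} (as already invoked in Section~\ref{Sec3.6}), and your write-up simply unpacks that reference in more detail. The homological triviality is indeed just Proposition~\ref{prop4.2}(ii), exactly as you say.
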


\subsection{Multiplicativity and perversity} \label{Sec4.4}

The goal of this section is to provide an extension of Theorem \ref{thm:main} and Corollary \ref{motivic_decomp} for the degree $d$ compactified Jacobian $\pi_d: \overline{J}^d_C \to B$. Note that we chose to first work out the untwisted case for purely expository reasons, so that the Fourier theory can be streamlined and separated from the language of stacks. The twisted case requires the following three additional ingredients.


\begin{enumerate}
\item[(i)] Since we assume $\overline{J}^d_C$ to be a quasi-projective variety, by \cite{dJ} the Brauer map
\[
\mathrm{Br}(\overline{J}^d_C) \to H^2(\overline{J}^d_C, \BG_m)_{\mathrm{tors}}
\]
is an isomorphism. Then \cite[Theorem~3.6]{EHKV} implies that the $\mu_r$-gerbe $\overline{\CJ}^d_C$ over $\overline{J}^d_C$ is a quotient stack. This also applies to fiber products of $\overline{\CJ}^d_C$ for various $d$ over the base scheme~$B$.

\item[(ii)] The intersection theory of quotient stacks has been developed in a series of papers of Edidin and Graham; we refer to \cite{E} for a summary of the theory and the references therein. In particular, the Riemann--Roch functor $\tau(-)$ for quotient stacks was constructed in \cite{EG2} and was shown to satisfy the same functorial properties as in Section~\ref{Sec2.3} except for 
the covariance under arbitrary proper morphisms. The Edidin--Graham functor $\tau(-)$ is only covariant under proper \emph{representable} morphisms of quotient stacks.

We also note that with $\BQ$-coefficients, the Edidin--Graham Chow groups of a Deligne--Mumford quotient stack are canonically isomorphic (via pushforward) to the Chow groups of the coarse moduli space; see \cite{EG, V}.

\item[(iii)] The $K$-theory of quotient stacks has been studied by Anderson, Gonzales, and Payne \cite{AP, AGP} in the style of Fulton and Edidin--Graham. For our purposes we shall need the construction of Adams operations and the scaling \eqref{eq:scale} for closed embeddings of (possibly singular) quotient stacks, both of which were established in \cite[Section 4]{AGP}.
\end{enumerate}

We are now ready to run the entire argument of Sections \ref{Sec2} and \ref{comp_jac} for $\pi_d: \overline{J}^d_C \to B$.

\medskip
\noindent {\bf Step 1.} Choosing an arbitrary integer $e$, we consider the Poincar\'e sheaf $\CP_{e, d}$ and its inverse~$\CP_{e, d}^{-1}$. We define the Fourier transforms
\[
\FF_{e, d} \in \mathrm{CH}_*(\overline{\CJ}^e_C \times_B \overline{\CJ}^d_C) = \mathrm{CH}_*(\overline{J}^e_C \times_B \overline{J}^d_C), \quad \FF_{e, d}^{-1} \in \mathrm{CH}_*(\overline{\CJ}^d_C \times_B \overline{\CJ}^e_C) = \mathrm{CH}_*(\overline{J}^d_C \times_B \overline{J}^e_C)
\]
following the recipe of \eqref{eq:defFF} and \eqref{eq:defFF-1} and using the Edidin-Graham functor $\tau(-)$.

To prove the identities parallel to Lemma \ref{lem:ff-1}, we first note that the Fourier--Mukai kernel~$\CE$ of $\mathrm{id}_{D^b\mathrm{Coh}(\overline{\CJ}^e_C)_{(-d)}}$ is given by a line bundle on a $\mu_r \times \mu_r$-gerbe over the relative diagonal $\overline{J}^e_C \subset \overline{J}^e_C \times_B \overline{J}^e_C$. The line bundle, which corresponds to the character $(\lambda, \lambda') \mapsto \lambda^d\lambda'^{-d}$ of~$\mu_r \times \mu_r$, has its $r$-th tensor power the trivial bundle. Hence by the functorialities of~$\tau(-)$, we find that $\tau(\CE)$ (when pushed forward to $\overline{J}^e_C \times \overline{J}^e_C$) is as before given by
\[
\mathrm{td}(q_1^{e*}T_{\overline{J}^e_C}) \cap [\Delta_{\overline{J}^e_C/B}] \in \mathrm{CH}_*(\overline{J}^e_C \times \overline{J}^e_C)
\]
up to a factor of $1/r^2$, where $q_1^e: \overline{J}^e_C \times_B \overline{J}^e_C \to \overline{J}^e_C$ is the first projection. To compute $\tau(\CP_{e, d}^{-1} \circ \CP_{e, d})$ we observe that there is exactly one appearance of a non-representable proper pushforward in the whole process, namely the pushforward to the first and the third factors. On the other hand, since the object
\[
p_{12}^*\CP_{e, d} \otimes p_{23}^*\CP^{-1}_{e, d} \in D^b\mathrm{Coh}(\overline{\CJ}^e_C \times_B \overline{\CJ}^d_C \times \overline{\CJ}^e_C)_{(d, 0, -d)}
\]
descends to an object in $D^b\mathrm{Coh}(\overline{\CJ}^e_C \times_B \overline{J}^d_C \times_B \overline{\CJ}^e_C)$, it suffices to apply the covariance of~$\tau(-)$ under the proper \emph{representable} morphism
\[
p_{13}: \overline{\CJ}^e_C \times_B \overline{J}^d_C \times_B \overline{\CJ}^e_C \to \overline{\CJ}^e_C \times_B \overline{\CJ}^e_C.
\]
This yields the identity
\[
\FF_{e, d}^{-1} \circ \FF_{e, d} = \frac{1}{r^4}[\Delta_{\overline{J}^e_C/B}] \in \Corr^0_B(\overline{J}^e_C, \overline{J}^e_C),
\]
where the factor of $1/r^4$ is due to the fact that each $\overline{\CJ}^d_C \to \overline{J}^d_C$ is of degree $1/r$. The other identity is parallel.

\medskip
\noindent {\bf Step 2.} The next logical step is the Fourier vanishing (FV). We first remark that the Arinkin-type dimension bounds of Sections \ref{AV} and \ref{Sec3.5} are properties of coherent sheaves which can be checked \'etale locally.

To proceed with the Adams argument, we consider a sequence of~$K$-theory classes supported in codimension $g$:
\begin{equation} \label{eq:adamstw}
\CP_{e, d}^{-1} \circ \psi^N(\CP_{e, d}) \in K_*(\overline{\CJ}^e_C \times_B \overline{\CJ}^{e}_C), \quad N \equiv 1 \pmod{r},
\end{equation}
where $\psi^N(-)$ stand for the Adams operations of Anderson--Gonzales--Payne. Here the congruence condition on $N$ guarantees that the class
\[
p_{12}^*\psi^N(\CP_{e, d}) \otimes p_{23}^*\CP^{-1}_{e, d} \in K_*(\overline{\CJ}^e_C \times_B \overline{\CJ}^d_C \times_B \overline{\CJ}^{e}_C)_{(d, 0, -d)}
\]
descends to a class in $K_*(\overline{\CJ}^e_C \times_B \overline{J}^d_C \times_B \overline{\CJ}^{e}_C)$ so that the Riemann--Roch calculations work in the same way as in Step 1. We then repeat the calculations of Section \ref{Sec3.5.3} using the infinite sequence \eqref{eq:adamstw}, and deduce the Fourier vanishing
\begin{equation}
\tag{FV} \quad \quad (\FF_{e, d}^{-1})_i \circ (\FF_{e, d})_j = 0,\quad  i+j < 2g.
\end{equation}

Consequently, we obtain pairs of orthogonal projectors $\Fp_k^{e, d}, \Fq_{k + 1}^{e,d}$ (depending on $e$) and the corresponding motives $P_k^{e}h(\overline{J}_C^d), Q_{k + 1}^{e}h(\overline{J}_C^d)$. We also deduce the Perverse $\supset$ Chern statement of Theorem~\ref{thm:main} for $\pi_d: \overline{J}^d_C \to B$, as well as the decomposition part of Corollary \ref{motivic_decomp}.

\medskip
\noindent {\bf Step 3.}
For the multiplicativity statement we choose two arbitrary integers $e_1, e_2$. We consider the convolution kernel
\begin{equation*} \label{eq:convtw}
\CK_{e_1, e_2, d} \in D^b\mathrm{Coh}(\overline{\CJ}^{e_1}_C \times_B \overline{\CJ}^{e_2}_C \times_B \overline{\CJ}^{e_1 + e_2}_C)_{(d, d, -d)}
\end{equation*}
obtained from $\CP_{e_1, d}, \CP_{e_2, d}$ and $\CP^{-1}_{e_1 + e_2, d}$ together with \eqref{convolution}. As in Step 2 an \'etale local verification shows that $\CK_{e_1, e_2, d}$ is supported in codimension $g$. We define the Chow class $\FC_{e_1, e_2, d}$ following \eqref{eq:chowconv}.

The identity parallel to Lemma \ref{lem:conv} is proven in the same way as in Step 1. The Fourier--Mukai kernel of
\[
\otimes: D^b\mathrm{Coh}(\overline{\CJ}^d_C)_{(e_1)} \times D^b\mathrm{Coh}(\overline{\CJ}^d_C)_{(e_2)} \to D^b\mathrm{Coh}(\overline{\CJ}^d_C)_{(e_1 + e_2)}
\]
is given by a torsion line bundle on a $\mu_r \times \mu_r \times \mu_r$-gerbe over the small relative diagonal $\overline{J}^d_C \subset \overline{J}^d_C \times_B \overline{J}^d_C \times_B \overline{J}^d_C$. On the other hand, the computation of
\[
\tau\left(\CP_{e_1 + e_2, d} \circ \CK_{e_1, e_2, d} \circ (\CP^{-1}_{e_1, d} \boxtimes \CP^{-1}_{e_2, d})\right)
\]
only involves pushforwards via proper representable morphisms.

The rest of the argument is identical to the untwisted case in Section \ref{Sec2.5.3}.


\medskip
\noindent {\bf Step 4.} Finally, we verify for $\pi_d: \overline{J}^d_C \to B$ the realization statement of Theorem \ref{thm:main} which implies the realization part of Corollary \ref{motivic_decomp}. Ng\^o's support theorem holds equally in the twisted case: all perverse sheaves appearing in the decomposition theorem for $\pi_d: \overline{J}^d_C \to B$ have full support. Then, as in Section \ref{Sec2.5.2}, it suffices to check the homological realization of~$P_k^{e}h(\overline{J}_C^d)$ over an open subset of the base $B$ supporting nonsingular curves. Note that by \'etale descent of perverse sheaves, we can even replace the open subset by an \emph{\'etale} neighborhood~$U$ of $B$.

We are placed in the setup of Section \ref{Sec4.3}. By Corollary \ref{cor4.3}, over $U$ the Poincar\'e sheaf~$\CP_{e,d}$ differs from the normalized $\CP$ only by line bundles which are fiberwise homologically trivial. Now a proof identical to that of Proposition \ref{prop3.6} shows that the homological realization of~$P_k^{e}h(\overline{J}_C^d)$ is precisely ${^\mathfrak{p}}\tau_{\leq k+\dim B} \pi_{d*} \BQ_{\overline{J}^d_C}$. The homological realization of $Q_{k + 1}^{e}h(\overline{J}_C^d)$ is parallel. To conclude, we have just obtained the following version of Theorem \ref{thm:main} and Corollary~\ref{motivic_decomp} for~$\pi_d: \overline{J}^d_C \to B$.


\begin{cor} \label{cor:main}
The following hold for $\pi_d: \overline{J}^d_C \to B$.
\begin{enumerate}
\item[(i)] (Decomposition) For each $e$ and $k$, there is a decomposition of motives
\[
h(\overline{J}_C^d) = P_k^eh(\overline{J}_C^d) \oplus Q_{k + 1}^eh(\overline{J}_C^d) \in \mathrm{CHM}(B)
\]
with $P_k^eh(\overline{J}_C^d) = (\overline{J}_C^d, \Fp_k^{e, d}, 0),\, Q_{k + 1}^eh(\overline{J}_C^d) = (\overline{J}_C^d, \Fq_{k + 1}^{e,d}, 0)$.

\item[(ii)] (Realization) For each $e$ and $k$, the homological realization of $P_k^eh(\overline{J}_C^d)$ together with the inclusion $P_k^eh(\overline{J}_C^d) \to h(\overline{J}_C^d)$ is the natural morphism
\[
{^\mathfrak{p}}\tau_{\leq k+\dim B} \pi_{d*} \BQ_{\overline{J}_C^d} \to \pi_{d*}\BQ_{\overline{J}_C^d}.
\]
Similarly, the homological realization of $h(\overline{J}_C^d) \to Q_{k + 1}^eh(\overline{J}_C^d)$ is the natural morphism
\[
\pi_{d*}\BQ_{\overline{J}_C^d} \to {^\mathfrak{p}}\tau_{\geq k + 1 + \dim B} \pi_{d*} \BQ_{\overline{J}_C^d}.
\]

\item[(iii)] (Multiplicativity) For each pair of $e_1, e_2$, and each pair of $k, l$, the cup-product
\[
\cup: h(\overline{J}_C^d) \times h(\overline{J}_C^d) \to h(\overline{J}_C^d)
\]
projects to zero on
\[
\cup: P_k^{e_1}h(\overline{J}_C^d) \times P_l^{e_2}h(\overline{J}_C^d) \to Q_{k + l + 1}^{e_1 + e_2}h(\overline{J}_C^d).
\]

\item[(iv)] (Perverse $\supset$ Chern) For each $e$ and $k$, the morphism $(\FF_{e, d})_k: h(\overline{J}_C^e)(g - k) \to h(\overline{J}_C^d)$ projects to zero on
\[
(\FF_{e, d})_k: h(\overline{J}_C^e)(g - k) \to Q_{k + 1}^eh(\overline{J}_C^d).
\]
\end{enumerate}
\end{cor}

\begin{cor} \label{cor:motdectwist}
There exists a decomposition of motives
\[
h(\overline{J}_C^d) = \bigoplus_{i = 0}^{2g}h_i(\overline{J}_C^d) \in \mathrm{CHM}(B)
\]
whose homological realization recovers the decomposition theorem for $\pi_d: \overline{J}^d_C \to B$.
\end{cor}

\subsection{Abel--Jacobi maps}

In addition, we treat the Abel--Jacobi map which recovers the universal family of $1$-dimensional sheaves from the Poincar\'e sheaf. We follow essentially \cite[Proposition~3.1(ii)]{ADM}.

Recall the Abel--Jacobi map to the degree $1$ compactified Jacobian associated with $C \to B$:
\[
\mathrm{AJ}: C \rightarrow \overline{J}^1_C, \quad  ( x \in C_b ) \mapsto \mathfrak{m}^\vee_{x/C_b}.
\]
Pulling back along the $\mu_r$-gerbe $\overline{\CJ}^1_C \to \overline{J}^1_C$, we obtain the Abel--Jacobi map between Deligne--Mumford stacks
\[
\mathrm{AJ}: \CC \to \overline{\CJ}^1_C.
\]
Here $\CC$ is a $\mu_r$-gerbe over $C$ with the structure map $\sigma_\CC: \CC \to C$.

For any integer $d$, we consider the universal family $\CF^d$ normalized along the multisection~$D$ (see Section \ref{Sec4.2}), and the Poincar\'e sheaf $\CP_{1,d}$ on $\overline{\CJ}_C^1\times_B \overline{\CJ}_C^d$. The Abel--Jacobi map above induces
\[
\mathrm{AJ} \times_B \mathrm{id}_{\overline{\CJ}^d_C}: \CC \times_B \overline{\CJ}^d_C \to \overline{\CJ}_C^1\times_B \overline{\CJ}_C^d.
\]

\begin{prop}[\emph{c.f.}~{\cite[Proposition 3.1(ii)]{ADM}}]\label{Prop4.6}
We have 
\[
(\mathrm{AJ} \times_B \mathrm{id}_{\overline{\CJ}^d_C})^*\CP_{1,d} \simeq (\sigma_\CC\times_B\mathrm{id}_{\overline{\CJ}^d_C})^*\CF^d\otimes p_\CC^* \CN \otimes \CB.
\]
Here $\CN$ is a line bundle on $\CC$ given by a $\BQ$-divisor proportional to $D$, and $\CB$ is the pullback of a line bundle on the base $B$.
\end{prop}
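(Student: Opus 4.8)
The plan is to reduce the statement, via an \'etale-local computation over $B$, to the untwisted case \cite[Proposition~2.1(b)]{ADM}, and then to keep track of the resulting line-bundle twists. Since the asserted isomorphism is one of coherent sheaves on $\CC \times_B \overline{\CJ}^d_C$ and the multisection $D$ is defined over all of $B$, it suffices to produce it over an \'etale neighborhood $U \to B$, with $\CN$ of the form $\CO_\CC(c\,\sigma_\CC^*D)$ for a rational number $c$ independent of $U$ and with $\CB$ pulled back from $U$; such data then glue. Over such a $U$ I would fix a section of $C \to U$, placing us in the setup of Section \ref{Sec4.3}, and invoke Proposition \ref{prop4.2} and Corollary \ref{cor4.3} (with $e = 1$) to write
\[
\CP_{1,d} \simeq (\sigma_1 \times_U \sigma_d)^*\CP \otimes \left(\CL_1^{\otimes d} \boxtimes \CL_d\right), \qquad \CF^d \simeq (\mathrm{id}_C \times_U \sigma_d)^*\CF \otimes p_C^*\CO_C(dU) \otimes p_\CJ^*\CL_d,
\]
where $\CP$ and $\CF$ are the normalized Poincar\'e sheaf and universal sheaf on the degree-$0$ compactified Jacobian $\overline{J}_C$ over $U$.

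Next I would observe that $\sigma_1 \circ \mathrm{AJ} = \mathrm{AJ}_0 \circ \sigma_\CC$, where $\mathrm{AJ}_0 \colon C \to \overline{J}_C$ is the normalized degree-$0$ Abel--Jacobi morphism over $U$; this is immediate from the moduli descriptions of $\sigma_1$ and of the two Abel--Jacobi maps (and a map from $\CC$ to the scheme $\overline{J}_C$ automatically factors through $\sigma_\CC$). Pulling back the first isomorphism above along $\mathrm{AJ} \times_B \mathrm{id}_{\overline{\CJ}^d_C}$, applying \cite[Proposition~2.1(b)]{ADM} to $(\mathrm{AJ}_0 \times_U \mathrm{id}_{\overline{J}_C})^*\CP$, and comparing with the pullback of the second isomorphism, the factors $(\sigma_\CC \times_U \sigma_d)^*\CF$ and the two copies of the pullback of $\CL_d$ cancel, and the proposition over $U$ comes down to an identity of line bundles on $\CC$, up to a line bundle pulled back from $U$:
\[
\CN \simeq \sigma_\CC^*\CN' \otimes \sigma_\CC^*\CO_C(-dU) \otimes \mathrm{AJ}^*\CL_1^{\otimes d},
\]
where $\CN'$ is the twist on $C$ from \cite[Proposition~2.1(b)]{ADM}, supported on a $\BQ$-divisor proportional to the section. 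To finish, I would unwind the $D$-normalization of $\CF^1$ through Proposition \ref{prop4.2} so as to express $\mathrm{AJ}^*\CL_1$ as $\CO_\CC$ of a $\BQ$-divisor combining $\sigma_\CC^*D$ with the section; since neither side of the proposition depends on the auxiliary section, the section-supported contributions from $\CN'$, from $\CO_C(-dU)$ and from $\mathrm{AJ}^*\CL_1$ are forced to cancel, leaving $\CN$ with defining $\BQ$-divisor proportional to $\sigma_\CC^*D$ and the leftover $U$-twist patching to the desired $\CB$.

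The hard part will be the last two steps --- pinning down $\mathrm{AJ}^*\CL_1$ and verifying that all section-dependent twists cancel. A cleaner alternative, which sidesteps the auxiliary section altogether, is to run the determinant-of-cohomology argument behind \cite[Proposition~2.1(b)]{ADM} directly on $\overline{\CJ}^1_C \times_B \overline{\CJ}^d_C$, carrying the $\mu_r$-weights along; the $D$-normalization of $\CF^1$ then inserts the $\BQ$-divisor proportional to $D$ at once, and the $\tfrac{1}{r}$-coefficients appear because $D$ has degree $r$ over $B$.
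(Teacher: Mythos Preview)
Your main approach via \'etale localization and a choice of section is more circuitous than what the paper does, and the step you yourself flag as hard --- extracting $\mathrm{AJ}^*\CL_1$ from Proposition~\ref{prop4.2} and then arguing that section-supported terms cancel --- is not fully justified. Proposition~\ref{prop4.2} only tells you that $\CL_1$ is fiberwise homologically trivial; it does not hand you an explicit divisor expression for $\CL_1$ (or for $\mathrm{AJ}^*\CL_1$) in terms of $D$ and the section. Your cancellation argument (``neither side depends on the auxiliary section, so section-supported terms are forced to cancel'') is a consistency check rather than a computation: it confirms the answer is section-independent but does not identify the $D$-coefficient of $\CN$.

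The paper's proof is essentially your ``cleaner alternative'', and it avoids the section entirely. It works globally on $\CC \times_B \overline{\CJ}^d_C$: write
\[
(\mathrm{id}_C \times_B \mathrm{AJ})^*\CF^1 \simeq (\mathrm{id}_C \times_B \sigma_\CC)^*\CI_\Delta^\vee \otimes p_\CC^*\CN'
\]
for some line bundle $\CN'$ on $\CC$ (this is just the moduli description of the Abel--Jacobi map), and then the \cite[Proposition~2.1(b)]{ADM} calculation shows that the two sheaves in the statement differ by $p_\CC^*\CN'^{\otimes d}$. Identifying $\CN'$ is now a single concrete step: impose the $D$-trivialization of $\CF^1$ on the displayed identity restricted to $D \times_B \CC$, which reduces to the determinant computation
\[
\det\bigl(p_{C*}(\CI_\Delta^\vee|_{D \times_B C})\bigr) \simeq \CO_C(D) \otimes \CB
\]
on $C \times_B C$, with $\CB$ pulled back from $B$. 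This one line replaces all of your section-dependent bookkeeping and yields the $D$-proportionality directly.
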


\begin{proof}
We consider 
\[
\mathrm{id}_C \times_B \mathrm{AJ}: C\times_B \CC \to C\times_B \overline{\CJ}^1_C.
\]
By the definition of the Abel--Jacobi map, we may write
\begin{equation}\label{eqn60}
(\mathrm{id}_C \times_B \mathrm{AJ})^* \CF^1 \simeq (\mathrm{id}_C \times_B \sigma_\CC)^* \CI^\vee_\Delta \otimes p^*_\CC\CN'
\end{equation}
where $\CN'$ is a line bundle on $\CC$, and $\CI_\Delta$ is the ideal sheaf of the relative diagonal on $C \times_B C$. The calculation in the proof of \cite[Proposition 3.1(ii)]{ADM} shows that the difference between
\[(\mathrm{AJ} \times_B \mathrm{id}_{\overline{\CJ}^d_C})^*\CP_{1,d}, \quad (\sigma_\CC\times_B \mathrm{id}_{\overline{\CJ}^d_C})^*\CF^d
\]
is given by $d$-th power of the line bundle $\CN'$ of (\ref{eqn60}); see (3.9) and the last equation of the proof of \cite[Proposition 3.1(ii)]{ADM}.

It suffices to show that, modulo a line bundle pulled back from $B$, $\CN'$ is given by a divisor proportional to $D$. We calculate $\CN'$ via (\ref{eqn60}). By the condition that $\CF^{1}$ is trivialized along the multisection, the right-hand side of (\ref{eqn60}) is trivialized along $D \subset C$ (of the first factor). The desired statement then follows from a direct calculation
\[
\mathrm{det}\left(p_{C*}(\CI^\vee_\Delta \big{|}_{D \times_B C} )\right) \simeq \CO_C(D) \otimes \CB 
\]
with $\CB$ a line bundle pulled back from the base $B$. This completes the proof.
\end{proof}

\begin{rmk}\label{rmk4.7}
In the case when there is a (non-canonical) universal family $F^d$ of rank $1$ degree~$d$ torsion-free sheaves on $C\times_B\overline{J}^d_C$, as in Proposition \ref{prop4.2} there is a $B$-morphism 
\[
\widetilde{\sigma}_d: \overline{\CJ}^d_C \to \overline{J}^d_C 
\]
so that
\[
(\mathrm{id}_C\times_B \widetilde{\sigma}_d)^* F^d \simeq \CF^d \otimes p_{\CJ}^* \CT, \quad \CT \in \mathrm{Pic}( \overline{\CJ}^d_C ).
\]
Combining with Proposition \ref{Prop4.6}, we may relate the Chern characters of $F^d$ and $\CP_{1,d}$. This will be applied in Section \ref{applications} connecting the Fourier transforms and the tautological classes associated with moduli of $1$-dimensional sheaves or Higgs bundles.
\end{rmk}

\section{Moduli of one-dimensional sheaves on surfaces} \label{applications}

\subsection{Overview}
This section concerns applications of our main results. After a general discussion about curves on surfaces and tautological classes, we deduce half of the $P=C$ conjecture for $\BP^2$ (Theorem \ref{thm0.4}), as well as the $P = W$ conjecture of $\mathrm{GL}_r$ (Theorem \ref{P=W!}). From now on, all Chern characters (resp.~cycle classes) take value in cohomology (resp.~Borel--Moore homology).

\subsection{Curves on a surface}\label{Sec5.2} 
Let $S$ be a nonsingular projective surface, and let $d$ be an integer. We assume that $C \to B$ is a flat family of integral curves lying in $S$, with the evaluation map
\[
\mathrm{ev}: C \to S, \quad (x\in C_b \subset C) \mapsto (x \in C_b \subset S).
\]
Assume $H\subset S$ is a divisor which does not contain any curve $C_b$ in the family. Then $H\subset S$ yields a multisection 
\[
D := \mathrm{ev}^{-1}(H) \subset C \to B
\]
which we fix from now on.

As in Sections~\ref{comp_jac} and \ref{Sec:twist}, we denote by $\pi_d: \overline{J}^d_C \to B$ the associated compactified Jacobian, and we further assume that both~$C$ and $\overline{J}_C^d$ are nonsingular. There are the (stacky) Abel--Jacobi~maps
\[
\mathrm{AJ}: C \to \overline{J}^1_C, \quad \mathrm{AJ}: \CC  \to \overline{\CJ}^1_C.
\]

We consider the closed embeddings
\begin{equation}\label{ev/AJ}
\overline{\mathrm{ev}}: = {\mathrm{ev}}\times_B \mathrm{id}_{\overline{J}^d_C}: C \times_B \overline{J}^d_C \to S \times \overline{J}^d_C, \quad \overline{\mathrm{AJ}}:= {\mathrm{AJ}}\times_B \mathrm{id}_{\overline{\CJ}^d_C}: \CC \times_B\overline{\CJ}^d_C \to \overline{\CJ}^1_C \times_B\overline{\CJ}^d_C.
\end{equation} 
Assume there is a universal sheaf
\[
F^d \rightsquigarrow C\times_B \overline{J}^d_C.
\]
It gives a family of $1$-dimensional sheaves on $S$: 
\begin{equation}\label{univ_F}
\overline{F}^d:= \overline{\mathrm{ev}}_* F^d \rightsquigarrow S \times \overline{J}^d_C.
\end{equation}
For our purpose, we want to express the Chern character $\mathrm{ch}(\overline{F}^d)$ in terms of the homological Fourier transform
\[
\mathfrak{F} = \sum_i \mathfrak{F}_i \in H_*^{\mathrm{BM}}(\overline{\CJ}^1_C \times_B \overline{\CJ}^d_C, \BQ), \quad \mathfrak{F}_i \in H_{2(\dim B+ 2g - i)}^{\mathrm{BM}}(\overline{\CJ}^1_C \times_B \overline{\CJ}^d_C, \BQ).
\]
This would allow us to understand the tautological classes associated with (\ref{univ_F}) in terms of the Fourier transform.

Combining Proposition \ref{Prop4.6} and Remark \ref{rmk4.7}, we may compare the sheaves 
\begin{equation}\label{two_sheaves}
F^d \rightsquigarrow C\times_B \overline{J}^d_C, \quad \overline{\mathrm{AJ}}^*\CP_{1,d} \rightsquigarrow \CC \times_B \overline{\CJ}^d_C
\end{equation}
via the maps 
\[
\CC \times_B \overline{\CJ}^d_C 
\xrightarrow{~~\sigma_\CC \times_B \mathrm{id}_{\overline{\CJ}^d_C}~~} C\times_B \overline{\CJ}^d_C \xrightarrow{~~\mathrm{id}_C \times_B\widetilde{\sigma}_d~~} C\times_B \overline{J}^d_C.
\]
More precisely, the pullback of $\overline{F}^d$ to $\CC \times_B \overline{\CJ}^d_C$ coincides with $(\mathrm{AJ}\times_B\mathrm{id}_\CJ)^*\CP_{1,d}$ up to a line bundle pulled back from $\overline{\CJ}^d_C$, and a line bundle pulled back from $\CC$ given by a divisor proportional to $D$. We identify the rational Borel--Moore homology groups using the maps~above:
\[
H^\mathrm{BM}_*(\CC \times_B \overline{\CJ}^d_C, \BQ)= {H}^{\mathrm{BM}}_*(C\times_B \overline{J}^d_C, \BQ), 
\]
which allows us to view 
\[
\overline{\mathrm{AJ}}^* \FF \in H^\mathrm{BM}_*(C\times_B \overline{J}^d_C, \BQ).
\]

We also need to treat the Todd class. Let $l_S \in H^2(S, \BQ)$ be the class of a curve $C_b \subset C$. The normal bundle of the closed embedding $C \hookrightarrow S \times B$ has first Chern class $p_S^* l_S+ p_B^*l_B$. Here $p_{S}, p_B$ are the natural projections from $S\times B$, and $l_B$ is a class in $H^2(B, \BQ)$. Since $\overline{\mathrm{ev}}$ is obtained as the base change of $C \hookrightarrow S \times B$, the Todd class with respect to $\overline{\mathrm{ev}}$ is given by
\begin{equation}\label{todd}
\mathrm{td}(T_{\overline{\mathrm{ev}}}) = \frac{\overline{\mathrm{ev}}^*{l_0}}{1-e^{-\overline{\mathrm{ev}}^*{l_0}}}= 1+ \frac{1}{2}\overline{\mathrm{ev}}^*{l_0}+ \cdots, \quad {l_0} := p_S^* l_S + p_J^*\pi_d^* l_B \in H^2(S\times \overline{J}_C^d, \BQ).
\end{equation}

Now we can state the connection between the Chern character of (\ref{univ_F}) and $\FF$.

\begin{prop}\label{prop3.1} Consider the morphism $\overline{\pi}_d:= \mathrm{id}_S \times \pi_d: S \times \overline{J}^d_C \to S\times B$.
\begin{enumerate}
    \item[(i)]  We have
\[
\mathrm{ch}(\overline{F}^d) = \left( \overline{\mathrm{ev}}_* \left(  e^{\lambda\cdot p^*_{C}D} \cap  \overline{\mathrm{AJ}}^* \mathfrak{F}\right)\right) \cup  \left(\frac{{l_0}}{1- e^{-{l_0}}} \right) \cup e^{p_J^*l_J} \in H^*(S\times \overline{J}^d_C, \BQ).
\]
Here $\lambda \in \BQ$ is a constant, and $l_J \in H^2(\overline{J}^d_C, \BQ)$.
\item[(ii)] Let $P_\bullet H^*(S\times \overline{J}_C^d, \BQ)$ be the perverse filtration associated with $\overline{\pi}_d$.\footnote{This perverse filtration is also multiplicative with respect to the cup-product by Theorem \ref{thm0.1}(i).} For any class $\beta \in H^*(C, \BQ)$, we have
\[
\overline{\mathrm{ev}}_*\left( 
p_C^*\beta \cap
\overline{\mathrm{AJ}}^* \mathfrak{F}_k   \right)\in P_kH^{\geq 2k+2}(S\times \overline{J}_C^d, \BQ).
\]
\end{enumerate}
\end{prop}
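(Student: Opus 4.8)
The plan is to reduce both parts to statements already established in the paper. For part (i), I would first use Proposition \ref{Prop4.6} together with Remark \ref{rmk4.7} to identify $\overline{\mathrm{AJ}}^*\CP_{1,d}$ with $(\sigma_\CC \times_B \mathrm{id})^*F^d$ up to a line bundle on $\CC$ given by a $\BQ$-divisor proportional to $D$ and a line bundle pulled back from the base $B$; pulling back to $\CC \times_B \overline{\CJ}^d_C$ and using that rational Borel--Moore homology is insensitive to the $\mu_r$-gerbes, this says $\mathrm{ch}(F^d) = e^{\lambda\cdot p_C^*D}\cup e^{p_\CJ^*\gamma}\cup \overline{\mathrm{AJ}}^*\mathrm{ch}(\CP_{1,d})$ for some $\lambda\in\BQ$ and $\gamma\in H^2(\overline{\CJ}^d_C,\BQ)$ (descending to $l_J$). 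Next I would push forward to $S\times\overline{J}^d_C$ along $\overline{\mathrm{ev}}$ using Grothendieck--Riemann--Roch: since $\overline{\mathrm{ev}}$ is a closed embedding obtained by base change from $C\hookrightarrow S\times B$, GRR gives $\mathrm{ch}(\overline{\mathrm{ev}}_*F^d) = \overline{\mathrm{ev}}_*\!\big(\mathrm{ch}(F^d)\cup\mathrm{td}(T_{\overline{\mathrm{ev}}})\big)\cdot\mathrm{td}(T_{S\times\overline{J}^d_C})^{-1}$, but a cleaner route is to apply GRR relative to $S\times B$ and use that $\overline{\mathrm{ev}}$ is the base change of a fixed embedding, so the Todd contribution is exactly $\mathrm{td}(T_{\overline{\mathrm{ev}}}) = l_0/(1-e^{-l_0})$ pulled back appropriately as in \eqref{todd}; the projection formula then moves the $l_0/(1-e^{-l_0})$ and $e^{p_J^*l_J}$ factors out of $\overline{\mathrm{ev}}_*$, yielding the stated formula. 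One must be slightly careful that $\overline{\mathrm{AJ}}^*\mathfrak F$ is defined via $\tau(\CP_{1,d})$ rather than the naive Chern character, but by the definition \eqref{eq:defFF} of $\mathfrak F$ the Todd correction there is precisely $\td(-T_{\overline{\CJ}^1_C\times_B\overline{\CJ}^d_C})$, and the mismatch between this and the Todd class of $\overline{\mathrm{ev}}$ is accounted for by the relative-over-$S\times B$ bookkeeping—this is the one genuinely fiddly point.

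For part (ii), I would argue as follows. By part (i), for $\beta\in H^*(C,\BQ)$ the class $\overline{\mathrm{ev}}_*(p_C^*\beta\cap\overline{\mathrm{AJ}}^*\mathfrak F_k)$ differs from a sum of terms of the form $(\text{something in }P_\bullet)\cup(\text{classes pulled back from }S\times B)$ only by the $l_0/(1-e^{-l_0})$ and $e^{p_J^*l_J}$ factors, and $l_0$ is pulled back from $S\times B$ (hence has perversity $0$) while $p_J^*l_J$ has perversity $\le 1$ but—more to the point—the clean statement is about $\mathfrak F_k$ alone, so I would instead argue directly: $\overline{\mathrm{AJ}}^*\mathfrak F_k$ is the pullback under a morphism over $B$ of the degree-$k$ piece of the Fourier transform, which by Theorem \ref{thm:main}(iv) (Perverse $\supset$ Chern for $\pi_1: \overline{J}^1_C\to B$, available via Corollary \ref{cor:main}) lies in $P_k H^*(\overline{\CJ}^1_C\times_B\overline{\CJ}^d_C,\BQ)$ with respect to the perverse filtration of the product fibration over $B$. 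Pullback along the proper-over-$B$ map $\overline{\mathrm{AJ}}$ is compatible with perverse filtrations (pullback along a $B$-morphism decreases or preserves perversity), so $p_C^*\beta\cap\overline{\mathrm{AJ}}^*\mathfrak F_k\in P_k H^*(\CC\times_B\overline{\CJ}^d_C,\BQ)$—here I use that $p_C^*\beta$ is pulled back from $C$, which maps to $B$, so cupping with it preserves the perverse filtration up to the shift by $\deg\beta$ in cohomological degree but not in perversity, combined with multiplicativity of $\S\ref{Sec4.4}$. Finally, the proper pushforward $\overline{\mathrm{ev}}_*$ along a $B$-morphism is compatible with perverse filtrations, so the image lands in $P_k H^*(S\times\overline{J}^d_C,\BQ)$; the cohomological degree bound $\ge 2k+2$ comes from tracking the codimension shift in $\overline{\mathrm{AJ}}^*\mathfrak F_k$ (which lives in $H^{\ge 2k}$ after the index shift in $\mathfrak F_i\in H_{2(\dim B+2g-i)}^{\mathrm{BM}}$) plus the degree of $\overline{\mathrm{ev}}_*$, which raises cohomological degree by $2\cdot\mathrm{codim}=2$.

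The main obstacle I anticipate is the careful bookkeeping of Todd classes and degree shifts in part (i)—specifically, verifying that the Riemann--Roch correction built into the definition of $\mathfrak F$ via $\tau(-)$ combines with the Todd class of the closed embedding $\overline{\mathrm{ev}}$ to produce exactly the factor $l_0/(1-e^{-l_0})$ and nothing more, with all gerbe-degree factors $1/r$ (present in Corollary \ref{cor:main}, Step 1) either cancelling or being absorbed into the constants $\lambda, l_J$. The perverse-filtration compatibilities in part (ii) are standard (pushforward and pullback along proper $B$-morphisms are strict, resp.~non-increasing, for perverse filtrations, by the decomposition theorem and adjunction), so once part (i) is set up cleanly part (ii) should follow quickly.
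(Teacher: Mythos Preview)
Your approach to part (i) is essentially the paper's: use Proposition~\ref{Prop4.6} and Remark~\ref{rmk4.7} to compare $\overline{\mathrm{AJ}}^*\CP_{1,d}$ with $F^d$ up to the line-bundle twists, then apply Grothendieck--Riemann--Roch along the l.c.i.\ closed embedding $\overline{\mathrm{ev}}$, whose Todd class is exactly \eqref{todd}. The paper's proof is no more detailed than your sketch; the Todd bookkeeping you worry about is handled simply by noting that $p_C^*D$ is pulled back via $\overline{\mathrm{ev}}$ from $S\times\overline{J}^d_C$, so the exponential factor passes outside the pushforward.

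For part (ii), however, there is a genuine gap. You assert that Theorem~\ref{thm:main}(iv) places $\mathfrak{F}_k$ itself in $P_kH^*(\overline{\CJ}^1_C\times_B\overline{\CJ}^d_C,\BQ)$ for the perverse filtration of the fiber product over $B$. That is not what the theorem says: Perverse~$\supset$~Chern is a statement about the \emph{action} of $\mathfrak{F}_k$ as a correspondence, namely $\mathfrak{F}_k(\alpha)\in P_kH^*(\overline{J}^d_C,\BQ)$ for every $\alpha$, not about the perversity of $\mathfrak{F}_k$ as a Borel--Moore class on the (singular) fiber product. Establishing the latter would require additional input, and your downstream functoriality argument, while plausible in spirit, never gets off the ground without it.

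The paper instead exploits the K\"unneth structure of $S\times\overline{J}^d_C\to S\times B$: to test that a class lies in $P_k$, it suffices to check that $p_{J*}(p_S^*\alpha\cup-)$ lands in $P_kH^*(\overline{J}^d_C,\BQ)$ for every $\alpha\in H^*(S,\BQ)$. A single application of the projection formula rewrites this as
\[
\mathfrak{F}_k\big(\mathrm{AJ}_*(\mathrm{ev}^*\alpha\cup\beta)\big)\in P_kH^*(\overline{J}^d_C,\BQ),
\]
which is precisely the Perverse~$\supset$~Chern statement from Corollary~\ref{cor:main}. This sidesteps any need to discuss perverse filtrations on the fiber product or to track perversity through pullbacks and pushforwards.
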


\begin{proof}
Statement (i) follows from a Grothendieck--Riemann--Roch calculation, where we used that the maps (\ref{ev/AJ}) are l.c.i.~morphisms, and the class $p_C^*D$ is pulled back from $S \times \overline{J}^d_C$ via $\overline{\mathrm{ev}}$. Note that the factor $e^{\lambda\cdot p^*_{C}D}$ comes from $\CN$ in Proposition \ref{Prop4.6}, the factor $\frac{{l_0}}{1- e^{-{l_0}}}$ comes from the Todd class (\ref{todd}), and the factor $e^{p_J^*l_J}$ comes from the line bundle on $\overline{\CJ}^d_C$ arising from the difference of the two sheaves (\ref{two_sheaves}).

Now we prove (ii). The bound for the cohomological degree is clear. It suffices to show that for any class $\alpha \in H^*(S, \BQ)$, we have
\[
p_{J*}\left(p_S^* \alpha \cup  \overline{\mathrm{ev}}_* \left(  
p_C^*\beta \cap
\overline{\mathrm{AJ}}^* \mathfrak{F}_k  \right)\right) \in P_kH^*(\overline{J}_C^d, \BQ).
\]
By the projection formula, this is equivalent to 
\[
\mathfrak{F}_k\left( {\mathrm{AJ}}_* \left(\mathrm{ev}^*\alpha \cup \beta \right) \right) \in P_k H^*(\overline{J}_C^d, \BQ),
\]
which is given by Corollary \ref{cor:main} (the Perverse $\supset$ Chern part).
\end{proof}

Using Proposition \ref{prop3.1}(i), the tautological classes that appear in Hitchin systems \cite{dCHM1}, moduli of $1$-dimensional sheaves on $\BP^2$ \cite{KPS,PS}, and compactified Jacobians \cite{OY} are all governed by the Fourier transform $\mathfrak{F}$; moreover, their interactions with the perverse filtrations are governed by (ii). We discuss some applications in the following sections.

\subsection{The $P=C$ conjecture for \texorpdfstring{$\BP^2$}{P^2}} \label{Sec5.3}

We prove in this section Theorem \ref{thm0.4}. Assume $r\geq 3$.

\subsubsection{The locus of integral curves} For two coprime integers $r,\chi$, recall the moduli space $M_{r,\chi}$ parameterizing stable $1$-dimensional sheaves $F$ with 
\[
[\mathrm{supp}(F)] =rH, \quad \chi(F) = \chi.
\]
The stability is with respect to the slope
\[
\mu(F) = \frac{\chi(F)}{H\cdot [\mathrm{supp}(F)]}.
\]
The moduli space $M_{r,\chi}$ admits a natural Hilbert--Chow map
\[
h: M_{r,\chi} \to \BP H^0(\BP^2, \CO_{\BP^2}(r))
\]
sending $F$ to its Fitting support. For a point in $\BP H^0(\BP^2, \CO_{\BP^2}(r))$ represented by a nonsingular degree $r$ plane curve, its preimage with respect to $h$ is isomorphic to its Jacobian. Therefore, $h$ can be viewed as a degenerating family of abelian varieties.

Let $W \subset \BP H^0(\BP^2, \CO_{\BP^2}(r))$ be the open subset parameterizing degree $r$ integral curves.

\begin{lem}\label{lem3.2}
We have
\[
\mathrm{codim}_{\BP H^0(\BP^2, \CO_{\BP^2}(r))} (\BP H^0(\BP^2, \CO_{\BP^2}(r))\backslash W) = r-1.
\]
\end{lem}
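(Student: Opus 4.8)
The plan is to describe $\BP H^0(\BP^2, \CO_{\BP^2}(r)) \setminus W$ as a finite union of images of multiplication maps and then carry out an elementary dimension count. First I would observe that $\BP H^0(\BP^2, \CO_{\BP^2}(r)) \setminus W$ is exactly the locus of degree $r$ plane curves which are reducible or non-reduced; any such curve has a defining polynomial admitting a factorization $F = GH$ with $1 \leq \deg G \leq \deg H$, so
\[
\BP H^0(\BP^2, \CO_{\BP^2}(r)) \setminus W = \bigcup_{a = 1}^{\lfloor r/2 \rfloor} \mathrm{Im}(\mu_a), \qquad \mu_a \colon \BP H^0(\BP^2, \CO_{\BP^2}(a)) \times \BP H^0(\BP^2, \CO_{\BP^2}(r - a)) \longrightarrow \BP H^0(\BP^2, \CO_{\BP^2}(r)),
\]
where $\mu_a$ is induced by polynomial multiplication.

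For the inequality $\mathrm{codim} \geq r - 1$ I would bound
\[
\dim \mathrm{Im}(\mu_a) \leq \dim\Big(\BP H^0(\BP^2, \CO_{\BP^2}(a)) \times \BP H^0(\BP^2, \CO_{\BP^2}(r - a))\Big) = \frac{r^2 + 3r}{2} - a(r - a),
\]
and use $\dim \BP H^0(\BP^2, \CO_{\BP^2}(r)) = \tfrac{r^2 + 3r}{2}$. Since $a(r - a) \geq r - 1$ for $1 \leq a \leq r - 1$, with equality only at $a = 1$ and $a = r - 1$, each stratum $\mathrm{Im}(\mu_a)$ has codimension $\geq r - 1$, with equality possible only for $a = 1$.

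For the reverse inequality I would exhibit one stratum of codimension exactly $r - 1$: inside $\mathrm{Im}(\mu_1)$, the image $Z$ of the open set of pairs $(L, C')$ with $L$ a line and $C'$ an \emph{integral} curve of degree $r - 1$ is non-empty for $r \geq 3$ (take $C'$ smooth) and irreducible, and $\mu_1$ is generically injective on it since the linear component of $L \cup C'$ is uniquely determined when $\deg C' = r - 1 > 1$. Hence $\dim Z = \dim (\BP^2)^\vee + \dim \BP H^0(\BP^2, \CO_{\BP^2}(r - 1)) = 2 + \tfrac{(r-1)^2 + 3(r-1)}{2} = \tfrac{r^2 + r + 2}{2}$, which has codimension $r - 1$, and combining the two inequalities proves the lemma. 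The computation is routine; the only mild subtlety is to verify that the generic fibers of $\mu_1$ over $Z$ are zero-dimensional so that $\dim Z$ is computed exactly rather than merely bounded, which follows from the uniqueness just mentioned, so I do not anticipate any genuine obstacle.
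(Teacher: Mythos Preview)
Your proof is correct and follows essentially the same approach as the paper's; the paper treats the statement as well-known and records only the key observation that the stratum of curves splitting as a line plus a degree $r-1$ curve has the largest dimension among boundary components, while you supply the explicit dimension count and the generic injectivity of $\mu_1$ that justify this.
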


\begin{proof}
This is well-known: the component given by the closure of reducible curves with two components of bidegrees $(1,r-1)$ has the largest dimension among all the boundary components.
\end{proof}

Now we consider the restriction of $h$ to the locus of integral curves
\[
h_W: M_W: = h^{-1}(W) \to W.
\]

\begin{cor}\label{cor3.3}
The restriction map induces an isomorphism of filtered vector spaces
\[
\mathrm{res}_W: P_kH^{\leq 2(r-2)}(M_{r,\chi}, \BQ) \xrightarrow{~~\simeq~~} P_kH^{\leq 2(r-2)}(M_W, \BQ).
\]
\end{cor}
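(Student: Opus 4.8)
The plan is to deduce the corollary from Lemma \ref{lem3.2} together with the standard behavior of perverse filtrations under restriction to an open subset. First I would recall that $h: M_{r,\chi} \to \BP H^0(\BP^2,\CO_{\BP^2}(r))$ is a proper map between nonsingular varieties with equidimensional fibers (all fibers are compactified Jacobians of degree $r$ plane curves, hence of dimension $\binom{r-1}{2}$), so the decomposition theorem applies and the perverse filtration $P_\bullet H^*(M_{r,\chi},\BQ)$ is defined. Writing $Z := \BP H^0(\BP^2,\CO_{\BP^2}(r)) \setminus W$, Lemma \ref{lem3.2} gives $\operatorname{codim} Z = r-1$, and since the fibers of $h$ over $Z$ have dimension at most $\binom{r-1}{2}$, the preimage $h^{-1}(Z) = M_{r,\chi}\setminus M_W$ has codimension $\geq r-1$ in $M_{r,\chi}$, i.e.\ dimension $\leq \dim M_{r,\chi} - (r-1)$.

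The key step is then a weight/support argument. Consider the open inclusion $j: M_W \hookrightarrow M_{r,\chi}$ and the closed complement $i: M_{r,\chi}\setminus M_W \hookrightarrow M_{r,\chi}$. The long exact sequence in (Borel--Moore/ordinary) cohomology, or equivalently the excision triangle, shows that the restriction $\operatorname{res}_W: H^d(M_{r,\chi},\BQ) \to H^d(M_W,\BQ)$ is an isomorphism for $d < 2(r-1) - 1$ and injective for $d = 2(r-1)-1$; in particular it is an isomorphism in degrees $d \leq 2(r-2)$, since $2(r-2) = 2r-4 < 2r-3 = 2(r-1)-1$. This handles the underlying vector spaces. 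To upgrade this to an isomorphism of \emph{filtered} vector spaces, I would argue that the perverse filtration on $M_W$ defined via $h_W: M_W \to W$ agrees with the restriction of the perverse filtration on $M_{r,\chi}$: indeed $h_W = h|_{M_W}$ and $W$ is open in $\BP H^0(\BP^2,\CO_{\BP^2}(r))$, and perverse truncation commutes with restriction to an open subset, so ${}^{\mathfrak p}\tau_{\leq k+\dim B}(h_* \BQ)|_W \simeq {}^{\mathfrak p}\tau_{\leq k+\dim B}(h_{W*}\BQ_{M_W})$; taking hypercohomology and using that $H^d(W, -) \to H^d(\BP H^0,-)$ interacts with the restriction maps compatibly, $\operatorname{res}_W$ sends $P_k H^d(M_{r,\chi},\BQ)$ into $P_k H^d(M_W,\BQ)$, and in the range $d\leq 2(r-2)$ it is an isomorphism on both the ambient space and (by the same codimension estimate applied to each perverse truncation, whose hypercohomology still sees only the open part in low degrees) on each filtered piece. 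Hence $\operatorname{res}_W$ is a filtered isomorphism in the stated range.

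The main obstacle is the last point: verifying that the restriction map is not merely a linear isomorphism in low degrees but genuinely an \emph{isomorphism of filtered vector spaces}, i.e.\ that it carries $P_k H^{\leq 2(r-2)}(M_{r,\chi})$ \emph{onto} $P_k H^{\leq 2(r-2)}(M_W)$ and not just into it. To nail this I would run the codimension estimate at the level of the perverse pieces rather than the total cohomology: for each $k$, the cone of ${}^{\mathfrak p}\tau_{\leq k+\dim B} h_*\BQ_{M_{r,\chi}} \to Rj_* {}^{\mathfrak p}\tau_{\leq k+\dim B} h_{W*}\BQ_{M_W}$ is supported on $Z$ (of codimension $r-1$) and is concentrated in a range of perverse degrees that, after taking hypercohomology over the base of dimension $\dim \BP H^0 = \binom{r+2}{2}-1$, contributes only in cohomological degrees $\geq 2(r-1)-1 > 2(r-2)$; this forces $\operatorname{res}_W$ to be a filtered isomorphism through degree $2(r-2)$. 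The rest is a routine diagram chase with the five lemma applied degree by degree to the filtration, which I would not spell out in detail.
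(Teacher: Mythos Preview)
Your argument is correct and follows essentially the same route as the paper: both use the excision/localization triangle together with the codimension bound from Lemma~\ref{lem3.2} to show that $i_*i^!\BQ$ lives in (ordinary and perverse) degrees $\geq 2(r-1)$, which forces the restriction map to be a filtered isomorphism in degrees $\leq 2(r-2)$. The only point where you are slightly vaguer than necessary is the last step---the cleanest way to bound the hypercohomology of the cone on each perverse truncation is to note (via the decomposition theorem) that ${^\mathfrak p}\tau_{\leq k+\dim B}h_*\BQ$ is a direct summand of $h_*\BQ$, so the bound $i_B^!h_*\BQ\in D^{\geq 2(r-1)}$ passes directly to each truncation; the paper's proof is equally terse on this point.
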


\begin{proof}
Assume $i:Z:= M_{r,\chi} \setminus M_W \hookrightarrow M_{r,\chi}$ and $j: M_W \hookrightarrow M_{r,\chi}$ are the natural closed and open embeddings. We consider the exact triangle
\[
i_*i^!\BQ_{M_{r,\chi}} \to \BQ_{M_{r,\chi}} \to j_*j^*\BQ_{M_{r,\chi}} \to i_*i^!\BQ_{M_{r,\chi}}[1]
\]
which further yields
\begin{equation}\label{3.3_1}
h_*i_*i^!\BQ_{M_{r,\chi}} \to h_*\BQ_{M_{r,\chi}} \xrightarrow{~~(*)~~} j_*h_{W*}\BQ_{M_W} \to h_*i_*i^!\BQ_{M_{r,\chi}}[1].
\end{equation}
The filtered morphism $\mathrm{res}_W$ is induced by taking the global cohomology of $(*)$.
By Lemma~\ref{lem3.2}, we have 
\[
i_*i^!\BQ_{M_{r,\chi}} \in D_c^{\geq 2(r-1)}(M_{r,\chi});
\]
in particular, both the first and the last terms of (\ref{3.3_1}) are concentrated in degrees $> 2(r-2)$. Therefore the restriction map 
\[
\mathrm{res}_W: H^{\leq 2(r-2)}(M_{r,\chi}, \BQ) \to H^{\leq 2(r-2)}(M_{r,\chi}, \BQ)
\]
is an isomorphism that preserves the perverse filtrations:
\[
\mathrm{res}_W \left(P_k H^{\leq 2(r-2)}(M_{r,\chi}, \BQ)\right) \subset P_kH^{\leq 2(r-2)}(M_{r,\chi}, \BQ).
\]
Furthermore, this inclusion has to be an isomorphism due to \cite[Lemma 3.3]{dCMS}. The corollary follows.
\end{proof}

\subsubsection{Tautological classes and $P=C$}\label{5.3.2}

We first review some structural results on the cohomology of $M_{r,\chi}$ and the $P=C$ conjecture of \cite{KPS}.

As in \cite{dCMS, KPS}, we consider twisted Chern characters. We fix $\BF$ to be a universal family over $\BP^2 \times M_{r,\chi}$. For a class 
\begin{equation*}\label{delta}
\delta \in p_{\BP^2}^*H^2(\BP^2, \BQ)\oplus p_M^*H^2(M_{r,\chi}, \BQ) \subset H^2(\BP^2\times M_{r,\chi}, \BQ), 
\end{equation*}
we define 
\[
\mathrm{ch}^\delta(\BF): = \mathrm{ch}(\BF) \cup e^{\delta} \in H^*(\BP^2 \times M_{r,\chi}, \BQ)
\]
as well as its degree $k$ part $\mathrm{ch}_k^\delta(\BF) \in H^{2k}(\BP^2 \times M_{r,\chi}, \BQ)$. This further induces the $\delta$-twisted tautological class
\[
c_k^\delta(j): = p_{M*}\left(p^*_{\BP^2} H^j \cup \mathrm{ch}_{k+1}^\delta(\BF) \right) \in H^{2(k + j - 1)}(M_{r,\chi}, \BQ).
\]
The reason for introducing the $\delta$-twisted tautological classes is the following. The universal family $\BF$ is not canonical; however, by \cite[Proposition 1.2]{KPS} there exists a unique $\delta_0$ as above such that $c^{\delta_0}_k(j)$ has perversity strictly equal to $k$ when $k+j\leq 2$. Then all the classes $c^{\delta_0}_k(j)$ are canonically defined which do not depend on the choice of $\BF$. We denote
\[
c_k(j) : = c_k^{\delta_0}(j).
\]
It was proven in \cite{PS} that the first $3r-7$ classes of cohomological degrees $\leq 2(r - 2)$, \emph{i.e.},
\[
c_k(j) \in H^{2(k+j-1)}(M_{r,\chi}, \BQ), \quad k+j \leq r-1
\]
generate $H^*(M_{r,\chi}, \BQ)$ as a $\BQ$-algebra, and there is no relation in degrees $\leq 2(r-2)$. Therefore the Chern filtration of Section \ref{0.3.2} is well-defined for $H^{\leq 2(r-2)}(M_{r,\chi}, \BQ)$. 

\begin{rmk}
    The bound $2(r-2)$ coincides with the bound obtained in Corollary \ref{cor3.3}. It is optimal in view of the free generation result \cite{PS}. However, this fact is crucial for our proof, but we do not have an easy geometric explanation of this coincidence. This bound is also optimal for the $P=C$ conjecture below; see \cite[Remark 0.5]{KPS}.
\end{rmk}

The main conjecture of \cite{KPS} connects the Chern filtration above with the perverse filtration associated with the map $h: M_{r,\chi} \to \BP H^0(\BP^2, \CO_{\BP^2}(r))$.

\begin{conj}[\cite{KPS} ``$P=C$'']\label{conj3.4}
We have
\[
P_kH^{\leq {2(r-2)}}(M_{r,\chi}, \BQ) = C_kH^{\leq 2(r-2)}(M_{r,\chi}, \BQ).
\]
\end{conj}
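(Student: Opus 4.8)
The plan is to prove the two inclusions separately, so that the equality in Conjecture \ref{conj3.4} splits into ``$\supset$'' (which is Theorem \ref{thm0.4}) and ``$\subset$'' (which I will reduce to a numerical comparison). For the inclusion $C_kH^{\leq 2(r-2)}(M_{r,\chi},\BQ) \subset P_kH^{\leq 2(r-2)}(M_{r,\chi},\BQ)$, the strategy is to pass to the integral locus via Corollary \ref{cor3.3}, which makes $\mathrm{res}_W$ a filtered isomorphism in the relevant range, and then to exploit that $h_W\colon M_W \to W$ is (a twisted form of) the compactified Jacobian fibration of the universal family of degree-$r$ integral plane curves. This fibration is a dualizable abelian fibration satisfying (FV) by Theorem \ref{thm0.2}, so Corollary \ref{cor:main} applies: Proposition \ref{prop3.1} identifies the twisted tautological classes $c_k(j)$ with components of the Fourier transform $\FF$, the Perverse $\supset$ Chern statement puts each $c_k(j)$ in perversity $\leq k$, and multiplicativity (Theorem \ref{thm0.1}(i)) handles products of generators. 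This is exactly the content of Theorem \ref{thm0.4}.

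For the reverse inclusion, the key observation is that, once $C_k \subset P_k$ is known, ``$P=C$'' is equivalent to the numerical identity
\[
\dim \mathrm{Gr}^P_i H^{i+j}(M_{r,\chi},\BQ) = \dim \mathrm{Gr}^C_i H^{i+j}(M_{r,\chi},\BQ), \qquad i+j \leq 2(r-2),
\]
i.e.\ to \cite[Conjecture 0.1]{KPS}. The right-hand side is elementary: by the free generation theorem of Pi--Shen \cite{PS}, in the window $i+j\le 2(r-2)$ the bigraded algebra $\bigoplus \mathrm{Gr}^C_i H^{i+j}$ is a free polynomial algebra on the generators $c_k(j)$ with $k+j\le r-1$, placed in Chern degree $k$ and cohomological degree $k+j$, so $\dim \mathrm{Gr}^C_i H^{i+j}$ is a single explicit coefficient of a universal generating series. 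The left-hand side is governed by enumerative geometry: by the decomposition theorem and the discussion in \cite{KPS}, $\dim \mathrm{Gr}^P_i H^{i+j}$ computes the refined BPS invariant of $\mathrm{Tot}(K_{\BP^2})$ in curve class $rH$, accessible through Pandharipande--Thomas theory. So I would compute both generating functions and match their coefficients in the stated range.

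The hard part is precisely this last matching. On the Chern side the answer is a clean product formula; on the perverse side one needs a sufficiently explicit description of the low-cohomological-degree refined BPS invariants of local $\BP^2$, together with a stabilization statement bounding the contributions that can appear in cohomological degrees $\leq 2(r-2)$, and then an honest comparison of the two series. The Fourier theory of this paper produces only one of the two inclusions; closing the gap seems to require genuinely new input, either (a) extracting the needed BPS numbers from the GW/PT correspondence plus a vanishing argument, or (b) producing a geometric splitting of the perverse filtration by tautological classes --- refining the Maulik--Yun splitting \cite{MY} or Rennemo's decomposition \cite{JR} --- for which the dimension count would be automatic. I would therefore regard step (b), or the enumerative computation in (a), as the main obstacle.
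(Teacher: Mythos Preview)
The statement you were asked to prove is labeled a \emph{conjecture} in the paper and is not proven there. The paper establishes only one direction, namely Theorem~\ref{thm0.4}:
\[
P_k H^{\leq 2(r-2)}(M_{r,\chi}, \BQ) \supset C_k H^{\leq 2(r-2)}(M_{r,\chi}, \BQ),
\]
and then observes (in the paragraph following Conjecture~\ref{conj3.4}) that the full equality is equivalent to the numerical identity \cite[Conjecture~0.1]{KPS}, which remains open.

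Your proposal reflects this situation accurately. Your account of the $\supset$ direction matches the paper's proof of Theorem~\ref{thm0.4} in Section~\ref{sec5.3.3}: restrict to the integral locus via Corollary~\ref{cor3.3}, identify $h_W$ with a (twisted) compactified Jacobian fibration, invoke Corollary~\ref{cor:main} for multiplicativity and Perverse~$\supset$~Chern, and use Proposition~\ref{prop3.1} to bound the perversity of each $c_k(j)$. Your discussion of the reverse inclusion is also correct in spirit: once $C_k \subset P_k$ is known, equality is equivalent to the dimension match, and you correctly flag that neither the Fourier theory of this paper nor the existing splittings of Maulik--Yun or Rennemo are known to yield it. So there is no gap in your analysis; rather, you have correctly recognized that the statement is not a theorem of the paper and that the missing half requires input beyond what the paper provides.
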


Before proceeding with the proof, we first discuss some motivations and consequences of the $P=C$ conjecture. By \cite{HST}, the dimensions of the graded pieces of the perverse filtration
\[
n_r^{i,j}: = \dim \mathrm{Gr}^P_iH^{i+j}(M_{r,\chi}, \BQ)
\]
calculate the refined BPS invariants for the local Calabi--Yau $3$-fold $\mathrm{Tot}(K_{\BP^2})$. In this case, conjecturally these invariants coincide with the refined BPS invariants calculated from refined Pandharipande--Thomas (PT) invariants. The PT calculation predicts that the invariants $n_{r}^{i,j}$ satisfy a product formula
\[
\sum_{i,j}n_r^{i,j}q^it^j = 
\prod_{i \geq 0} \frac{1}{(1-(qt)^i q^2) (1-(qt)^i q^2t^2) (1-(qt)^i t^2)}
\]
when $i+j \leq 2(r-2)$. This surprising product formula implies that, although every individual Gromov--Witten or BPS invariant for the local $\BP^2$ does not stabilize when $r \to +\infty$, their refinement does.

Conjecture \ref{conj3.4} gives a geometric explanation of this product formula; furthermore, Theorem~\ref{thm0.4} implies that the product formula and Conjecture \ref{conj3.4} are equivalent.

\subsubsection{Proof of Theorem \ref{thm0.4}}\label{sec5.3.3}

Recall the restricted map $h: M_W \to W$ over the locus of integral curves, which can be identified with the compactified Jacobian 
\[
\pi_d: \overline{J}^d_C \to W, \quad  d= \chi -1+ \frac{(r-1)(r-2)}{2}
\]
associated with the family of degree $r$ integral planar curves $C \to W$. A hyperplane section $H \subset \BP^2$ induces a multisection $D$ of degree $r$ over the base $W$. 

By Corollary \ref{cor3.3}, it suffices to show that 
\[
\prod_{i=1}^s c_{k_i}(j_i) \in P_{\Sigma_{i=1}^sk_i} H^*(M_W, \BQ).
\]
Here the perverse filtration is defined via $\pi_d$, and the classes $c_{k_i}(j_i)$ are viewed as the restrictions of the corresponding classes to $\overline{J}^d_C$. Applying Corollary \ref{cor:main} (the multiplicativity part) to~$\pi_d: \overline{J}^d_C \to B$, the statement above is further reduced to treating each individual class
\begin{equation}\label{eqn43}
c_k(j) \in P_kH^*(\overline{J}^d_C, \BQ).
\end{equation}

The universal sheaf $\BF$ on $\BP^2 \times \overline{J}^d_C$ can be expressed as $\overline{F}^d$ of (\ref{univ_F}), so that we have
\[
\mathrm{ch}^{\delta_0}(\BF) = \mathrm{ch}(\overline{F}^d) \cup e^{\delta_0} ,\quad  
 \delta_0 : = p_{\BP^2}^* \delta_{\BP^2} + p_J^* \delta_J.
\]
Hence we may use Proposition \ref{prop3.1} to control the perversity of a tautological class.

We first note that the class $l_0$ associated with the Todd class (\ref{todd}) satisfies
\begin{equation}\label{l0}
l_0 \in P_0H^2(\BP^2 \times \overline{J}_C^d, \BQ) \subset P_1H^2(\BP^2 \times \overline{J}_C^d ,\BQ)
\end{equation}
since its component for $\overline{J}^d_C$ is pulled back from the base $B$.
By definition, we know that the K\"unneth component $H^2(\BP^2, \BQ) \otimes H^2(\overline{J}_C^d, \BQ)$ of $\mathrm{ch}^{\delta_0}_2(\BF)$ has perversity $1$ with respect to the perverse filtration associated with $\BP^2 \times \overline{J}_C^d \to \BP^2 \times B$. Therefore 
\begin{equation}\label{OmegaJ}
\omega_J:= \delta_J +l_J \in P_1H^2(\overline{J}_C^d, \BQ).
\end{equation}
Hence, if we express 
\[
\mathrm{ch}_{k+1}^{\delta_0}(\BF) \in H^{2k+2}( \BP^2 \times \overline{J}_C^d, \BQ )
\]
using Proposition \ref{prop3.1}(i) in terms of $\mathfrak{F}$, $\omega_J$, and $l_0$, we see that each term is of the form
\[
\overline{\mathrm{ev}}_*\left( 
p_C^*\beta_j \cap
\overline{\mathrm{AJ}}^* \mathfrak{F}_j   \right) \cup \gamma_j(\omega_J, l_0), \quad j \leq k.
\]
Here $\gamma_j(\omega_J, l_0)$ admits a polynomial expression in terms of $\omega_J$ and $l_0$. 

Now by Proposition \ref{prop3.1}(ii), the first term satisfies
\[
\overline{\mathrm{ev}}_*\left( 
p_C^*\beta_j \cap
\overline{\mathrm{AJ}}^* \mathfrak{F}_j   \right) \in P_{j}H^{\geq 2j+2}(\BP^2 \times \overline{J}_C^d, \BQ).
\]
By (\ref{l0}), (\ref{OmegaJ}), and the multiplicativity of the perverse filtration for $\BP^2 \times \overline{J}^d_C$, the second term satisfies
\[
\gamma_j(\omega_J, l_0) \in \bigoplus_i P_iH^{2i}( \BP^2\times \overline{J}^d_C, \BQ ).
\]
Using again the multiplicativity, we conclude that 
\[
\mathrm{ch}^{\delta_0}_{k+1}(\BF) \in P_k H^{2k+2}( \BP^2 \times \overline{J}^d_C, \BQ ),
\]
which implies (\ref{eqn43}). This completes the proof of Theorem \ref{thm0.4}.    \qed

\subsection{The $P=W$ conjecture} \label{Sec5.4}
In this section, we prove Theorem \ref{P=W!} via (\ref{P=W_taut}). Ideally, (\ref{P=W_taut}) should follow directly from Theorem \ref{thm0.1} for the Hitchin system, as in the proof of Theorem~\ref{thm0.4}. However, what we know so far is that the Hitchin system is a (twisted) dualizable abelian fibration over the locus of integral spectral curves on the base. Therefore, as in \cite{MS_PW, HMMS}, we reduce (\ref{P=W_taut}) to the parallel statement for certain parabolic moduli spaces. 

We shall use the reduction steps of Hausel--Mellit--Minets--Schiffmann \cite[Section 8]{HMMS} which we review in Section \ref{reduction}.

\subsubsection{Tautological classes and the Chern filtration}\label{sec5.4.1}

We first review briefly the tautological classes and the Chern filtration associated with the Dolbeault moduli space $M_{\mathrm{Dol}}$ of rank $r$ and degree~$n$. The discussion is parallel to Section \ref{5.3.2}.

Under the coprime condition $(r,n)=1$, the moduli space $M_{\mathrm{Dol}}$ admits a (non-canonical) universal bundle
\[
\BU \rightsquigarrow \Sigma \times M_{\mathrm{Dol}}
\]
of rank $r$. For a class 
\begin{equation*}
\delta = p_\Sigma^* \delta_\Sigma +p_M^*\delta_M \in p_{\Sigma}^*H^2(\Sigma, \BQ)\oplus p_M^*H^2(M_{\mathrm{Dol}}, \BQ) \subset H^2(\Sigma \times M_{\mathrm{Dol}}, \BQ), 
\end{equation*}
we define 
\begin{equation}\label{eqn}
\mathrm{ch}^\delta(\BU): = \mathrm{ch}(\BU) \cup e^{\delta} \in H^*(\Sigma \times M_{\mathrm{Dol}}, \BQ)
\end{equation}
as well as its degree $k$ part $\mathrm{ch}_k^\delta(\BU) \in H^{2k}(\Sigma \times M_{\mathrm{Dol}}, \BQ)$. We say that $(\BU, \delta)$ is \emph{normalized} if in the K\"unneth decomposition we have
\[
\mathrm{ch}^\delta_1(\BU) \in H^1(\Sigma, \BQ) \otimes H^1(M_{\mathrm{Dol}}, \BQ) \subset H^2(\Sigma \times M_{\mathrm{Dol}}, \BQ).
\]
The twisted Chern character (\ref{eqn}) is canonical; it does not depend on the choice of a pair $(\BU, \delta)$ as long as it is normalized. 

For any $\gamma \in H^*(\Sigma, \BQ)$ and a normalized pair $(\BU, \delta)$, we define the tautological class
\[
c_k(\gamma): = p_{M*}\left(p^*_{\Sigma} \gamma \cup \mathrm{ch}_{k}^\delta(\BU) \right) \in H^{*}(M_{\mathrm{Dol}}, \BQ).
\]
These classes generate $H^*(M_{\mathrm{Dol}}, \BQ)$ as a $\BQ$-algebra \cite{Markman}. Define the Chern filtration $C_kH^*(M_{\mathrm{Dol}}, \BQ)$ as the subspace of $H^*(M_{\mathrm{Dol}}, \BQ)$ spanned by 
\[
\prod_{i=1}^s c_{k_i}(\gamma_i) \in H^{*}(M_{\mathrm{Dol}}, \BQ), \quad \sum_{i=1}^s k_i \leq k.
\]
We call the integer $\sum_{i=1}^s k_i$ associated with the class above its Chern grading. Note that by considerations from the character variety \cite{Shende}, the ideal of the relations between the tautological classes is homogeneous with respect to the Chern grading. Therefore, unlike the case for $M_{d,\chi}$ associated with $\BP^2$, the Chern grading is well-behaved for the \emph{total} cohomology $H^*(M_{\mathrm{Dol}}, \BQ)$. As was commented in Section \ref{0.3.2}, the main result of \cite{Shende} further showed that the Chern filtration coincides with the weight filtration $W_{2k}H^*(M_B, \BQ)$ for the character variety.

\subsubsection{Moduli spaces}\label{5.4.2}
From now on we only work with the Dolbeault moduli space, therefore we write $M:= M_{\mathrm{Dol}}$ for notational convenience. We fix $r,n$, and only consider Hitchin type moduli spaces of rank $r$ and degree $n$ on the curve $\Sigma$. We also fix a point $p\in \Sigma$. The reduction techniques of \cite[Section 8]{HMMS} require several Hitchin type moduli spaces which we review here.

\begin{enumerate}
    \item[$\bullet$] $M$: the moduli space of stable Higgs bundles 
    \[
    (E, \theta), \quad  \theta: E \to E \otimes \Omega^1_\Sigma.
    \]
    \item[$\bullet$] $M^{\mathrm{mero}}$: the moduli space of stable meromorphic Higgs bundles 
    \[
    (E, \theta), \quad  \theta: E \to E\otimes \Omega^1_\Sigma(p).
    \]
    \item[$\bullet$] $M^{\mathrm{par}}$: the moduli space of stable parabolic Higgs bundles
    \[
    (E, \theta, F^\bullet),\quad \theta: E \to E \otimes \Omega^1_\Sigma(p), \quad 
    \]
    where $F^\bullet$ is a complete flag on the fiber $E_p$, and the residue $\theta_p$ preserves the flag.
    \item[$\bullet$] $M^0 \subset M^{\mathrm{par}}$: the moduli subspace given by the condition that the residue $\theta_p$ is nilpotent.
    \item[$\bullet$] $M^{\mathrm{parell}} \subset M^{\mathrm{par}}$: the moduli subspace given by the condition that the spectral curve associated with the parabolic Higgs bundle is integral and the residue $\theta_p$ has $n$ distinct eigenvalues over $p$.
    \item[$\bullet$] $\widetilde{M} \subset M^0$: the moduli subspace with trivial residue at the point $p$, \emph{i.e.}~$\theta_p=0$.
\end{enumerate}

We summarize the moduli spaces above by the diagram
\begin{equation}\label{Hitchin_moduli}
M \xleftarrow{~~f~~} \widetilde{M} \xhookrightarrow{~~\widetilde{\iota}~~} M^0 \xhookrightarrow{~~\iota_0~~} {M}^{\mathrm{par}} \xhookleftarrow{~~\iota~~} M^{\mathrm{parell}} \xrightarrow{~~q~~} \overline{J}^d_C. 
\end{equation}
Here $\widetilde{\iota}, \iota_0, \iota$ are natural inclusions. The first morphism $f: \widetilde{M} \to M$ is given by forgetting the flag. The symbol $C$ in the last object stands for 
\[
C\to W \subset \bigoplus_{i=1}^r H^0\left(\Sigma, \Omega^1_{\Sigma}(p)^{\otimes i}\right),
\]
the family of integral spectral curves in the surface $\mathrm{Tot}_\Sigma(\Omega^1_\Sigma(p))$ which intersects the fiber over~$p$ in $r$ distinct points. Then $\overline{J}^d_C$ is the open subvariety of $M^{\mathrm{mero}}$ given by the pre-image of $W$ under the Hitchin map; it is isomorphic to the compactified Jacobian fibration associated with $C \to W$ where the degree $d$ is determined by $r, n$ via a Riemann--Roch calculation. Finally, there is a natural $\mathfrak{S}_r$-action on $M^{\mathrm{parell}}$ permuting the complete flag; it is free due to the condition that the eigenvalues of $\theta_p$ are distinct. The last morphism $q: M^{\mathrm{parell}} \to \overline{J}^d_C$ is the quotient map with respect to this free $\mathfrak{S}_r$-action, which forms the Cartesian product
\begin{equation}\label{Cart}
\begin{tikzcd}
M^{\mathrm{parell}} \arrow[r, "q"] \arrow[d, "h"]
& \overline{J}^d_C \arrow[d, " "] \\
\widetilde{W} \arrow[r]
& W.
\end{tikzcd}
\end{equation}
Here $\widetilde{W}$ can be viewed as the parameter space of spectral curves lying in $W$ with a marking over $p\in \Sigma$, whose projection to $W$ is the natural $\mathfrak{S}_r$-quotient. 


\begin{rmk}
For the reader's convenience, we list the notation used in \cite{HMMS} for the moduli spaces above. In the following, the first is the notation of this section, and the second is the notation of \cite[Section 8]{HMMS}: $M$ is $X$, $M^{\mathrm{par}}$ is either $\overline{M}_{r,d}$ or $X^\mathrm{par}$, $M^0$ is $M^0_{r,d}$, and $M^{\mathrm{parell}}$ is either $M^\mathrm{parell}_{r,d,D}$ or $M_{r,d}$.
\end{rmk}

Now for each Hitchin type moduli space above, we can use a universal bundle $\BU$ to define the normalized tautological classes 
\[
c_k(\gamma), \quad k \in \BN, \quad \gamma \in H^*(\Sigma, \BQ)
\]
identical to the definition in Section \ref{sec5.4.1}. Each moduli space above also admits a proper Hitchin map defined in the obvious way by calculating the characteristic polynomial of the Higgs field, from which we may define the corresponding perverse filtrations. 

\begin{cor}\label{cor} Let $c_k(\gamma)$ be the tautological classes on $M^{\mathrm{parell}}$. The operator on the cohomology given by the cup-product with respect to $c_k(\gamma)$ satisfies
\[
c_k(\gamma) \cup {} : P_iH^*(M^{\mathrm{parell}}, \BQ) \to P_{i+k}H^*(M^{\mathrm{parell}}, \BQ).
\]
\end{cor}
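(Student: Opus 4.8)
The goal is to deduce the statement for $M^{\mathrm{parell}}$ from Corollary \ref{cor:main} applied to $\pi_d \colon \overline{J}^d_C \to W$, via the Cartesian square \eqref{Cart}. The basic principle is that the perverse filtration and the tautological classes both pull back along the finite \'etale cover $q \colon M^{\mathrm{parell}} \to \overline{J}^d_C$ (which is the base change of the finite \'etale map $\widetilde{W} \to W$), so the multiplicativity property descends from the base. First I would record that since $h \colon M^{\mathrm{parell}} \to \widetilde{W}$ is the pullback of $\pi_d$ along the finite \'etale map $\widetilde{W} \to W$, Corollary \ref{cor:main} applies verbatim to $h$: in particular, the perverse filtration on $H^*(M^{\mathrm{parell}}, \BQ)$ defined via the Hitchin map $h$ is multiplicative, and the Perverse $\supset$ Chern statement holds for the Fourier transform associated to the pulled-back Poincar\'e sheaf.

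\textbf{Key steps.} Second, I would identify $c_k(\gamma)$ on $M^{\mathrm{parell}}$ with a class of perversity $\leq k$. The cleanest route is to apply Proposition \ref{prop3.1}(ii) in this setting: the universal bundle on $M^{\mathrm{parell}}$ comes from a universal family of sheaves on the surface $\mathrm{Tot}_\Sigma(\Omega^1_\Sigma(p))$, and the tautological class $c_k(\gamma)$ — being obtained by slanting a component of the (twisted) Chern character of that universal family against $\gamma \in H^*(\Sigma, \BQ)$ — is, by the Grothendieck--Riemann--Roch computation of Proposition \ref{prop3.1}(i), a sum of terms $\overline{\mathrm{ev}}_*(p_C^*\beta_j \cap \overline{\mathrm{AJ}}^* \mathfrak{F}_j)$ with $j \leq k$, multiplied by universal (base-pulled-back or perversity-controlled) correction classes. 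Proposition \ref{prop3.1}(ii) then gives that each such term lies in $P_j H^*(M^{\mathrm{parell}}, \BQ) \subset P_k H^*(M^{\mathrm{parell}}, \BQ)$; one must also check, exactly as in the $\BP^2$ argument of Section \ref{sec5.3.3}, that the normalization class $\delta$ making $(\BU, \delta)$ normalized contributes only classes of perversity $0$ in the relevant K\"unneth components (here the residue-flag contributions at $p$ are pulled back from the base or carried by $H^1(\Sigma) \otimes H^1$ terms with the same perversity bound). Hence
\[
c_k(\gamma) \in P_k H^*(M^{\mathrm{parell}}, \BQ).
\]

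\textbf{Conclusion and main obstacle.} Third, the statement of the corollary follows immediately: by the multiplicativity of the perverse filtration on $M^{\mathrm{parell}}$ (from Corollary \ref{cor:main}(iii) for $h$), if $\alpha \in P_i H^*(M^{\mathrm{parell}}, \BQ)$ then $c_k(\gamma) \cup \alpha \in P_{k+i} H^*(M^{\mathrm{parell}}, \BQ)$. I expect the main obstacle to be bookkeeping rather than conceptual: one must carefully match the universal bundle on $M^{\mathrm{parell}}$ (together with the flag data at $p$ and the normalization twist $\delta$) against the Poincar\'e sheaf $\CP_{1,d}$ of the compactified Jacobian fibration $\overline{J}^d_C$, so that Proposition \ref{prop3.1}(i) applies with the correct identification of the surface, the Abel--Jacobi map, and the multisection $D$ coming from the fiber over $p$; and one must verify that the perverse filtration defined on $M^{\mathrm{parell}}$ via its own Hitchin map coincides (under $q$) with the one pulled back from $W$, which is where \'etale descent of perverse sheaves — already invoked in Step 4 of Section \ref{Sec4.4} — is used. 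Once these identifications are in place, everything reduces to the already-established Corollary \ref{cor:main} and Proposition \ref{prop3.1}.
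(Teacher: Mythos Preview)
Your proposal is correct and follows essentially the same route as the paper: reduce to $c_k(\gamma)\in P_kH^*$ via the multiplicativity part of Corollary \ref{cor:main}, then control the perversity of $c_k(\gamma)$ through Proposition \ref{prop3.1} and the Grothendieck--Riemann--Roch argument of Section \ref{sec5.3.3}. The paper streamlines the bookkeeping you flag in two small ways: it first observes that the tautological classes on $M^{\mathrm{parell}}$ are pulled back from $\overline{J}^d_C$ via $q$ (so one may work directly over $W$ rather than $\widetilde{W}$, avoiding any \'etale descent step), and it passes to the projective surface $S=\BP_\Sigma(\Omega^1_\Sigma(p)\oplus\CO_\Sigma)$ rather than the open total space so that the setup of Section \ref{Sec5.2} applies verbatim; it also notes that the normalization condition on $(\BU,\delta)$ forces the class $\omega_J$ of \eqref{OmegaJ} to vanish (rather than merely lie in $P_1$), which cleans up the final estimate.
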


\begin{proof}
Since the left vertical arrow $h$ of (\ref{Cart}) is the pullback of the compactified Jacobian fibration $\overline{J}^d_C$ along a finite \'etale map $\widetilde{W} \to W$, Corollary \ref{cor:main} applies to it. In particular, the perverse filtration $P_\bullet H^*( M^{\mathrm{parell}}, \BQ)$ is multiplicative. So we only need to show that 
\[
c_k(\gamma) \in P_k H^*(M^{\mathrm{parell}}, \BQ).
\]
Moreover, since this class is pulled back from $\overline{J}^d_C$, in view of the diagram (\ref{Cart}) it suffices to prove the corresponding statement for $\overline{J}^d_C$:
\[
c_k(\gamma) \in P_k H^*(\overline{J}^d_C, \BQ).
\]
This is a consequence of Corollary \ref{cor:main} which is completely parallel to the proof of Theorem~\ref{thm0.4} in Section \ref{sec5.3.3}.

The only minor difference is that, in Section \ref{sec5.3.3} we considered a universal $1$-dimensional sheaf over a surface, and here we consider a universal vector bundle over a curve. The latter can be reduced to the surface case as we explain in the following.

Let $S$ be the projective bundle over the curve $\Sigma$ given by
\[
\mathrm{pr}: S:= \BP_\Sigma\left( \Omega^1_\Sigma(p) \oplus \CO_\Sigma\right) \to \Sigma.
\]
Then $C \to W$ can be viewed as a family of curves in the linear system $|r\Sigma|$ with $\Sigma \subset S$ the~$0$-section. A universal sheaf $\overline{F}^d$ on $S\times \overline{J}^d_C$ of Section \ref{Sec5.2} provides a universal bundle
\[
\BU: = (\mathrm{pr} \times \mathrm{id}_J)_* \overline{F}^d \rightsquigarrow \Sigma \times \overline{J}^d_C.
\]
In particular, the Grothendieck--Riemann--Roch formula with respect to 
\[
\overline{\mathrm{pr}}:=\mathrm{pr} \times \mathrm{id}_J: S\times \overline{J}^d_C \to \Sigma \times \overline{J}^d_C
\]
allows us to express the class $c_k(\gamma)$ in terms of the tautological classes associated with $S$ defined via $\overline{F}^d$. More precisely, by the formula
\[
\mathrm{ch}(\BU) = \overline{\mathrm{pr}}_*\left( \mathrm{ch}(\overline{F}^d) \cup p_S^* \mathrm{td}_{\mathrm{pr}}\right),
\]
every tautological class $c_k(\gamma)$ can be written in terms of K\"unneth components of 
\[
\mathrm{ch}^{\overline{\mathrm{pr}}^*\delta}_{j}(\overline{F}^d), \quad j \leq {k+1}.
\]
Hence it suffices to show that 
\[
\mathrm{ch}^{\overline{\mathrm{pr}}^*\delta}_{k + 1}(\overline{F}^d) \in P_{k}H^*(S\times \overline{J}^d_C, \BQ).
\]
This follows from an identical argument as in Section \ref{sec5.3.3} where we note that the normalization condition for $(\BU, \delta)$ ensures that the class (\ref{OmegaJ}) associated with $\overline{F}^d$ vanishes.
\end{proof}

\subsubsection{The reduction steps of Hausel--Mellit--Minets--Schiffmann}\label{reduction}
Finally, we complete the proof of Theorem \ref{P=W!} by reducing it to Corollary \ref{cor}. Clearly (\ref{P=W_taut}) can be deduced from the following analogue of Corollary \ref{cor} for the moduli space $M$:
\begin{equation}\label{P=W_final}
c_k(\gamma) \cup {} : P_iH^*(M, \BQ) \to P_{i+k}H^*(M, \BQ).
\end{equation}
If such a statement holds for a Hitchin type moduli space in Section \ref{5.4.2}, we say that this moduli space satisfies \emph{stronger $P \supset C$}. For the moduli space $M$, the stronger $P \supset C$ condition is equivalent to the weaker version (\ref{P=W_taut}) due to Markman's generation result \cite{Markman}. But for other spaces, these two conditions are not equivalent.

We now explain how the reduction steps of \cite[Section~8]{HMMS} reduces (\ref{P=W_final}) to Corollary \ref{cor}. The strategy is to keep track of the stronger $P \supset C$ condition through (\ref{Hitchin_moduli}) from the right end to the left end.

\medskip
\noindent {\bf Step 1.} Since all the moduli spaces of (\ref{Hitchin_moduli}) admit natural maps to $M^{\mathrm{mero}}$, we fix once and for all a normalized pair 
\[
(\BU, \delta) \rightsquigarrow \Sigma\times M^{\mathrm{mero}},
\]
whose pullback yields a normalized pair and the tautological classes $c_k(\gamma)$ for each moduli space.

By Corollary \ref{cor}, the stronger $P \supset C$ condition holds for $M^{\mathrm{parell}}$.

\medskip
\noindent {\bf Step 2.} The stronger $P \supset C$ condition holds for $M^0$.

This follows from Step 1 combined with the following facts:
\begin{enumerate}
    \item[(i)] the restriction map 
    \[
    \iota_0^*: H^*(M^{\mathrm{par}}, \BQ) \to H^*(M^0, \BQ)
    \]
    is a filtered isomorphism with respect to the perverse filteations and preserves tautological classes; see \cite[Proposition 8.18]{HMMS};
    \item[(ii)] the restriction map 
    \[
    \iota^*: H^*(M^{\mathrm{par}}, \BQ) \to H^*(M^{\mathrm{parell}}, \BQ)
    \]
    is a filtered injection with respect to the perverse filtrations and preserves tautological classes; see \cite[Proposition 8.16]{HMMS} and the paragraph after that.
\end{enumerate}

\medskip
\noindent {\bf Step 3.} The stronger $P\supset C$ condition holds for $M$, which completes the proof of (\ref{P=W_final}).

To see this, we consider the morphism
\[
\Gamma:= \widetilde{\iota}_* f^* : H^*(M, \BQ) \to H^*(M^0, \BQ).
\]
We note that this is only a correspondence which does not preserve cup-products. By \cite[Section 8.6, before Theorem 8.20]{HMMS} $\Gamma$ is injective with left inverse given by
\[
\Gamma' := f_*\widetilde{\iota}^*  : H^*(M^0, \BQ) \to H^*(M, \BQ)
\]
up to a nonzero constant.\footnote{In \cite[Section 8.6]{HMMS} the correspondences $\Gamma$ and $\Gamma'$ are called $B$ and $A$ respectively.}

Since the pullback of the tautological classes $c_k(\gamma)$ on $M$ and $M^0$ to $\widetilde{M}$ coincide, by the projection formula we have
\[
\Gamma\left(c_k(\gamma) \cup \alpha\right) = c_k(\gamma) \cup \Gamma(\alpha) \in H^*(M^0, \BQ)
\]
for any cohomology class $\alpha \in H^*(M, \BQ)$. Then the desired statement follows from Step 2 and the compatibility of $\Gamma, \Gamma'$ with the perverse filtrations \cite[Proposition 8.7]{HMMS}.

We have completed the proof of the $P=W$ conjecture. \qed

\end{document}